\UseAllTwocells \xyoption{frame} \CompileMatrices
\newtheorem{prop}{Proposition}
\newtheorem{lem}[prop]{Lemma}
\newtheorem{cor}[prop]{Corollary}
\newtheorem{thm}[prop]{Theorem}
\theoremstyle{definition}
\newtheorem{defn}[prop]{Definition}
\newtheorem{rmk}[prop]{Remark}
\newtheorem{notation}[prop]{Notation}
\newcommand{\bpsi}{\bar{\psi}}
\newcommand{\fb}{\mathbf{f}}
\newcommand{\bSigma}{\mathbf{\Sigma}}
\newcommand{\X}{\mathcal{X}}
\newcommand{\C}{\mathcal{C}}
\newcommand{\F}{\mathcal{F}}
\newcommand{\U}{\mathcal{U}}
\newcommand{\sL}{\mathcal{L}}
\newcommand{\bs}{\mathbf{s}}
\newcommand{\bc}{\mathbf{c}}
\newcommand{\bt}{\mathbf{t}}
\newcommand{\btau}{\boldsymbol{\tau}}
\newcommand{\bq}{\mathbf{q}}
\newcommand{\sH}{\mathcal{H}}
\newcommand{\sO}{\mathcal{O}}
\newcommand{\T}{\mathcal{T}}
\newcommand{\bp}{\mathbf{p}}
\def\<{\left\langle}
\def\>{\right\rangle}
\DeclareMathOperator{\Contr}{Contr}
\DeclareMathOperator{\val}{val}
\DeclareMathOperator{\inv}{inv}
\DeclareMathOperator{\ev}{ev}
\let\Box=\relax
\DeclareMathOperator{\Box}{Box} 
\newcommand{\Boxshape}{\mathchar"403}
\DeclareMathOperator{\Spec}{Spec}
\DeclareMathOperator{\Res}{Res}
\DeclareMathOperator{\res}{res}
\DeclareMathOperator{\lcm}{lcm}
\DeclareMathOperator{\Aut}{Aut}
\DeclareMathOperator{\ord}{ord}
\DeclareMathOperator{\NE}{NE}
\DeclareMathOperator{\Pic}{Pic}
\DeclareMathOperator{\Hom}{Hom}
\DeclareMathOperator{\im}{Im}
\DeclareMathOperator{\Ker}{Ker} 
\DeclareMathOperator{\can}{can}
\DeclareMathOperator{\Frac}{Frac} 
\DeclareMathOperator{\Cone}{Cone} 
\DeclareMathOperator{\Sym}{Sym} 
\DeclareMathOperator{\age}{age} 
\DeclareMathOperator{\Lie}{Lie} 
\DeclareMathOperator{\ch}{ch}
\newcommand{\eT}{e_{\TT}}
\newcommand{\cC}{\mathcal{C}}
\newcommand{\mov}{\text{\rm mov}}
\newcommand{\CR}{\text{\rm CR}}
\newcommand{\tor}{\text{\rm tor}}
\newcommand{\tw}{\text{\rm tw}}
\newcommand{\untw}{\text{\rm un}}
\newcommand{\GW}{\text{\rm GW}}
\newcommand{\Cstar}{\mathbb{C}^\times}
\newcommand{\Mbar}{\overline{\mathcal{M}}}
\newcommand{\CC}{\mathbb{C}}
\newcommand{\PP}{\mathbb{P}}
\newcommand{\QQ}{\mathbb{Q}}
\newcommand{\RR}{\mathbb{R}}
\newcommand{\ZZ}{\mathbb{Z}}
\newcommand{\TT}{\mathbb{T}}
\newcommand{\LL}{\mathbb{L}}
\newcommand{\RC}{\text{\rm \bf RC}}
\begin{document}

\title{A Mirror Theorem for Toric Stacks}

\author[Coates]{Tom Coates}
\email{t.coates@imperial.ac.uk}
\address{Department of Mathematics\\ Imperial College London\\ 
180 Queen's Gate\\ London SW7 2AZ\\ United Kingdom}

\author[Corti]{Alessio Corti}
\email{a.corti@imperial.ac.uk}
\address{Department of Mathematics\\ Imperial College London\\ 
180 Queen's Gate\\ London SW7 2AZ\\ United Kingdom}

\author[Iritani]{Hiroshi Iritani}
\email{iritani@math.kyoto-u.ac.jp}
\address{Department of Mathematics\\ Graduate School of Science\\ 
Kyoto University\\ Kitashirakawa-Oiwake-cho\\ Sakyo-ku\\ 
Kyoto\\ 606-8502\\ Japan}

\author[Tseng]{Hsian-Hua Tseng}
\email{hhtseng@math.ohio-state.edu}
\address{Department of Mathematics\\ Ohio State University\\ 
100 Math Tower, 231 West 18th Ave. \\ Columbus \\ OH 43210\\ USA}

\date{\today}

\subjclass[2010]{14N35 (Primary); 14A20, 14M25, 14J33, 53D45 (Secondary)}

\keywords{Gromov--Witten theory, toric Deligne--Mumford stacks, orbifolds, quantum cohomology, mirror symmetry, Givental's symplectic formalism, hypergeometric functions}

\begin{abstract}
  We prove a Givental-style mirror theorem for toric Deligne--Mumford
  stacks $\X$.  This determines the genus-zero Gromov--Witten
  invariants of $\X$ in terms of an explicit hypergeometric function,
  called the $I$-function, that takes values in the Chen--Ruan
  orbifold cohomology of $\X$.
\end{abstract}

\maketitle

\tableofcontents

\section{Introduction}
In this paper we prove a mirror theorem that determines the genus-zero
Gromov--Witten invariants of smooth toric Deligne--Mumford stacks.
Toric Deligne--Mumford stacks are generalizations of toric varieties
\cite{BCS}, and our mirror theorem generalizes Givental's mirror
theorem for toric manifolds \cite{Givental:toric}.  Following Givental
\cite{gi2}, the genus-zero Gromov--Witten theory of a toric
Deligne--Mumford stack $\X$ can be encoded in a Lagrangian cone
$\sL_\X$ contained in an infinite-dimensional symplectic vector space
$\sH$. Universal properties of Gromov--Witten invariants of $\X$
translate into geometric properties of $\sL_\X$. See
Section~\ref{sec:GW_basics} for an overview.  In
Sections~\ref{sec:toric_mir_thm}--\ref{sec:pf_mir_thm} of this paper
we establish a mirror theorem for a smooth toric Deligne--Mumford
stack $\X$. Roughly speaking our result states that the {\em extended
  $I$-function}, which is a hypergeometric function defined in terms
of the combinatorial data defining $\X$, lies on the Lagrangian cone
$\sL_\X$. The precise statement is Theorem~\ref{I_is_on_the_cone}
below.

Our mirror theorem (Theorem~\ref{I_is_on_the_cone}) has a number of applications.  It has been used to give explicit formulas for genus-zero Gromov--Witten invariants of toric Deligne--Mumford stacks and, when combined with the Quantum Lefschetz theorem \cite{cg,ccit}, to prove a mirror theorem for convex toric complete intersection stacks \cite{ccit2}.  Special cases of Theorem~\ref{I_is_on_the_cone} have been used (as conjectures, proven here) to construct an integral structure on quantum orbifold cohomology of toric Deligne--Mumford stacks, to study Ruan's Crepant Resolution Conjecture, to compute open-closed Gromov--Witten invariants \cite{ir, ChChLaTs, flt}, to prove mirror theorems for open Gromov--Witten invariants \cite{ChChLaTs2}, and to prove mirror theorems for certain toric complete intersection stacks \cite{Iritani}. Theorem~\ref{I_is_on_the_cone} will have further applications in the future: it allows a full proof of the Crepant Resolution Conjecture in the toric case, and a description of the quantum $D$-module of a toric Deligne--Mumford stack. We will discuss these applications elsewhere.  Theorem~\ref{I_is_on_the_cone} extends previous works on the Gromov--Witten theory of certain classes of toric stacks, including weighted projective spaces \cite{AGV2,cclt,Mann,gs,acvg}, one-dimensional toric Deligne--Mumford stacks \cite{Johnson, mt}, toric orbifolds of the form $[\CC^n/G]$ \cite{CC,BC,JPT,Brini--Cavalieri}, and the ambient space for the mirror quintic \cite{sl}.

Since our original announcement of Theorem~\ref{I_is_on_the_cone}, in
February 2007 \cite{IHP}, the Gromov--Witten theory of toric
Deligne--Mumford stacks has matured considerably, and the proof that
we give here relies heavily on two recent advances.  The first is the
beautiful characterization of the Lagrangian cone $\sL_X$ for a toric
variety (or toric bundle) $X$ in terms of recursion relations
\cite{brown}; we establish the analogous result for toric
Deligne--Mumford stacks in Section~\ref{sec:toric_lag_cone}.  The
second is Liu's virtual localization formula for toric
Deligne--Mumford stacks \cite[Theorem~9.32]{ccliu}; this is the
essential technical ingredient that allows us to characterize $\sL_\X$
for a toric Deligne--Mumford stack $\X$.

A significant generalization of our Theorem~\ref{I_is_on_the_cone} has
recently been announced by Ciocan-Fontanine--Kim
\cite{cfk,ccfk}. Also one major application of our theorem, the
calculation of the quantum cohomology ring of smooth toric Deligne--Mumford
stacks with projective coarse moduli space, has been achieved directly
by Gonzalez and Woodward, using the theory of gauged Gromov--Witten
invariants \cite{gw1,gw2,Woodward:1,Woodward:2,Woodward:3}.  We feel that it is nonetheless
worth presenting our argument here, in part because it is based on
fundamentally different ingredients (on Givental's recursive
characterization of $\sL_X$, rather than on the the theory of
quasimaps or gauged Gromov--Witten theory), in part because it gives
explicit mirror formulas that have important applications, and in part
to reduce our embarrassment at the long gap between our announcement
of the mirror theorem and its proof.

The rest of this paper is organized as
follows. Sections~\ref{sec:GW_basics} and~\ref{sec:toric_stacks}
contain reviews of Gromov--Witten theory and toric Deligne--Mumford
stacks. In Section~\ref{sec:extended_Picard} we introduce a notion of
extended Picard group for a Deligne--Mumford stack. Our mirror
theorem, Theorem~\ref{I_is_on_the_cone}, is stated in Section
\ref{sec:toric_mir_thm}. In Section~\ref{sec:toric_lag_cone} we
establish a criterion for points to lie on the Lagrangian cone
$\sL_\X$. In Section~\ref{sec:pf_mir_thm} we prove
Theorem~\ref{I_is_on_the_cone} by showing that the extended
$I$-function satisfies the criterion from
Section~\ref{sec:toric_lag_cone}.

\begin{acknowledgements}
  We benefited greatly from discussions with Alexander Givental, Melissa
  Liu, and Yongbin Ruan.  T.C.~was supported in part by a Royal Society
  University Research Fellowship, ERC Starting Investigator Grant
  number~240123, and the Leverhulme Trust.  A.C.~was supported in part
  by EPSRC grants EP/E022162/1 and EP/I008128/1.  H.I.~was supported in
  part by EPSRC grant EP/E022162/1 and JSPS Grant-in-Aid for Scientific
  Research (C)~25400069.  H.-H.T.~was supported in part by a Simons
  Foundation Collaboration Grant.
\end{acknowledgements}

\section{Gromov--Witten Theory}\label{sec:GW_basics}
Gromov--Witten theory for orbifold target spaces was first constructed
in symplectic geometry by Chen--Ruan \cite{CR2}. In algebraic
geometry, the construction was established by
Abramovich--Graber--Vistoli \cite{AGV1, AGV2}. In this Section, we
review the main ingredients of orbifold Gromov--Witten theory. We
mostly follow the presentation of \cite{ts}. More detailed discussions
of the basics of orbifold Gromov--Witten theory from the viewpoint of
Givental's formalism can be found in e.g.~\cite{ts}, \cite{cit}.

\subsection{Chen--Ruan Cohomology}
\label{sec:Chen--Ruan}
Let $\X$ be a smooth Deligne--Mumford stack equipped 
with an action of an algebraic torus $\TT$. 
Let $X$ denote the coarse moduli space of $\X$. 
The inertia stack of $\X$ is defined
as 
\[
I\X:=\X\times_{\Delta, \X\times \X, \Delta}\X
\] 
where $\Delta \colon \X\to \X\times \X$ is the diagonal morphism.  A
point on $I\X$ is given by a pair $(x,g)$ of a point $x\in \X$ and an
element $g\in \Aut(x)$ of the isotropy group at $x$.  As a module over
$R_\TT := H^\bullet_\TT({\rm pt}, \CC)$, the $\TT$-equivariant {\em
  Chen--Ruan orbifold cohomology} of $\X$ is defined to be the
$\TT$-equivariant cohomology of the inertia stack:
\[
H_{\CR, \TT}^\bullet(\X):=H^\bullet_{\TT}(I\X,\CC) 
\]
When $\TT$ is the trivial group, this is denoted by
$H_{\CR}^\bullet(\X)$.  The work \cite{CR1} equips
$H_{\CR,\TT}^\bullet(\X)$ with a grading called the age grading and a
product called the Chen--Ruan cup product. These are different from
the usual ones on $H^\bullet_{\TT}(I\X,\CC)$.  There is an involution
$\inv \colon I\X \to I\X$ given on points by $(x,g) \mapsto
(x,g^{-1})$.  When the $\TT$-fixed set $\X^\TT$ is proper, we can
define the orbifold Poincar\'{e} pairing
\[
(\alpha,\beta)_\CR := \int_{I\X}^\TT \alpha \cup \inv^\star \beta
\]
on $H_{\CR,\TT}^\bullet(\X)$ using the Atiyah--Bott localization
formula; the pairing takes values in the fraction field $S_\TT$ of
$R_\TT = H_{\TT}^\bullet ({\rm pt})$.

\subsection{Gromov--Witten Invariants and Gromov--Witten Potentials}
Gromov--Witten invariants are intersection numbers in moduli stacks of
stable maps. Let $\Mbar_{g,n}(\X, d)$ denote the moduli stack of
$n$-pointed genus-$g$ degree-$d$ orbifold stable maps to $\X$ with 
sections to gerbes at the markings, where $d\in H_2(X, \ZZ)$ 
(see \cite[Section 4.5]{AGV2}, \cite[Section 2.4]{ts}). 
There are evaluation maps at the marked points 
\begin{align*}
  \ev_i \colon \Mbar_{g,n}(\X, d)\to I\X && 1\leq i\leq n
\end{align*}
and, given $\vec{b} = (b(1),\ldots,b(n))$ where the $b(i)$ correspond to
components $(I\X)_{b(i)}$ of $I\X$, we set:
\begin{align*}
  \Mbar_{g,n}^{\vec{b}}(\X, d) = \bigcap_{i=1}^n
  \ev_i^{-1} (I\X)_{b(i)}
  && \text{so that} &&
  \Mbar_{g,n}(\X, d) = \bigcup_{\vec{b}} 
  \Mbar_{g,n}^{\vec{b}}(\X, d)
\end{align*}
Let $\bpsi_i\in H^2(\Mbar_{g,n}(\X, d),\QQ), 1\leq i\leq n$, 
denote the descendant classes \cite[Section 2.5.1]{ts}. 
Suppose that $\Mbar_{g,n}(\X,d)$ is proper. 
Then the moduli stack carries a weighted virtual fundamental class 
\cite{AGV2},~\cite[Section 2.5.1]{ts}: 
\[ 
[\Mbar_{g,n}(\X, d)]^w\in H_\bullet(\Mbar_{g,n}(\X, d),\QQ)
\] 
Given elements $a_1,\ldots,a_n\in H^\bullet_\CR(\X)$ 
and nonnegative integers $k_1,\ldots,k_n$, we define:
\begin{equation}
  \label{eq:GW_invariant}
  \langle
  a_1\bpsi^{k_1},\ldots,a_n\bpsi^{k_n}
  \rangle_{g,n,d}:=\int_{[\Mbar_{g,n}(\X,
    d)]^w}(\ev_1^\star a_1)\bpsi_1^{k_1}\ldots (\ev_n^\star a_n)\bpsi_n^{k_n}
\end{equation}
These are called the 
{\em descendant Gromov--Witten invariants} of $\X$. 

When $\X$ is equipped with an action of an algebraic torus $\TT$,
there is an induced $\TT$-action on the moduli space
$\Mbar_{g,n,d}(\X,d)$.  The descendant classes $\bpsi_i$ and the
virtual fundamental class have canonical $\TT$-equivariant lifts and
we can define $\TT$-equivariant Gromov--Witten invariants
\[
\langle
  a_1\bpsi^{k_1},\ldots,a_n\bpsi^{k_n}
  \rangle_{g,n,d}^{\TT} 
  =\int_{[\Mbar_{g,n}(\X, d)]^w}^\TT 
(\ev_1^\star a_1)\bpsi_1^{k_1}\ldots (\ev_n^\star a_n)\bpsi_n^{k_n}
\]
for $a_1,\dots, a_n \in H_{\CR,\TT}^\bullet(\X)$.  In this paper we
consider the case where the moduli space $\Mbar_{g,n}(\X,d)$ itself
may not be proper, but the $\TT$-fixed locus is proper. This happens
for toric stacks.  In this case, we define $\TT$-equivariant
descendant Gromov--Witten invariants by using the virtual localization
formula (see \cite{ccliu}); the invariants then take values in $S_\TT
= \Frac(R_\TT)$.

We package descendant Gromov--Witten invariants using generating
functions.  Let $\bt=\bt(z)=t_0+t_1z+t_2z^2+\cdots\in
H^\bullet_{\CR}(\X)[z]$. Define:
\[
\langle\bt,\ldots,\bt\rangle_{g,n,d}=
\langle\bt(\bpsi),\ldots,\bt(\bpsi)\rangle_{g,n,d}
:=\sum_{k_1,\ldots,k_n\geq 0}\langle t_{k_1}\bpsi^{k_1},
\ldots,t_{k_n}\bpsi^{k_n}\rangle_{g,n,d} 
\]
The \emph{genus-$g$ descendant potential} of $\X$ is:
\[
\mathcal{F}_\X^g(\bt):=\sum_{n=0}^\infty \sum_{d\in
  \NE(\X)}\frac{Q^{d}}{n!}\langle\bt,\ldots,\bt\rangle_{g,n,d}
\]
Here $Q^d$ is an element of the {Novikov ring} $\Lambda_{\rm nov} :=
\CC[\![\NE(X)\cap H_2(X,\ZZ)]\!]$ (see \cite[Definition 2.5.4]{ts}),
where $\NE(X) \subset H_2(X,\RR)$ denotes the cone generated by
effective curve classes in $X$.  Let us fix an additive basis
$\{\phi_\alpha\}$ for $H^\bullet_\CR(\X)$ consisting of homogeneous
elements, and write
\begin{align*}
  t_k=\sum_\alpha t_k^\alpha\phi_\alpha\in H^\bullet_\CR(\X)
  && k\geq 0.
\end{align*}
The generating function $\mathcal{F}_\X^g(\bt)$ is a
$\Lambda_{\rm nov}$-valued formal power series in the variables
$t_k^\alpha$. 

The definition readily extends to the $\TT$-equivariant setting. 
The $\TT$-equivariant descendant 
Gromov--Witten potential $\mathcal{F}^g_{\X,\TT}(\bt)$ 
is defined as a 
$\Lambda_{\rm nov}^\TT := S_\TT[\![\NE(X)\cap H_2(X,\ZZ)]\!]$-valued 
function of $\bt(z) \in H_{\CR,\TT}^\bullet(\X)[z]$. 
Choosing a homogeneous basis $\{\phi_\alpha\}$ 
of $H_{\CR,\TT}^\bullet(\X)\otimes_{R_\TT} S_\TT$ over $S_\TT$, 
we write $\bt(z) = \sum_{k\ge 0} 
t_k z^k = \sum_{k\ge 0} \sum_{\alpha} t_k^\alpha \phi_\alpha z^k$.

\subsection{Givental's Symplectic Formalism}
\label{sec:Givental_formalism} 
Next we describe Givental's symplectic formalism for genus-zero
Gromov--Witten theory \cite{gi,gi2}.  We present the $\TT$-equivariant
version, following the presentation in \cite[Section 3.1]{ts},
\cite{cit} and \cite{ccit} for the non-equivariant case.

Since our target space $\X$ is not necessarily proper, we work over
the field $S_\TT \cong \CC(\chi_1,\dots,\chi_d)$ of fractions of
$H_{\TT}^\bullet({\rm pt})$, where $\{\chi_1,\dots,\chi_d\}$ is a
basis of characters of the torus $\TT\cong (\CC^\times)^d$.  Recall
that the $\TT$-equivariant Novikov ring is
\begin{equation} 
\label{eq:Tequiv_Novikov}
\Lambda_{\rm nov}^\TT = S_\TT[\![ \NE(X) \cap H_2(X,\ZZ)]\!] 
\end{equation} 
Givental's symplectic vector space is the $\Lambda_{\rm
  nov}^\TT$-module
\[ 
\sH := H_{\CR,\TT}(\X) \otimes_{R_\TT} 
S_\TT(\!(z^{-1})\!)[\![\NE(X) \cap H_2(X,\ZZ)]\!] 
\]
equipped with the symplectic form: 
\begin{align*}
  \Omega(f,g) = - \Res_{z=\infty} \big(f(-z), g(z)\big)_{\CR} \, dz 
  && \text{for $f,g\in \sH$.} 
\end{align*}
The coefficient of $Q^d$ in an element of $\sH$ is a formal Laurent
series in $z^{-1}$, i.e.~a power series of the form
$\sum_{n=n_0}^\infty a_n z^{-n}$ for some $n_0\in \ZZ$.  The
symplectic form $\Omega$ is given by the coefficient of $z^{-1}$ of
the orbifold Poincar\'{e} pairing $(f(-z), g(z))_{\CR}$; the minus
sign reflects the fact that we take the residue at $z=\infty$ rather
than $z=0$.
Consider the polarization
\begin{equation*}
  \sH=\sH_+\oplus\sH_-
\end{equation*} 
where 
\begin{align*} 
\sH_+ & :=H^\bullet_{\CR,\TT}(\X)\otimes_{R_\TT}  
S_\TT[z][\![\NE(X)\cap H_2(X,\ZZ)]\!] \\ 
\sH_-& :=z^{-1}H^\bullet_{\CR,\TT}(\X)\otimes_{R_\TT} S_\TT[\![z^{-1}]\!] 
[\![\NE(X) \cap H_2(X,\ZZ)]\!] 
\end{align*} 
The subspaces $\sH_{\pm}$ are maximally isotropic with respect to 
$\Omega$, and the symplectic form $\Omega$ induces a 
non-degenerate pairing between $\sH_+$ and $\sH_-$. 
Thus we can regard $\sH=\sH_+\oplus \sH_-$ as the 
total space of the cotangent bundle $T^*\sH_+$ of $\sH_+$. 

Let $\{\phi^\mu\}\subset H^\bullet_{\CR,\TT}(\X)\otimes_{R_\TT} S_\TT$ 
be the $S_\TT$-basis dual to $\{\phi_\nu\}$ 
with respect to the orbifold Poincar\'e pairing, 
so that $(\phi^\mu,\phi_\nu)_{\CR}=\delta^\mu_\nu$. 
A general point in $\sH$ takes the form
\begin{equation}
\label{eq:general_point_sH}
\sum_{a=0}^\infty\sum_\mu p_{a,\mu} \phi^\mu(-z)^{-a-1} +
\sum_{b=0}^\infty\sum_\nu q_b^\nu\phi_\nu z^b
\end{equation} 
and this defines Darboux co-ordinates $\{p_{a,\mu},q_b^\nu\}$ on
$(\sH,\Omega)$ which are compatible with the polarization $\sH = \sH_+
\oplus \sH_-$.  Put $p_a=\sum_\mu p_{a,\mu} \phi^\mu$, 
$q_b=\sum_\nu q_b^\nu \phi_\nu$, and denote:
\begin{equation*}
\begin{split}
&\bp=\bp(z):=\sum_{k=0}^\infty p_k(-z)^{-k-1}=
 p_0(-z)^{-1}+p_1(-z)^{-2}+\cdots\\
&\bq=\bq(z):=\sum_{k=0}^\infty q_kz^k=q_0+q_1z+q_2z^2+\cdots
\end{split}
\end{equation*}
We relate the co-ordinates $\bq$ on $\sH_+$ to the variables 
$\bt$ of the descendant potential $\mathcal{F}^g_\X(\bt)$ by 
$\bq(z) = \bt(z) - 1 z$; this identification is called the \emph{dilaton shift} 
\cite{gi}. 

The genus-zero descendant potential $\F_{\X,\TT}^0$ defines a formal germ of
a Lagrangian submanifold
\[
\sL_\X:=\big\{(\bp,\bq) \in \sH_+ \oplus \sH_- 
: \bp=d_\bq\F_{\X,\TT}^0\big\}\subset T^*\sH_+ \cong \sH 
\]
given by the graph of the differential of $\F_{\X,\TT}^0$. 
The submanifold $\sL_\X$ may be viewed as a formal subscheme 
of the formal neighbourhood of $-1z$ in $\sH$ cut out by the equations 
\[
p_{a,\mu}=\frac{\partial
  \F_{\X,\TT}^0}{\partial q_a^\mu}
\] 
Let $x=(x_1,\dots,x_m)$ be formal variables. Instead of giving a
rigorous definition of $\sL_\X$ as a formal scheme
(cf.~\cite[Appendix~B]{ccit}) we define the notion of a
$\Lambda_{\rm nov}^\TT[\![x]\!]$-valued point on $\sL_X$.  By a
\emph{$\Lambda_{\rm nov}^\TT[\![x]\!]$-valued point} of $\sL_\X$, we
mean an element of $\sH[\![x]\!]$ of the form
\begin{equation}
  \label{eq:very_big_J_function}
  - 1 z+ \bt(z)+
  \sum_{n=0}^\infty
  \sum_{d\in\NE(X)}
  \sum_{\alpha}
  \frac{Q^d}{n!}
  \<\bt(\bpsi),\ldots,\bt(\bpsi),\frac{\phi_\alpha}{-z-\bpsi}\>_{0,n+1,d}^\TT 
  \phi^\alpha
\end{equation}
for some $\bt(z)\in \sH_+[\![ x ]\!]$ satisfying 
\begin{equation} 
\label{eq:hot} 
\bt|_{x=Q=0} = 0 
\end{equation} 
Here the expression $\phi_\alpha/(-z-\bpsi)$ should be expanded as a
power series $\sum_{n=0}^\infty (-z)^{-n-1} \phi_\alpha \bpsi^n$ in
$z^{-1}$.  The condition \eqref{eq:hot} ensures that the expression
\eqref{eq:very_big_J_function} converges in the $(Q,x)$-adic topology.

\begin{rmk} 
\label{rmk:rational_Givental} 
As we shall see in Section~\ref{sec:toric_lag_cone}, using
localization in $\TT$-equivariant cohomology, the expression
\eqref{eq:very_big_J_function} lies in a rational version of
Givental's symplectic space:
\[
\sH_{\rm rat} := H_{\CR,\TT}^\bullet(\X) \otimes_{R_\TT} 
S_{\TT\times \CC^\times}[\![\NE(X) \cap H_2(X,\ZZ)]\!]  
\] 
where $S_{\TT \times \CC^\times} = 
\Frac(H_{\TT\times \CC^\times}^\bullet({\rm pt})) 
\cong \CC(\chi_1,\dots,\chi_d,z)$ and $z$ is identified 
with the $\CC^\times$-equivariant parameter. 
The space $\sH_{\rm rat}$ is embedded into $\sH$ by the 
Laurent expansion at $z=\infty$.  
This fact plays an important role in the characterization of points on 
$\sL_\X$ in Section~\ref{sec:toric_lag_cone}. 
\end{rmk} 

The Lagrangian submanifold $\sL_\X$ has very special 
geometric properties. 

\begin{thm}[\cite{gi2}, \cite{ccit}, \cite{ts}]
\label{thm:lag_cone}
  $\sL_\X$ is the formal germ of a Lagrangian cone with vertex at the
  origin such that each tangent space $T$ to the cone is tangent to
  the cone exactly along $zT$.
\end{thm}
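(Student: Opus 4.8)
The plan is to establish the three assertions of Theorem~\ref{thm:lag_cone} — that $\sL_\X$ is a cone, that it is Lagrangian, and that each tangent space $T$ is tangent to $\sL_\X$ precisely along $zT$ — by extracting each from a corresponding universal identity satisfied by genus-zero descendant Gromov--Witten invariants. The cone property is the dilaton equation; the Lagrangian property follows from $\sL_\X$ being (a formal germ of) the graph of an exact one-form, namely $d\F^0_{\X,\TT}$; and the delicate tangency statement encodes the topological recursion relations together with the string equation. Since all three input identities (dilaton, string, TRR) hold in the orbifold $\TT$-equivariant setting after the usual modifications — see \cite{AGV2}, \cite{ts} — the argument is essentially formal manipulation of generating series, and we would simply transcribe the proof of \cite[Theorem~1]{ccit} (equivalently \cite{gi2}, \cite{ts}) to our situation, noting that working over $S_\TT$ rather than $\CC$ and over the Novikov ring $\Lambda_{\rm nov}^\TT$ causes no difficulty.

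In more detail, first I would recall that $\sL_\X = \{(\bp,\bq): \bp = d_\bq\F^0_{\X,\TT}\}$ is by construction the graph of a differential, hence a formal germ of a Lagrangian submanifold of $(\sH,\Omega)$ viewed as $T^*\sH_+$; this gives "Lagrangian" for free. Next, for the cone property, the dilaton equation
\[
\<\bone,\bt(\bpsi),\ldots,\bt(\bpsi)\>_{0,n+1,d}^\TT
= \bigl(2g-2+n\bigr)\<\bt(\bpsi),\ldots,\bt(\bpsi)\>_{0,n,d}^\TT
\Big|_{g=0}
\]
translates, after the dilaton shift $\bq(z) = \bt(z) - \bone z$, into the Euler-homogeneity of $\F^0_{\X,\TT}$ of degree $2$ in $\bq$, and hence into the statement that $\sL_\X$ is invariant under scaling; combined with the explicit shape \eqref{eq:very_big_J_function} of a general point this shows $\sL_\X$ is a formal germ of a cone with vertex at the origin. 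Finally, for the tangency statement, one computes the tangent space $T = T_{(\bp,\bq)}\sL_\X$ at a point $\bt$ of the form \eqref{eq:very_big_J_function}; using the string equation and the topological recursion relations one shows that $T$ is a free module over $S_\TT[\![z]\!]$ (in the appropriate completed sense) and that, as a subspace of $\sH$, it satisfies $zT \subset T$, with the point $(\bp,\bq)$ itself lying in $zT$; a standard argument then upgrades "$zT\subset T$ and $\sL_\X \cdot z \subset$ tangent data" to the precise claim that the locus of points of $\sL_\X$ with tangent space $T$ is exactly $zT$ (an open dense formal subscheme of $zT$).

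The main obstacle — or rather the point requiring the most care — is the tangency statement, since it is the one genuinely using the recursion relations and the only one where the infinite-dimensionality and the formal/rational subtleties of Remark~\ref{rmk:rational_Givental} intervene. Concretely, one must verify that the topological recursion relations, which on $\Mbar_{0,n}$ come from the fact that $\bpsi_1 = \sum D_{I}$ is a sum of boundary divisors, continue to hold verbatim for orbifold stable maps with sections to gerbes and weighted virtual classes; this is where a reference to \cite[Section~2.5]{ts} or \cite{AGV2} is essential. Once that is in hand, the differentiation of \eqref{eq:very_big_J_function} in the directions $\partial/\partial t_k^\alpha$ and the resulting identification of $T$ and $zT$ is a bookkeeping exercise identical to the non-equivariant, non-orbifold case, and we would not reproduce it in full but refer the reader to \cite{ccit}, \cite{ts} for the details, indicating only the modifications needed to accommodate the $\TT$-action and the coarse-moduli Novikov ring.
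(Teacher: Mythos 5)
The paper does not prove Theorem~\ref{thm:lag_cone}: it quotes it from \cite{gi2}, \cite{ccit}, \cite{ts} and merely records that the cone properties are equivalent to the string equation, dilaton equation, and topological recursion relations, which is exactly the decomposition your sketch follows, so your approach matches the (cited) proof. One small slip: the displayed identity you call the dilaton equation should have the insertion $\bone\bpsi$ rather than $\bone$ (with $\bone$ alone it is the string-equation insertion, and the stated right-hand side is then wrong), though your stated consequence --- Euler homogeneity of degree $2$ of $\F^0_{\X,\TT}$ in $\bq$ after the dilaton shift --- is the correct one.
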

In other words, if $N$ is a formal neighborhood in $\sH$ of $-1 z\in
\sL_\X$, then we have the following statements:
\begin{equation}\label{overruled}
\begin{split}
    &\text{(a) } T \cap \sL_\X = zT \cap N ;\\
    &\text{(b) } \text{for each } \fb \in zT \cap N , 
\text{ the tangent space to } \sL_\X \text{ at } \fb \text{ is } T ;\\
    &\text{(c) } \text{if } T = T_\fb \sL_\X \text{ then } \fb \in zT\cap N .
\end{split}
\end{equation}
Givental has proven that these statements are equivalent to the string
equation, dilaton equation, and topological recursion relations
\cite[Theorem~1]{gi2}.  The statements \eqref{overruled} imply that:
\begin{itemize}
\item the tangent spaces $T$ of $\sL_\X$ are closed under
  multiplication by $z$;
\item $\sL_\X$ is the union of the (finite-dimensional) family of
  linear subspace-germs
  \[
  \{zT\cap N : T \text{ is a tangent space of } \sL_\X \}
  \]
\end{itemize}

\begin{rmk}
\label{rmk:jfunction} 
A finite-dimensional slice of the Lagrangian submanifold $\sL_\X$ is
given by the so-called $J$-function {\cite{gi2}, \cite[Definition
  3.1.2]{ts}}
  \[
  J_\X(t,z)=1 z+t+
  \sum_{n=0}^\infty
  \sum_{d\in\NE(\X)}
  \sum_{\alpha}
  \frac{Q^d}{n!}
  \<t,\ldots,t,\frac{\phi_\alpha}{z-\bpsi}\>_{0,n+1,d}
  \phi^\alpha
  \]
which is a formal power series in co-ordinates $t^\alpha$ of 
 $t = \sum_\alpha t^\alpha \phi_\alpha \in H^\bullet_{\CR,\TT}(\X) 
 \otimes_{R_\TT}S_\TT$ taking values in $\sH$. 
The $J$-function $J_\X(t,-z)$ gives a 
$\Lambda_{\rm nov}^\TT[\![t]\!]$-valued point of the 
Lagrangian submanifold $\sL_\X$. 
\end{rmk}

\subsection{Twisted Gromov--Witten Invariants} 
\label{sec:twisted}
We will need also to consider Gromov--Witten invariants twisted by the
$\TT$-equivariant inverse Euler class \cite{cg,ts}.  In this section
we assume that the torus $\TT$ acts on the target space $\X$
trivially.  This is sufficient for our purposes, as in
Section~\ref{sec:toric_lag_cone} we consider twisted Gromov--Witten
theory for a $\TT$-fixed point of a toric stack.  Givental's
symplectic formalism for the twisted theory has a subtle but important
difference from that in the previous section: we need to work with
formal Laurent series in $z$ rather than $z^{-1}$.

Let $E \to \X$ be a vector bundle equipped with a $\TT$-linearization;
as mentioned above, $\TT$ here acts trivially on the base $\X$.
Consider the virtual vector bundle $E_{g,n,d} = R\pi_\star \ev^\star E
\in K^0_\TT(\Mbar_{g,n}(\X, d))$ where $\pi \colon \C_{g,n,d} \to
\Mbar_{g,n}(\X,d)$ and $\ev \colon \C_{g,n,d} \to \X$ give the
universal family of stable maps:
\[
\xymatrix{
  \C_{g,n,d} \ar[r]^-\ev \ar[d]_-\pi  & \X \\ 
  \Mbar_{g,n}(\X,d) 
}
\]
Let $\eT^{-1}(\cdot)$ denote the inverse 
of the $\TT$-equivariant Euler class. 
Twisted Gromov--Witten invariants 
\[
\langle a_1\bpsi^{k_1},\ldots,a_n\bpsi^{k_n} \rangle_{g,n,d}^{\eT^{-1},E}
\]
are defined by replacing the weighted virtual fundamental class
$[\Mbar_{g,n}(\X, d)]^w$ in \eqref{eq:GW_invariant}
by $[\Mbar_{g,n}(\X, d)]^w \cap
\eT^{-1}(E_{g,n,d})$.  
The \emph{twisted genus-$g$ descendant potential} is:
\[
\mathcal{F}_{\eT^{-1},E}^g(\bt):=\sum_{n=0}^\infty \sum_{d\in
  \NE(\X)}\frac{Q^{d}}{n!}\langle\bt,\ldots,\bt\rangle_{g,n,d}^{\eT^{-1},E}
\]
In the twisted theory, we work with the twisted orbifold Poincar\'{e}
pairing
\[
\big(\alpha,\beta\big)_\CR^{\eT^{-1},E} := \int_{I\X} \alpha \cup \inv^\star
\beta \cup \eT^{-1}(IE)
\]
where $IE$ is the inertia stack of the total space of $E$; $IE$ is a
vector bundle over $I\X$ such that the fibre over $(x,g) \in I\X$ is
the $g$-fixed subspace of $E_x$.  Givental's symplectic vector space
for twisted theory is the $\Lambda_{\rm nov}^\TT$-module
\[
\sH^\tw = H^\bullet_{\CR}(\X) \otimes S_\TT(\!(z)\!) 
[\![\NE(X) \cap H_2(X,\ZZ)]\!] 
\]
equipped with the symplectic form:
\[
\Omega(f,g) = \Res_{z=0} \big(f(-z), g(z)\big)_\CR^{\eT^{-1},E} dz 
\]
The polarization $\sH^\tw = \sH^\tw_+ \oplus \sH^\tw_-$ of $\sH^\tw$ 
is given by 
\begin{align*} 
\sH^\tw_+ & = H^\bullet_{\CR}(\X) \otimes S_\TT[\![z]\!] 
[\![ \NE(X) \cap H_2(X,\ZZ)]\!] \\ 
\sH^\tw_- & = H^\bullet_{\CR}(\X) \otimes S_\TT[z^{-1}]
[\![ \NE(X) \cap H_2(X,\ZZ)]\!] 
\end{align*} 

Let $\{\phi_\mu\}$, $\{\phi^\mu\}$ be dual bases of
$H_{\CR}^\bullet(\X)\otimes S_\TT$ with respect to the twisted
orbifold Poincar\'{e} pairing.  They define Darboux co-ordinates
$\{p_{a,\mu}, q_a^\mu \}$ on $\sH^\tw$ as in
\eqref{eq:general_point_sH}.  The Lagrangian submanifold $\sL_\tw$ of
the twisted theory is then defined similarly: a $\Lambda_{\rm
  nov}^\TT[\![x]\!]$-valued point of $\sL_\tw$ is an element of
$\sH^\tw[\![x]\!]$ of the form
\begin{equation}
  \label{eq:very_big_J_function_twisted}
  - 1 z+ \bt(z)+
  \sum_{n=0}^\infty
  \sum_{d\in\NE(X)}
  \sum_{\alpha}
  \frac{Q^d}{n!}
  \<\bt(\bpsi),\ldots,\bt(\bpsi),\frac{\phi_\alpha}{-z-\bpsi}
\>_{0,n+1,d}^{\eT^{-1},E}  
  \phi^\alpha
\end{equation}
for some $\bt(z)\in \sH_+^\tw[\![ x ]\!]$ satisfying $\bt|_{x=Q=0} =
0$.  Note that the expression \eqref{eq:very_big_J_function_twisted}
makes sense as an element of $\sH^\tw[\![x]\!]$.  We use here the fact
that, as $\TT$ acts trivially on $\X$, the descendant classes
$\bpsi_i$ are nilpotent on each moduli space $\Mbar_{0,n}(\X,d)$;
therefore $\bt(\bpsi) = \sum_{k=0}^\infty t_k \bpsi^k$ and
$\phi_\alpha/(-z-\bpsi) = \sum_{n=0}^\infty \phi_\alpha \bpsi^n
(-z)^{-n-1}$ truncate to finite series on each moduli space
$\Mbar_{0,n}(\X,d)$.

\begin{rmk}
The analogue of Theorem~\ref{thm:lag_cone} holds for $\sL_\tw$.  
\end{rmk}

\section{Toric Deligne--Mumford Stacks}\label{sec:toric_stacks}
In this Section we discuss some background material on toric stacks.
More details can be found in \cite{BCS, Iwanari1, Iwanari2, FMN}.

\subsection{Basics}\label{sec:toric_basics}
Following Borisov--Chen--Smith \cite{BCS}, a {\em toric
  Deligne--Mumford stack} is defined in terms of a stacky
fan 
\[
\bSigma=(N,\Sigma,\rho)
\]
where $N$ is a finitely generated abelian group, $\Sigma\subset
N_\QQ=N\otimes_{\ZZ}\RR$ is a rational simplicial fan, and $\rho
\colon \ZZ^{n} \to N$ is a homomorphism.  We denote by $\rho_i$ the
image under $\rho$ of the $i$th standard basis vector $e_i$ of
$\ZZ^n$.  Let $\LL\subset \ZZ^n$ be the kernel of $\rho$. The exact
sequence
\[
\xymatrix{
  0 \ar[r] & \LL \ar[r] & \ZZ^n \ar[r]^\rho &  N 
}
\]
is called the {\em fan sequence}.  By assumption, $\rho$ has finite
cokernel and the images $\bar{\rho}_i$, $1 \leq i \leq n$, of the
$\rho_{i}$s under the canonical map $N\to N_\QQ$ generate 
1-dimensional cones of the 
simplicial fan $\Sigma$.

By abuse of notation, we sometimes identify a cone $\sigma \in \Sigma$
with the subset $\{i :\bar{\rho}_i \in\sigma\}$ of $\{1,\dots,n\}$ and
write $i\in \sigma$ instead of $\bar{\rho}_i \in \sigma$.  The set of
{\em anti-cones} is defined to be
\begin{equation*}
\mathcal{A}:=\left\{I\subset \{1,\ldots,n\} : 
\sum_{i\notin I}\RR_{\geq 0} \bar{\rho}_i \text{ is a cone in } \Sigma \right\}.
\end{equation*}
Let 
\[
\U_\mathcal{A}:=\CC^n\setminus \bigcup_{I\notin \mathcal{A}}\CC^I
\]
where $\CC^I\subset \CC^n$ is the subvariety determined by the ideal
in $\CC[Z_1,\ldots,Z_n]$ generated by $\{Z_i : i\notin I\}$.  
Let ${\rho}^{\vee} \colon (\ZZ^*)^{n}\to \LL^{\vee}$ 
be the Gale dual of $\rho$ \cite{BCS}. 
Here $\LL^\vee :=H^1(\Cone(\rho)^*)$ is an extension of the ordinary 
dual $\LL^*=\Hom(\LL,\ZZ)$ by a torsion subgroup.  
We have $\Ker (\rho^\vee)=N^*$. 
The exact sequence 
\begin{equation} 
\label{eq:div_seq}
\xymatrix{
  0 \ar[r] & N^* \ar[r] & (\ZZ^*)^n \ar[r]^-{\rho^\vee} & \LL^\vee
}\end{equation} 
is called the {\em divisor sequence}.

Applying $\Hom_{\ZZ}(-,\Cstar)$ to $\rho^{\vee}$ gives a map 
\begin{equation}\label{eqn:torus}
  \alpha\colon G \to (\Cstar)^{n}.
\end{equation}
where $G :=\Hom_\ZZ(\LL^{\vee}, \Cstar)$. The toric
Deligne--Mumford stack $\X(\bSigma)$ associated to
$\bSigma$ is defined to be the quotient stack
\[
\X(\bSigma):=[\U_{\mathcal{A}}/G]
\]
where $G$ acts on $\U_{\mathcal{A}}$ via $\alpha$.

Throughout this paper we assume that the toric Deligne--Mumford stack
$\X(\bSigma)$ has {\em semi-projective} coarse moduli space, i.e.~that
the coarse moduli space $X(\Sigma)$ is a toric variety that has at
least one torus-fixed point, such that the natural map $X(\Sigma)\to
\Spec H^0\big(X(\Sigma), \mathcal{O}_{X(\Sigma)}\big)$ is projective.
In terms of the fan $\Sigma$, this is equivalent \cite{cls} to
demanding that the support $|\Sigma|$ of the fan $\Sigma$ is
full-dimensional and convex, and that there exists a strictly convex piecewise
linear function $\phi \colon |\Sigma| \to \RR$.

Let $N_{\tor}$ denote the torsion subgroup of $N$, and set
$\overline{N} := N/N_\tor$. For $c\in N$ we denote by $\overline{c}\in
\overline{N}$ the image of $c$ under the natural projection $N\to
\overline{N}$.  Given a stacky fan $\bSigma=(N,\Sigma,\rho)$, one can
consider the set $\Box$ defined as follows.  For a cone $\sigma\in
\Sigma$, define
\[
\Box(\sigma):=\left\{b\in N : \text{$\bar{b}=\sum_{i \in \sigma}
  a_i\bar{\rho}_i$ for some $a_i$ with $0\leq a_i< 1$} \right\}
\]
and set $\Box(\bSigma):=\bigcup_{\sigma\in \Sigma}
\Box(\sigma)$. 
Components of the inertia stack $I \X(\bSigma)$
are indexed by $\Box$; we write $I\X(\bSigma)_b$ for the component 
corresponding to $b\in \Box$.  
The involution $\inv$ on $I\X(\bSigma)$ 
induces an involution $b \mapsto \hat{b}$ on $\Box(\bSigma)$. 

Each cone $\sigma \in \Sigma$ defines a closed toric 
substack $\X(\bSigma)_\sigma \cong \X(\bSigma/\sigma)$, 
where $\bSigma/\sigma$ 
denotes the quotient stacky fan \cite[\S 4]{BCS} 
defined on the quotient space $N(\sigma) = 
N/\sum_{i\in \sigma} \ZZ \rho_i$. 
The component $I\X(\bSigma)_b$ of the inertia stack corresponding to 
$b\in \Box(\bSigma)$ is isomorphic to the toric substack 
$\X(\bSigma)_{\sigma(b)}$, 
where $\sigma(b)$ is the minimal cone containing $\bar{b}$.

\subsection{Extended Stacky Fans}

Following Jiang \cite{Jiang}, toric Deligne--Mumford stacks can also
be described using extended stacky fans. Let $\bSigma=(N,\Sigma,\rho)$
be a stacky fan, and let $S$ be a finite set equipped with a
map\footnote{The reader can keep in mind the most basic case where $S$
  is a subset of $\Box(\bSigma)$.}  $S \to N_\Sigma := \{ c\in N :
\bar{c} \in |\Sigma|\}$.  We label the finite set $S$ by
$\{1,\dots,m\}$, where $m=|S|$, and write $s_j\in N$ for the image of
the $j$th element of $S$.  The {\em $S$-extended stacky fan} is given
by the same group $N$, the same fan $\Sigma$, and the fan map
$\rho^S\colon\ZZ^{n+m}\to N$ defined by:
\begin{equation*}
\rho^S(e_i) = 
\begin{cases}
  \rho_i &  1\leq i\leq n \\
  s_{i-n} & n +1 \le  i \leq n+m
\end{cases}
\end{equation*}
Given an $S$-extended stacky fan $(N, \Sigma, \rho^S)$, an associated
stack may be defined as follows. 
Define 
\[
\U_{\mathcal{A},S} := \U_{\mathcal{A}}  \times (\CC^\times)^m. 
\]
Let $\LL^S$ be the kernel of
$\rho^S \colon \ZZ^{n+m}\to N$. 
Applying Gale duality to the $S$-extended fan sequence 
$0\to \LL^S\to \ZZ^{n+m}\to N$
gives the $S$-extended divisor sequence:
\[
\xymatrix{
  0 \ar[r] & N^* \ar[r] & (\ZZ^*)^{n+m} \ar[r]^-{\rho^{S\vee}} & \LL^{S\vee}
}
\]
Applying $\Hom_\ZZ(-,\Cstar)$ to the $S$-extended divisor
sequence gives a map $\alpha^S \colon G^S \to (\Cstar)^{n+m}$ where
$G^S :=\Hom_\ZZ(\LL^{S\vee}, \Cstar)$. We consider the
quotient stack
\begin{equation} 
\label{eq:extended_quotient} 
\big[\U_{\mathcal{A},S}/G^S\big]
\end{equation} 
where $G^S$ acts on $\U_{\mathcal{A},S}$ via $\alpha^S$. 
Jiang showed \cite{Jiang} that this stack associated to 
the $S$-extended stacky fan 
$(N, \Sigma, \rho^S)$ is isomorphic to the stack
$\X(\bSigma)$.  

\subsection{Torus Action and Line Bundles} 
\label{sec:equiv_obj} 
The inclusion $(\CC^\times)^n \subset \U_{\mathcal{A}}$ induces an
open embedding of the Picard stack $\T = [(\CC^\times)^n/G]$ into
$\X(\bSigma)$.  We have $\T \cong \TT \times BN_{\rm tor}$ with $\TT
:= (\CC^\times)^n/\im \alpha \cong N\otimes \CC^\times$ and $N_{\rm
  tor} \cong \Ker \alpha$, where $\alpha$ is given in
\eqref{eqn:torus}.  The Picard stack $\T$ acts naturally on
$\X(\bSigma)$ and the $\T$-action restricts to the $\TT$-action on
$\X(\bSigma)$.

A line bundle on $\X(\bSigma)$ corresponds to a $G$-equivariant 
line bundle on $\U_{\mathcal{A}}$, 
and a $\T$-equivariant line bundle on $\X(\bSigma)$ corresponds to 
a $(\CC^\times)^n$-equivariant line bundle on $\U_{\mathcal{A}}$. 
Thus we have natural identifications: 
\begin{align*} 
\Pic(\X(\bSigma)) & \cong \Hom(G,\CC^\times) \cong \LL^\vee \\ 
\Pic_{\T}(\X(\bSigma)) & \cong \Hom((\CC^\times)^n, \CC^\times) 
\cong (\ZZ^n)^*
\end{align*} 
The natural map $\Pic_{\T}(\X(\bSigma)) \to \Pic(\X(\bSigma))$ is
identified with the divisor map $\rho^\vee \colon (\ZZ^n)^* \to
\LL^\vee$ in \eqref{eq:div_seq}.  We write $u_1,\dots,u_n$ for the
basis of $\T$-equivariant line bundles on $\X(\bSigma)$ corresponding
to the standard basis of $(\ZZ^n)^*$ and write $D_1,\dots,D_n$ for the
corresponding non-equivariant line bundles, i.e.~$D_i =
\rho^\vee(u_i)$.  Abusing notation, we also write $u_i$ or $D_i$ for
the corresponding ($\TT$-equivariant or non-equivariant) first Chern
classes. These are the ($\TT$-equivariant or non-equivariant)
Poincar\'{e} duals of the toric divisors $[\{Z_i=0\}/G] \subset
[\U_{\mathcal{A}}/G]$.

\subsection{Chen--Ruan Cohomology}
\label{sec:CR_coh}
The Chen--Ruan orbifold cohomology (see Section~\ref{sec:Chen--Ruan})
of the toric Deligne--Mumford stack $\X(\bSigma)$ associated
to a stacky fan $\bSigma=(N,\Sigma,\rho)$ has been computed
by Borisov--Chen--Smith \cite{BCS} and, in the semi-projective case,
by Jiang--Tseng \cite{JT}:
\begin{equation*}
H^\bullet_{\CR}(\X(\bSigma), \CC)\simeq \frac{\CC[N_{\Sigma}]}
{\left\{\sum_{i=1}^n \chi(\rho_i)y^{\rho_i} : \chi \in N^*\right\}}
\end{equation*}
where:
\begin{enumerate}
\item
$\CC[N_{\Sigma}]:=\bigoplus_{c\in N_\Sigma} \CC y^{c}$ with the product:
\[ 
y^{c_1}\cdot y^{c_2}:= 
\begin{cases}
  y^{c_1+c_2} & \text{if there is a cone $\sigma\in \Sigma$ 
such that $\overline{c_1}$,~$\overline{c_2}\in \sigma$;}\\
  0 & \text{otherwise.}
\end{cases}
\] 
\item 
$N_\Sigma:=\{c\in N : 
\text{ $\overline{c}\in \sigma $ for some $\sigma \in \Sigma$ }\}$
\end{enumerate}
Similarly, the $\TT$-equivariant Chen--Ruan orbifold cohomology
of the toric Deligne--Mumford stack $\X(\bSigma)$ is \cite{ccliu}:
\begin{equation*}
H^\bullet_{\CR, \TT}(\X(\bSigma), \CC)\simeq 
\frac{R_{\TT}[N_{\Sigma}]}{
\left\{\chi - \sum_{i=1}^n \chi(\rho_i)y^{\rho_i} : 
\chi \in N^* \otimes \CC \cong H^2_{\TT}({\rm pt}) \right \}},
\end{equation*}
where:
\begin{enumerate}
\item 
$R_{\TT}:=H_{\TT}^\bullet(\text{pt}) = 
\Sym^\bullet_\CC(N^*\otimes \CC)$; 
\item
$R_{\TT}[N_{\Sigma}]:=\bigoplus_{c\in N_\Sigma} R_{\TT} y^{c}$ 
with the product:
\[
y^{c_1}\cdot y^{c_2}:= 
\begin{cases}
y^{c_1+c_2} & \text{if there is a cone $\sigma\in \Sigma$ 
such that $\overline{c_1}$,~$\overline{c_2}\in \sigma$;}\\ 
0 & \text{otherwise.}
\end{cases}
\]
\end{enumerate}
The ($\TT$-equivariant or non-equivariant) classes $u_i$, $D_i$ 
in Section~\ref{sec:equiv_obj} 
correspond to $y^{\rho_i}$ in the above descriptions. 
For $b\in \Box(\bSigma)$, $y^b$ is the identity class supported 
on the twisted sector $I\X(\bSigma)_b$. 

\subsection{Maps to $1$-Dimensional Torus Orbits}

We next describe toric maps from certain very simple toric orbifolds
$\PP_{r_1,r_2}$ to the toric Deligne--Mumford stack
$\X(\bSigma)$.  This establishes notation that we will need to
state and prove our mirror theorem.

\begin{defn}
  Let $r_1$ and $r_2$ be positive integers. There is a unique
  Deligne--Mumford stack with coarse moduli space equal to $\PP^1$,
  isotropy group $\mu_{r_1}$ at $0 \in \PP^1$, isotropy group
  $\mu_{r_2}$ at $\infty \in \PP^1$, and no other non-trivial isotropy
  groups.  We call this stack $\PP_{r_1,r_2}$.
\end{defn}

Let $r = \lcm(r_1,r_2)$, and let $r_1'$ and $r_2'$ satisfy $r_1 r_2' =
r_1' r_2 = r$ (i.e.~$r_i' = r_i/\gcd(r_1,r_2)$). 
The stack $\PP_{r_1,r_2}$ is a toric Deligne--Mumford stack with fan sequence:
\[
\xymatrix{
  0 \ar[r] & \ZZ \ar[rr]^
  {\begin{pmatrix}
      r_2^\prime\\
      r_1^\prime
    \end{pmatrix}
  } &&
  \ZZ^2\ar[rr]^
  {
    \begin{pmatrix}
      -r_1,r_2
    \end{pmatrix}
  } && \ZZ
}
\]

\begin{prop}[{cf.~\cite[Lemma~2]{Johnson}}]
  \label{prop:one-dimensional}
  Let $\X(\bSigma)$ be the toric Deligne--Mumford stack associated to a stacky
  fan $\bSigma=(N,\Sigma,\rho)$, and suppose that the fan
  $\Sigma$ is complete and $1$-dimensional.  
Let $\sigma_1 = \langle \bar{\rho}_1 \rangle$, 
$\sigma_2 = \langle \bar{\rho}_2 \rangle$ be
  the 1-dimensional cones of $\Sigma$, and assume without loss of
  generality that $\bar{\rho}_1 < 0$ and $\bar{\rho}_2 > 0$ 
in $N_\QQ\simeq \QQ$. 
Let $w_2 \rho_1 + w_1 \rho_2 =0$ with $w_1,w_2 \in \ZZ_{>0}$ 
be the minimal integral relation between $\rho_1,\rho_2 \in N$. 
The following are equivalent:
  \begin{itemize}
  \item[(a)] A representable toric morphism $f\colon \PP_{r_1, r_2}\to \X(\bSigma)$
    for some $r_1$,~$r_2$ such that $f(0) =\X(\bSigma)_{\sigma_1}$ and 
    $f(\infty) = \X(\bSigma)_{\sigma_2}$; 
  \item[(b)] Two box elements $b_1 \in \Box(\sigma_1)$, $b_2 \in
    \Box(\sigma_2)$ and non-negative integers $q_1$,~$q_2$ such that
    $q_1\rho_1+q_2\rho_2+b_1+b_2=0$ in $N$; 
  \item[(c)] A box element $b_1 \in \Box (\sigma_1)$ and a strictly positive
    rational number $l$ such that $w_2l-f_1$ is a non-negative
    integer, where $\overline{b}_1=f_1\overline{\rho}_1$.
  \end{itemize}
These data are related as follows: $r_i$ is the order of $b_i$ in $N/\ZZ\rho_i$; 
$l= (q_2 + f_2)/w_1 = (q_1 + f_1)/w_2$; 
$q_1 = \lfloor l w_2 \rfloor$, $q_2 = \lfloor l w_1 \rfloor$, 
$f_1 = \langle lw_2 \rangle$, $f_2 = \langle l w_1 \rangle$.  

\end{prop}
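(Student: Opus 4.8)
The plan is to prove Proposition~\ref{prop:one-dimensional} by establishing two cyclic implications, $(a)\Rightarrow(b)\Rightarrow(c)\Rightarrow(a)$, with the dictionary between the combinatorial data emerging naturally along the way. First I would pin down what a representable toric morphism $f\colon\PP_{r_1,r_2}\to\X(\bSigma)$ is: since both source and target are toric Deligne--Mumford stacks presented by stacky fans, a toric morphism is given by a group homomorphism $N_{\PP}\to N$ of the respective lattices compatible with the fans, where $N_{\PP}=\ZZ$ is the lattice of $\PP_{r_1,r_2}$ with its standard complete $1$-dimensional fan. The condition that $f(0)=\X(\bSigma)_{\sigma_1}$ and $f(\infty)=\X(\bSigma)_{\sigma_2}$ forces the ray generating the negative cone of $\PP_{r_1,r_2}$ to map into $\sigma_1$ and the positive ray into $\sigma_2$. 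Representability translates, via the Borisov--Chen--Smith formalism (or via the characterization of representability in terms of the stacky fan maps being ``injective on stabilizers''), into a condition on the induced maps of the finite isotropy groups $\mu_{r_i}\hookrightarrow N/\ZZ\rho_i$. Unwinding this is exactly how $r_i$ gets identified as the order of a box element $b_i\in\Box(\sigma_i)$ in $N/\ZZ\rho_i$.

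For $(a)\Rightarrow(b)$: given such an $f$, I would write down the image in $N$ of the two torus-fixed ``stacky points'' of $\PP_{r_1,r_2}$. Recall that $\PP_{r_1,r_2}$ has fan sequence with the middle map $\ZZ^2\xrightarrow{(-r_1,r_2)}\ZZ$ and kernel generated by $(r_2',r_1')$; the point is that the divisor classes of $0$ and $\infty$ in $\PP_{r_1,r_2}$, pulled back under $f$, give a relation in $N$. Concretely, the composition $\ZZ^2\to N_{\PP}=\ZZ\to N$ sends the two standard basis vectors to elements that are, up to the multiplicities $r_i$, the box elements $b_i$ together with integer multiples $q_i$ of $\rho_i$; the relation $r_1\cdot(\text{first})=r_2\cdot(\text{second})$ in $\ZZ$ (which is the defining relation of the fan of $\PP_{r_1,r_2}$) pushes forward to $q_1\rho_1+q_2\rho_2+b_1+b_2=0$ in $N$ after keeping track of how the fractional parts $\overline{b}_i=f_i\overline{\rho}_i$ combine. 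This is the step where the bookkeeping is heaviest and where I expect the main obstacle: correctly normalizing $f$ so that $l$, $q_i$, $f_i$ come out with the claimed formulas $q_1=\lfloor lw_2\rfloor$, $f_1=\langle lw_2\rangle$, etc., and confirming the non-negativity of $q_1,q_2$ from the fact that $f$ sends the respective open points into the correct maximal cones. The relation $w_2\rho_1+w_1\rho_2=0$ being \emph{minimal} is what ensures $l$ is well-defined as a single rational number from either formula $l=(q_1+f_1)/w_2=(q_2+f_2)/w_1$.

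For $(b)\Rightarrow(c)$: this is essentially elimination. Given $b_1,b_2,q_1,q_2$ with $q_1\rho_1+q_2\rho_2+b_1+b_2=0$, project to $N_\QQ\simeq\QQ$ to get $(q_1+f_1)\overline{\rho}_1+(q_2+f_2)\overline{\rho}_2=0$, and use $w_2\overline{\rho}_1+w_1\overline{\rho}_2=0$ to conclude $(q_1+f_1)/w_2=(q_2+f_2)/w_1=:l$, a positive rational (positive because $q_i\geq 0$ and at least one of $b_1,b_2$ is genuinely needed unless everything is zero, which the hypotheses on the map exclude). Then $w_2l-f_1=q_1\in\ZZ_{\geq 0}$, which is precisely (c); the box element in (c) is just $b_1$. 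Conversely $(c)\Rightarrow(b)$: from $b_1$ and $l$ with $w_2l-f_1=q_1\in\ZZ_{\geq 0}$, set $q_2=\lfloor lw_1\rfloor$ and $f_2=\langle lw_1\rangle$, define $b_2\in N$ by $\overline{b}_2=f_2\overline{\rho}_2$ and $b_2=-q_1\rho_1-q_2\rho_2-b_1$; one checks $b_2$ reduces correctly mod torsion and lies in $\Box(\sigma_2)$ since $0\le f_2<1$, and the relation in (b) holds by construction.

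For $(c)\Rightarrow(a)$ (equivalently $(b)\Rightarrow(a)$): this is the synthetic step where I build the map. From the data in (b) I would choose $r_1,r_2$ to be the orders of $b_1,b_2$ in $N/\ZZ\rho_1$, $N/\ZZ\rho_2$ respectively, form $\PP_{r_1,r_2}$, and define the lattice map $N_{\PP}=\ZZ\to N$ by specifying where the generators of the two rays go --- namely a generator of the $0$-ray maps to $-(b_1+q_1\rho_1)$-type element scaled appropriately, using $r_1$ --- so that compatibility with the fans and the relation from (b) both hold. The verification that this induces a genuine representable toric morphism of stacks is then a matter of checking the Borisov--Chen--Smith/Jiang conditions on the induced map of stacky fans: that it is a map of fans, that the images of the new rays land in $\sigma_1,\sigma_2$ as required (giving $f(0),f(\infty)$ as claimed), and that representability holds because the induced maps on stabilizer groups $\mu_{r_i}\to N/\ZZ\rho_i$ are injective by our choice of $r_i$ as the exact orders. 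Throughout I would lean on Johnson's Lemma~2 \cite{Johnson}, which treats the analogous statement, adapting it to the present conventions; the novelty here is purely in tracking the extended/box data and the $\TT$-equivariant normalization, not in the underlying geometry.
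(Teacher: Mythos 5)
Your proposal is correct and follows essentially the same route as the paper: both directions of (a)$\Leftrightarrow$(b) are handled by encoding the toric morphism as a commutative diagram of fan sequences, decomposing $\eta(\pm 1)$ as $q_i\rho_i + b_i$ with $b_i$ a box element, and reversing this by taking $r_i$ to be the exact order of $b_i$ in $N/\ZZ\rho_i$ (which is exactly where representability enters), while (b)$\Leftrightarrow$(c) is the same elimination via the minimal relation $w_2\rho_1+w_1\rho_2=0$. The bookkeeping you flag as the main obstacle is resolved in the paper by the explicit choice $m_i=r_iq_i+k_i$ with $r_ib_i=k_i\rho_i$ and by exactness of the fan sequence at $\ZZ^2$, but this is detail rather than a difference of method.
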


\begin{proof}
  Let
  \begin{equation}
    \label{eq:fan_sequence_for_X}
    \begin{aligned}
      \xymatrix{
        0\ar[rr]& &\ZZ \ar[rr]^{
          \begin{pmatrix}
            w_2\\
            w_1
          \end{pmatrix}}
        & &\ZZ^2 \ar[rr]^\rho  & &N}
    \end{aligned}
  \end{equation}
  be the fan sequence for $\X(\bSigma)$.  A representable toric morphism
  $f\colon \PP_{r_1,r_2}\to \X(\bSigma)$ is given by a commutative diagram:
  \begin{equation}
    \label{eq:toric_morphism}
    \begin{aligned}
      \xymatrix{
        0 \ar[rr] & & \ZZ \ar[dd]_m \ar[rr]^{
          \begin{pmatrix}
            r_2^\prime\\
            r_1^\prime
          \end{pmatrix}
        }& &\ZZ^2\ar[dd]_{
          \begin{pmatrix}
            m_1 & 0\\
            0   & m_2
          \end{pmatrix}
        } \ar[rr]^{
          \begin{pmatrix}
            -r_1 & r_2
          \end{pmatrix}
        }& &\ZZ\ar[dd]^\eta\\
        & & & & & &  \\
        0\ar[rr]& &\ZZ \ar[rr]_{
          \begin{pmatrix}
            w_2\\
            w_1
          \end{pmatrix}}
        & &\ZZ^2 \ar[rr]_\rho  & &N}
    \end{aligned}
  \end{equation}
  for some integers $m_1$,~$m_2$,~$m$ and some map $\eta$.  Given a
  morphism as in (a), and hence a commutative diagram
  \eqref{eq:toric_morphism}, let $b_1$ be the unique element of
  $\Box(\sigma_1)$ such that $b_1 \equiv \eta(-1) \mod \langle \rho_1
  \rangle$, and let $b_2$ be the unique element of $\Box(\sigma_2)$
  such that $b_2 \equiv \eta(1) \mod \langle \rho_2\rangle$.  Then
  there exist unique non-negative integers $q_1$,~$q_2$ such that:
  \begin{align*}
    \eta(-1) & =  q_1 \rho_1 + b_1 &
    \eta(1) & =  q_2 \rho_2 + b_2
  \end{align*}
  and we have $q_1\rho_1+q_2\rho_2+v_1+v_2=0$ in $N$.  Thus a morphism
  as in (a) determines data as in (b).
  
  Conversely, suppose that we are given $v_1$,~$v_2$,~$q_1$,~$q_2$ as
  in (b).  Define $\eta\colon \ZZ \to N$ by setting:
  \[
  \eta(1)=-b_1-q_1\rho_1=b_2+q_2\rho_2.
  \]
  Now set $r_i=\ord b_i$ in the group $N/\rho_i$; by definition there
  are integers $k_1$, $k_2$ such that $r_ib_i=k_i\rho_i$, and---for
  instance by looking at images in $\overline{N}$---we see that $0\leq
  k_i< r_i$.  Now set
  \begin{align*}
    m_1=r_1q_1+k_1 &&
    m_2=r_2q_2+k_2
  \end{align*}
  The diagram:
  \[
  \xymatrix{
    \ZZ\ar[rr]^{
      \begin{pmatrix}
        r_1^\prime\\
        r_2^\prime
      \end{pmatrix}
    }  
    & & 
    \ZZ^2\ar[dd]_{
      \begin{pmatrix}
        m_1 & 0\\
        0   & m_2
      \end{pmatrix}
    } \ar[rr]^{
      \begin{pmatrix}
        -r_1 & r_2
      \end{pmatrix}
    }& &\ZZ\ar[dd]^\eta\\
    & & & &  \\
    & & \ZZ^2 \ar[rr]_\rho  & &N}
\]
is commutative: $m_1\rho_1=r_1q_1\rho_1+k_1\rho_1=r_1\bigl(-b_1+
\eta(-1)\bigr)+r_1b_1=\eta(-r_1)$, and similarly $m_2 \rho_2 =
\eta(r_2)$.  Thus:
\[
\begin{pmatrix}
  m_1r_2^\prime\\
  m_2r_1^\prime
\end{pmatrix}
\in \Ker \rho 
\]
The fan sequence \eqref{eq:fan_sequence_for_X} defining $\X(\bSigma)$ is exact
at $\ZZ^2$, and we deduce that there exists an integer $m>0$ such that
\[
\begin{pmatrix}
  m_1r_2^\prime\\
  m_2r_1^\prime
\end{pmatrix}
=
\begin{pmatrix}
  w_2m\\
  w_1m
\end{pmatrix}
\]
and hence that the diagram:
\[
\xymatrix{
  0 \ar[rr] & & \ZZ \ar[dd]_m \ar[rr]^{
    \begin{pmatrix}
      r_2^\prime\\
      r_1^\prime
    \end{pmatrix}
  }& &\ZZ^2\ar[dd]_{
    \begin{pmatrix}
      m_1 & 0\\
      0   & m_2
    \end{pmatrix}
  } \ar[rr]^{
    \begin{pmatrix}
      -r_1 & r_2
    \end{pmatrix}
  }& &\ZZ\ar[dd]^\eta\\
  & & & & & &  \\
  0\ar[rr]& &\ZZ \ar[rr]_{
    \begin{pmatrix}
      w_2\\
      w_1
    \end{pmatrix}}
  & &\ZZ^2 \ar[rr]_\rho  & &N}    
\]
defines a stable representable morphism 
$f\colon \PP_{r_1,r_2}\to \X(\bSigma)$.

It is almost immediate that the constructions $(a)\Rightarrow (b)$ and
$(b)\Rightarrow (a)$ are inverses of each other: the key point is
that, if $f\colon \PP_{r_1,r_2}\to \X(\bSigma)$ is representable, then $r_i$ is
the order of $b_i$ in $N/\rho_i$.

The equivalence $(b) \Leftrightarrow (c)$ is immediate: 
we set $q_1 =w_2l-f_1$, write $w_1 l=q_2+f_2$ 
with $f_2=\langle w_1l \rangle$ the
fractional part and $q_2=\lfloor w_1l\rfloor$ the integer part, and
set $b_2=-q_1 \rho_1 -q_2\rho_2-b_1$.
\end{proof}

\begin{rmk} 
The box elements $b_1, b_2$ in the above proposition 
are given by the restrictions of $f$ to $0,\infty \in \PP_{r_1,r_2}$ respectively. 
The rational number $l>0$ in (c) measures the ``degree" of the map $f$ 
in the sense that $l = \int_{\PP_{r_1,r_2}} c_1(f^* \sO(1)) = m/\lcm(r_1,r_2)$,
where $\sO(1)$ is the positive generator of 
$\Pic(\X(\bSigma))$ modulo torsion. 
The degree of the map between the coarse moduli spaces 
$\bar{f} \colon \PP^1 \cong |\PP_{r_1,r_2}| \to \PP^1 \cong X(\Sigma)$ is 
given by $\overline{\eta(1)} = (q_2 \bar{\rho}_2 + \bar{b}_2) 
= (q_2 + f_2) \bar{\rho}_2 = l w_1 \bar{\rho}_2 = lw_2 |\bar{\rho}_1| 
\in \ZZ$. 
\end{rmk} 

\begin{notation}
  \label{notation:what_is_j}
  Let $\bSigma=(N,\Sigma,\rho)$ be a stacky fan.  We write
  $\sigma | \sigma'$ if $\sigma$,~$\sigma' \in \Sigma$ are
  top-dimensional cones that meet along a codimension-1 face.
  Whenever $\sigma|\sigma'$, we write $j$ for the unique index such
  that $\bar{\rho}_j$ is in $\sigma$ but not in $\sigma'$, and $j'$
  for the unique index such that $\bar{\rho}_{j'}$ is in $\sigma'$ but
  not in $\sigma$.
\end{notation}

\begin{notation}
  Let $\bSigma=(N,\Sigma,\rho)$ be a stacky fan. Given $b\in
  \Box(\bSigma)$, we define $b_i \in [0,1)$, $1\le i\le n$ 
by the conditions: 
\begin{itemize} 
\item $\bar{b}=\sum_{i=1}^n b_i\bar{\rho}_i$; 
\item $b_i = 0$ if $i \notin \sigma(b)$, where 
$\sigma(b)\in \Sigma$ is the minimal cone containing $\bar{b}$. 
\end{itemize} 
\end{notation}

\begin{prop}
  \label{prop:what_is_c}
  Let $\X(\bSigma)$ be the toric Deligne--Mumford stack associated to a stacky
  fan $\bSigma=(N,\Sigma,\rho)$.  Suppose that
  $\sigma$,~$\sigma' \in \Sigma$ satisfy $\sigma|\sigma'$ 
and let $b \in \Box(\sigma)$. 
The following are equivalent:
  \begin{enumerate}
  \item A representable toric morphism $f\colon \PP_{r_1,r_2} \to \X(\bSigma)$
    such that $f(0) = \X(\bSigma)_\sigma$, $f(\infty) =
    \X(\bSigma)_{\sigma'}$ and the restriction 
   $f|_0\colon B\mu_{r_1} \to \X(\bSigma)_\sigma$ 
   gives the box element $\hat{b} \in \Box(\sigma)$. 
  \item A positive rational number $c$ such that $\langle c \rangle =
    \hat{b}_j$.
  \end{enumerate}
  Here $j$ is as in Notation~\ref{notation:what_is_j}, and $\hat{b}
  = \inv(b)$ (see Section~\ref{sec:Chen--Ruan}).
\end{prop}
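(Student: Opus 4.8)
The plan is to deduce Proposition~\ref{prop:what_is_c} from Proposition~\ref{prop:one-dimensional} by restricting attention to the $\TT$-invariant curve joining the two torus-fixed points $\X(\bSigma)_\sigma$ and $\X(\bSigma)_{\sigma'}$. Write $\tau=\sigma\cap\sigma'$; this is a codimension-$1$ cone, contained in precisely the two maximal cones $\sigma$ and $\sigma'$, so the quotient fan $\Sigma/\tau$ is a complete $1$-dimensional fan whose two rays are the images of $\bar\rho_j$ and $\bar\rho_{j'}$ (Notation~\ref{notation:what_is_j}) and whose two maximal cones $\sigma/\tau$, $\sigma'/\tau$ correspond to $\X(\bSigma)_\sigma$, $\X(\bSigma)_{\sigma'}$. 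The closed toric substack $\X(\bSigma)_\tau\cong\X(\bSigma/\tau)$ is thus a $1$-dimensional complete toric Deligne--Mumford stack. The first step is to check that condition~(1) is equivalent to the analogous condition for $\X(\bSigma/\tau)$: the existence of a representable toric morphism $\PP_{r_1,r_2}\to\X(\bSigma/\tau)$ sending $0$ to $\X(\bSigma/\tau)_{\sigma/\tau}$ and $\infty$ to $\X(\bSigma/\tau)_{\sigma'/\tau}$, with the prescribed box element at $0$. Composition with the closed immersion $\X(\bSigma)_\tau\hookrightarrow\X(\bSigma)$ produces a morphism as in~(1); conversely a morphism $f$ as in~(1) is non-constant with $\TT$-invariant image, hence factors through the unique $\TT$-invariant irreducible $1$-dimensional substack containing both fixed points, which is $\X(\bSigma)_\tau$, and this factorization is again toric and representable and has the same restriction at $0$. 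I expect this reduction --- carried out carefully at the level of stacks (factorization, representability, preservation of the datum $f|_0$) --- together with the box-element bookkeeping in the next step, to be the only points requiring care; there is no new idea needed beyond Proposition~\ref{prop:one-dimensional}.

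Next I would apply Proposition~\ref{prop:one-dimensional} to the $1$-dimensional stack $\X(\bSigma/\tau)$, matching $\sigma$ with the cone called $\sigma_1$ there (choosing the isomorphism $N(\tau)_\QQ\cong\QQ$ so that the image $\bar\rho_1$ of $\bar\rho_j$ is negative) and $\sigma'$ with $\sigma_2$. The bookkeeping point is that the projection $N\to N(\tau)$ restricts to a bijection $\Box(\sigma)\xrightarrow{\sim}\Box(\sigma/\tau)$ --- both sides are sets of representatives for the same finite group $N(\sigma)=N/\sum_{i\in\sigma}\ZZ\rho_i$ --- and under it the coefficient $\hat b_j$ in $\overline{\hat b}=\sum_{i\in\sigma}\hat b_i\bar\rho_i$ becomes the coefficient $f_1$ in $\overline{b_1}=f_1\bar\rho_1$, since the terms with $i\in\tau$ die in $N(\tau)_\QQ=N_\QQ/\sum_{i\in\tau}\QQ\bar\rho_i$; that is, $f_1=\hat b_j$. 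Holding $b_1=\hat b$ fixed, the bijections established in the proof of Proposition~\ref{prop:one-dimensional} identify the morphisms appearing in~(1) with the strictly positive rational numbers $l$ such that $w_2 l-\hat b_j\in\ZZ_{\geq 0}$, where $w_2\rho_1+w_1\rho_2=0$ is the minimal integral relation between the two ray generators $\rho_1,\rho_2$ of $\bSigma/\tau$ (the images of $\rho_j,\rho_{j'}$ in $N(\tau)$).

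Finally I would set $c=w_2 l$. Since $w_2$ is a positive integer and $0\leq\hat b_j<1$, the conditions ``$l\in\QQ_{>0}$ and $w_2 l-\hat b_j\in\ZZ_{\geq 0}$'' are equivalent to ``$c\in\QQ_{>0}$ and $\langle c\rangle=\hat b_j$'', with $\lfloor c\rfloor$ playing the role of the non-negative integer $w_2 l-\hat b_j$. This produces the claimed bijective correspondence between morphisms $f$ as in~(1) and positive rationals $c$ as in~(2), completing the proof. Concretely, $l$ is the degree of $f$ in the sense of the remark following Proposition~\ref{prop:one-dimensional}, so $c=w_2 l$.
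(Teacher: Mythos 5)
Your proof is correct and follows essentially the same route as the paper: reduce to the one-dimensional quotient stack $\X(\bSigma/\tau)$ with $\tau=\sigma\cap\sigma'$ and apply Proposition~\ref{prop:one-dimensional}, setting $c=w_2 l$ (the paper's $c=lw_j$). You spell out the factorization through $\X(\bSigma)_\tau$ and the bookkeeping identification $f_1=\hat b_j$ in more detail than the paper, which compresses this into ``Replacing $\Sigma$ by $\Sigma/\tau$, we reduce to the one-dimensional case,'' but the argument is the same.
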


\begin{proof}
  Let 
  \[
  w_j \rho_j + 
  \left( \sum_{i \in \sigma \cap \sigma'} w_i \rho_i \right) +
  w_{j'} \rho_{j'} = 0
  \]
  be the minimal integral relation between $\{ \rho_i : \bar{\rho}_i
  \in \sigma \cup \sigma' \}$ such that $w_j>0$.  Replacing $\Sigma$
  by $\Sigma/\tau$, we reduce to the case where $\X(\Sigma)$ is
  one-dimensional.  The result now follows from
  Proposition~\ref{prop:one-dimensional}, with $w_1$ there equal to
  $w_{j'}$ here, $w_2$ there equal to $w_j$ here, and $c$ equal to $l
  w_j$.
\end{proof}

\begin{rmk}
  \label{rmk:determined} 
Note that the choice of $\sigma$,~$\sigma'$,~$b$ and~$c$ in
Proposition~\ref{prop:what_is_c} determines the map $f \colon
\PP_{r_1,r_2} \to \X(\bSigma)$ uniquely, and hence determines both $r_2$ and
the box element $b' \in \Box(\sigma')$ given by the restriction 
$f|_\infty \colon B\mu_{r_2} \to \X(\bSigma)_{\sigma'}$. 
More precisely, $b'$ is the unique element of $\Box(\sigma')$ 
such that 
\begin{equation}
\label{eq:def_b'}
\hat{b} + \lfloor c \rfloor \rho_j + q' \rho_{j'} + b' \equiv 0 
\mod 
\bigoplus_{i\in \sigma\cap \sigma'} \ZZ \rho_i 
\end{equation} 
for some $q'\in \ZZ_{\ge 0}$.  Note the asymmetry between $b$ and
$b'$: the restriction $f|_0$ gives $\hat{b} = \inv(b)$ and the
restriction $f|_\infty$ gives $b'$.  This convention is useful in our
recursion analysis.
\end{rmk}

\begin{defn} 
\label{defn:what_is_l_c_j}
Let $\sigma$, $\sigma'\in \Sigma$ be top-dimensional cones satisfying
$\sigma|\sigma'$.  Let $j, j'$ be as in
Notation~\ref{notation:what_is_j}.  Define $l(c,\sigma,j)$ to be the
element of $\LL\otimes \QQ \cong H_2(X,\QQ)$ given by the unique
relation of the form:
\[
c \bar{\rho}_j + \left( \sum_{i\in \sigma\cap \sigma'} c_i \bar{\rho}_i\right) 
+ c' \bar{\rho}_{j'} =0 
\]
\end{defn}

\begin{rmk}
  When we have a box element $b\in \Box(\sigma)$ satisfying $\langle c
  \rangle = \hat{b}_j$, $l(c,\sigma,j)$ is the degree of the
  representable toric morphism $f\colon \PP_{r_1,r_2} \to \X(\bSigma)$
  specified by a rational number $c>0$ in
  Proposition~\ref{prop:what_is_c}(2).  We have $D_j \cdot
  l(c,\sigma,j) = c$, $D_{j'} \cdot l(c,\sigma,j) = c'$, $D_i \cdot
  l(c,\sigma,j) = c_i$ for $i\in \sigma\cap \sigma'$ and $D_i \cdot
  l(c,\sigma,j) =0$ for $i\notin \sigma \cup \sigma'$.
\end{rmk}

\begin{defn} 
\label{defn:fixed_curve_deg}
Let $\sigma$, $\sigma'\in \Sigma$ be top-dimensional cones satisfying
$\sigma|\sigma'$.  Let $j$,~$j'$ be as in
Notation~\ref{notation:what_is_j}.  Let $b\in \Box(\sigma)$ and $b'
\in \Box(\sigma')$.  Define $\Lambda E_{\sigma,b}^{\sigma',b'}\subset
\LL\otimes \QQ \cong H_2(\X,\QQ)$ to be the set of degrees of
representable toric morphisms $f\colon \PP_{r_1,r_2} \to \X(\bSigma)$
such that $f(0) = \X(\Sigma)_{\sigma}$, $f(\infty) =
\X(\Sigma)_{\sigma'}$ and $f|_0$ and $f|_\infty$ give respectively the
box elements $\hat{b}$ and $b'$.  In other words:
\[
\Lambda E_{\sigma,b}^{\sigma',b'} 
= \left\{ l(c,\sigma,j)\in \LL\otimes \QQ : 
\begin{array}{l} 
c>0 \text{ such that  $\langle c \rangle = \hat{b}_j$ and} \\ 
\text{that \eqref{eq:def_b'} holds 
for some $q'\in\ZZ_{\ge 0}$} 
\end{array} \right \}
\]
\end{defn}

\section{The Extended Picard Group}
\label{sec:extended_Picard}

In this Section we introduce notions of extended Picard group for a
Deligne--Mumford stack $\X$ and extended degree for an orbifold stable
map $f \colon\C \to \X$.  There is less here than meets the eye: the
extended degree of $f$ amounts in the end to a convenient way of
packaging the extra discrete data attached to $f$, given by the
elements of $\Box(\X)$ associated to the marked points.  In what
follows we will use this material only when $\X$ is a toric
Deligne--Mumford stack, but the definitions make sense for general
Deligne--Mumford stacks and we give them in this context.  

\begin{defn}
  The \emph{box} of a Deligne--Mumford stack $\X$, denoted $\Box \X$,
  is the set of generic representable morphisms $b \colon B\mu_r\to \X$. 
In other words, it is the set of connected components 
of the inertia stack $I\X$. 
We write the \emph{order} $r$ of the box element $b$ as $r_b$.
\end{defn}

\begin{rmk}
  If $\X$ is a toric Deligne--Mumford stack then this reduces to the
  notion of $\Box(\bSigma)$ given in Section~\ref{sec:toric_basics}.
\end{rmk}

\begin{defn}
  Let $\X$ be a Deligne--Mumford stack and let $S$ be a finite set
  equipped with a map $S\to \Box \X$.  Abusing notation, we denote an
  element of $S$ and its image in $\Box \X$ by the same symbol $b$.
  The \emph{$S$-extended Picard group} of $\X$, denoted by $\Pic^S
  \X$, is defined by the exact sequence:
  \begin{equation} 
    \label{eq:PicS_seq} 
    \xymatrix{
      0 \ar[r] & \Pic^S \X \ar[r] & \Pic \X \oplus \bigoplus_{b\in S} r_b^{-1}\ZZ \ar[r] &  {\bigoplus_{b\in S}} r_b^{-1} \ZZ/\ZZ \ar[r] & 0
    }
  \end{equation}
  In other words, an element of $\Pic^S \X$ is a pair $(L,\varphi)$ where
  $L\in \Pic \X$ is a line bundle on $\X$, and $\varphi\colon S \to \QQ$ has
  the property that $\varphi(b) +  \age_b (L) \in \ZZ$, 
  where $\age_b(L)$ is the age of $L$ at $b$, i.e.~$\age_b(L) = 
  k_b/r_b$ with $0\le k_b<r_b$ the character of the $\mu_{r_b}$-representation 
$b^\star L$. 
\end{defn}

\begin{defn} 
\label{def:extended_degree}
Let $f\colon (\C,x_1,\dots,x_k) \to \X$ be an orbifold stable map. 
An \emph{$S$-decoration} of $f$ is an assignment of 
$s_j\in S$ to each marking $x_j$ such that the element of 
$\Box \X$ given by $f|_{x_j}$ coincides with the image of $s_j$ in $\Box \X$. 
The \emph{$S$-extended degree} of an $S$-decorated orbifold stable 
map $f$ is an element of $(\Pic^S \X)^*$ defined by 
\[
\deg^S(f)(L,\varphi) = \deg f^*L + \sum_{j=1}^k \varphi(s_j) 
\]
The Riemann--Roch theorem for orbifold curves \cite{AGV2} shows that 
the right-hand side is an integer.   
The \emph{$S$-extended Mori cone} is the cone $\NE^S(\X) \subset 
(\Pic^S \X)^* \otimes \RR$ generated by the $S$-extended 
degrees of $S$-decorated orbifold stable maps. 
One can easily see that: 
\[
\NE^S(\X) \cong \NE(\X) \times \RR_{\ge 0}^{|S|} 
\]
under the standard decomposition 
\begin{equation} 
\label{eq:PicSdual_Picdual} 
(\Pic^S \X)^* \otimes \RR 
\cong ((\Pic \X)^* \otimes \RR) \oplus \RR^{|S|}
\end{equation} 
induced from \eqref{eq:PicS_seq},  
where $\NE(\X)$ denotes the usual Mori cone. 
\end{defn} 
\begin{rmk} 
We can think of elements of $S$ as ``states" to be 
inserted at markings of a stable map. 
If an $S$-decorated orbifold stable map has a degree $d\in H_2(X, \ZZ)$ 
and each ``state" $b\in S$ is inserted $n_b$ times to it, the $S$-extended 
degree with respect to $(L,\varphi)$ is given by: 
\[
\int_d c_1(f^*L)   + \sum_{b\in S} n_b \varphi(b). 
\]
The value $\varphi(b)$ can be viewed as the degree of 
the variable dual to $b\in S$. 
\end{rmk} 

\begin{rmk}
  When $\X$ is Gorenstein and the subset $S$ consists of those box
  elements of age $1$, the $S$-extended degree of a stable map is
  essentially the same thing as the orbifold Neron--Severi degree
  defined by Bryan--Graber \cite[\S2]{BG}.
\end{rmk}

\subsection{Extended Degrees for Toric Stacks}

Suppose now that $\X = \X(\bSigma)$ is the toric Deligne--Mumford
stack associated to a stacky fan $\bSigma=(N, \Sigma, \rho)$, and that
$S$ is a finite set equipped with a map $S \to N_\Sigma = \{c\in N :
\bar{c} \in |\Sigma|\}$.  By composing it with a natural projection
$N_\Sigma \to \Box(\bSigma)$ we obtain a map $S\to \Box(\bSigma)$.  We
now identify $\LL^{S\vee}$ with $\Pic^S\X(\bSigma)$.
  
Let $m = |S|$ and let $s_1,\dots,s_m\in N_\Sigma$ be the images of 
elements of $S$ in $N_\Sigma$.  
The fan sequence and the $S$-extended fan sequence fit 
into the following commutative diagram: 
\begin{equation}
\label{eq:fan_sequences}
\begin{aligned}
  \xymatrix{
    0 \ar[r] & \LL \ar[r]^{\iota} \ar[d] &  \LL^S \ar[r] \ar[d]  & \ZZ^m \ar@{=}[d] \\
    0 \ar[r] & \ZZ^n \ar[r]  \ar[d]_\rho & \ZZ^{n+m} \ar[r] \ar[d]_{\rho^S} & \ZZ^m \ar[r] \ar[d] & 0 \\
    0 \ar[r] &  N \ar@{=}[r] & N \ar[r] & 0 
  }
\end{aligned}
\end{equation} 
with exact rows and columns.  We give a splitting of the first row
over the rational numbers.  Define
  \[
\mu \colon \QQ^m \to \LL^S \otimes \QQ  
  \]
by sending the $j$-th standard basis vector to  
\[
e_{j+n} - \sum_{i\in \sigma(j)} s_{ji} e_i  \in \LL^S\otimes \QQ 
\subset \QQ^{n+m} 
\]
where $\sigma(j)$ is the minimal cone containing $\bar{s}_j$ and 
the positive numbers $s_{ji}$ are determined by  
$\sum_{i\in \sigma(j)} s_{ji} \bar{\rho}_i = \bar{s}_j$. 
The map $\mu$ defines a splitting of the first row of 
\eqref{eq:fan_sequences} 
over $\QQ$: 
\begin{equation} 
\label{eq:LS_splitting} 
\LL^S \otimes \QQ   \cong (\LL\otimes \QQ)  \oplus \QQ^m 
\end{equation} 
Let $r_j$ be the order of the image of $s_j\in N_\Sigma$ 
in $N/\sum_{i\in \sigma(j)} \ZZ \rho_i$.  
Then we have $r_j s_{ji} \in \ZZ$. 
Therefore the dual of $\mu$ gives 
\[
\mu^* \colon 
\LL^{S\vee} \longrightarrow (\LL^S)^* \longrightarrow 
\bigoplus_{j=1}^m r_j^{-1} \ZZ. 
\]
One can check that the map $\mu^*$ together with the canonical map 
$\iota^* \colon \LL^\vee \to \LL^{S\vee}$ fits into 
the exact sequence: 
\[
\xymatrix{
  0 \ar[r] & \LL^{S\vee} \ar[rr]^-{(\iota^*, \mu^*)} && \LL^{\vee} \oplus {\bigoplus_{j=1}^m} r_j^{-1} \ZZ 
  \ar[rr]^-{(\res, \can)} && {\bigoplus_{j=1}^m} r_j^{-1} \ZZ/ \ZZ \ar[r] & 0
}
\]
where $\res$ maps an element of $\LL^\vee \cong \Pic(\X)$ to 
the ages of the corresponding line bundle 
at the box elements given by $s_1,\dots,s_m$ 
and $\can$ is the canonical projection.  
Thus we obtain:
\begin{prop}  
We have $\Pic^S \X(\bSigma) \cong \LL^{S\vee}$. 
\end{prop}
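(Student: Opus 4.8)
The plan is to obtain the isomorphism by comparing two short exact sequences. By definition \eqref{eq:PicS_seq}, $\Pic^S\X(\bSigma)$ is the kernel of the homomorphism $\Pic\X(\bSigma)\oplus\bigoplus_{b\in S}r_b^{-1}\ZZ\to\bigoplus_{b\in S}r_b^{-1}\ZZ/\ZZ$ sending $(L,\varphi)$ to $\big(\age_b(L)+\varphi(b)\big)_{b}$ modulo $\ZZ$, while just above the statement we have produced a short exact sequence $0\to\LL^{S\vee}\xrightarrow{(\iota^*,\mu^*)}\LL^\vee\oplus\bigoplus_{j=1}^m r_j^{-1}\ZZ\xrightarrow{(\res,\can)}\bigoplus_{j=1}^m r_j^{-1}\ZZ/\ZZ\to0$ exhibiting $\LL^{S\vee}$ as the analogous kernel. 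First I would match the outer terms. The box element of $\X(\bSigma)$ attached to $s_j$ generates a cyclic group of order $r_j$ (this is exactly how $r_j$ was defined, since $s_j$ and that box element have the same image in $N/\sum_{i\in\sigma(j)}\ZZ\rho_i$), so the summands $r_b^{-1}\ZZ$ and $r_b^{-1}\ZZ/\ZZ$ in the two displays coincide; and Section~\ref{sec:equiv_obj} provides a canonical isomorphism $\Pic\X(\bSigma)\cong\LL^\vee$ under which $D_i$ corresponds to $\rho^\vee(u_i)$. Granting that the two maps to $\bigoplus_{j}r_j^{-1}\ZZ/\ZZ$ agree under these identifications, both sequences display their left-hand term as the kernel of the same map, so the induced morphism is the desired canonical isomorphism $\LL^{S\vee}\xrightarrow{\sim}\Pic^S\X(\bSigma)$.

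Two points then remain. The first is the exactness of the displayed sequence (the ``one can check'' above): I would establish it by Gale-dualizing the commutative diagram \eqref{eq:fan_sequences} and running the snake lemma --- the morphism of fan sequences in \eqref{eq:fan_sequences} induces, at the level of $H^\bullet(\Cone(-)^*)$, the map appearing as the first component of $(\iota^*,\mu^*)$, while the rational splitting \eqref{eq:LS_splitting}, together with the integrality $r_j s_{ji}\in\ZZ$, identifies the image of $\mu^*$ with $\bigoplus_j r_j^{-1}\ZZ$ and the resulting cokernel with $\bigoplus_j r_j^{-1}\ZZ/\ZZ$. The second, and the real content, is that under $\Pic\X(\bSigma)\cong\LL^\vee$ the map $\res$ coincides with $L\mapsto\big(\age_{s_j}(L)\big)_j$. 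I would check this on the generators $D_i^S$ of $\LL^{S\vee}$ obtained as the images under $\rho^{S\vee}$ of the standard basis of $(\ZZ^*)^{n+m}$: from the formula $\mu(e_j)=e_{n+j}-\sum_{i\in\sigma(j)}s_{ji}e_i$ one reads off that the $j$-th component of $\mu^*(D_i^S)$ is $-s_{ji}$ for $i\in\sigma(j)$, is $1$ for $i=n+j$, and vanishes otherwise, and that the first component of $(\iota^*,\mu^*)$ sends $D_i^S$ to $D_i$ for $i\le n$ and $D_{n+j}^S$ to $0$; on the other hand the box element attached to $s_j$ is $s_j-\sum_{i\in\sigma(j)}\lfloor s_{ji}\rfloor\rho_i$, whose $i$-th coordinate is $\langle s_{ji}\rangle$ for $i\in\sigma(j)$ and $0$ otherwise, so by the formula (standard for toric Deligne--Mumford stacks) that the age of $D_i$ at a box element $b$ with $\bar b=\sum_k b_k\bar\rho_k$, $0\le b_k<1$, equals $b_i$, we get $\age_{s_j}(D_i)=\langle s_{ji}\rangle\equiv-\big(\mu^*(D_i^S)\big)_j\pmod\ZZ$. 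This is precisely the relation that makes the right-hand squares of the two displays commute.

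I expect the main obstacle to be keeping the Gale-duality and age conventions aligned --- in particular matching the purely lattice-theoretic maps $\iota^*$ and $\mu^*$ against the geometric age map on the torsion subgroups of $\LL^{S\vee}$ and $\LL^\vee$, and making the sign and normalization choices in Gale duality compatible with the convention $0\le\age<1$. Once the rational statement \eqref{eq:LS_splitting} is in place, these torsion computations should force the integral refinement and pin down the displayed exact sequence, after which the comparison with \eqref{eq:PicS_seq} finishes the proof.
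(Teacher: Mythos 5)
Your proposal is correct and follows essentially the same route as the paper: the paper's proof consists precisely of constructing the splitting $\mu$, the map $\mu^*$, and the displayed exact sequence for $\LL^{S\vee}$, and then comparing it with the defining sequence \eqref{eq:PicS_seq} of $\Pic^S\X(\bSigma)$. The only difference is that you spell out the verification that the paper leaves as ``one can check''---the exactness via Gale duality and the identification of $\res$ with the age map via the computation $\age_{s_j}(D_i)=\langle s_{ji}\rangle\equiv-\bigl(\mu^*(D_i^S)\bigr)_j\pmod{\ZZ}$ on generators---which is accurate and consistent with the paper's conventions.
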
 

We have $(\Pic \X(\bSigma))^*\otimes \RR \cong \LL \otimes \RR$ and
$(\Pic^S \X(\bSigma))^*\otimes \RR \cong \LL^S\otimes \RR$.  The
standard decomposition \eqref{eq:PicSdual_Picdual} matches with the
splitting \eqref{eq:LS_splitting}.  The Mori cone and the $S$-extended
Mori cone are described, as subsets of $\LL\otimes \RR$ and
$\LL^S\otimes \RR$, as follows:
\begin{align*} 
\NE(\X(\bSigma)) & = \sum_{\sigma \in \Sigma} C_\sigma^\vee \\
\NE^S(\X(\bSigma)) & = \iota(\NE(\X(\bSigma))) + \mu((\RR_{\ge 0})^m) \\
&\cong \NE(\X(\bSigma)) \times (\RR_{\ge 0})^m 
\quad \text{under \eqref{eq:LS_splitting}.}
\end{align*} 
Here $C_\sigma^\vee\subset \LL\otimes \RR$ is the dual cone of
\[
C_\sigma =\sum_{\substack{i : 1\le i\le n, \\i\notin \sigma}} \RR_{\ge 0} D_i 
\subset \LL^\vee \otimes \RR. 
\]
Our semi-projectivity assumption implies that the Mori cone $\NE(\X(\bSigma))$ is strictly convex.

\begin{defn}
Recall that $\LL^S \subset \ZZ^{n+m}$, where $m= |S|$. 
For a cone $\sigma \in \Sigma$, 
denote by
$\Lambda_\sigma^S\subset \LL^S \otimes \QQ$ the subset consisting of
elements 
  \[
  \lambda=\sum_{i=1}^{n+m}\lambda_i e_i
  \]
  such that $\lambda_{n+j}\in \ZZ, 1\leq j\leq m$, and $\lambda_i\in
  \ZZ$ if $i\notin \sigma$ and $i \leq n$. Set
  $\Lambda^S:=\bigcup_{\sigma\in \Sigma} \Lambda^S_{\sigma}$.
\end{defn}

\begin{defn} \label{defn:reduction_function}
The \emph{reduction function} is
  \begin{equation*}
    \begin{aligned}
      v^S \colon \Lambda^S & \longrightarrow  \Box(\bSigma) \\
      \lambda & \longmapsto \sum_{i=1}^n
\lceil\lambda_i\rceil \rho_i+\sum_{j=1}^m\lceil \lambda_{n+j}\rceil s_j
    \end{aligned}
  \end{equation*} 
This sends an element of $\Lambda^S_\sigma$ to $\Box(\sigma)$ as 
we have $\overline{v^S(\lambda)} = \sum_{i=1}^n \langle -\lambda_i \rangle 
\bar{\rho}_i \in \sigma$ for $\lambda \in \Lambda^S_\sigma$. 
Note that $v^S(\lambda)_i = \langle -\lambda_i \rangle$. 
For a box element $b\in \Box(\bSigma)$, we set:  
\begin{align*} 
\Lambda^S_b &:= \{\lambda \in \Lambda^S: v^S(\lambda) = b\} 
\intertext{and define:} 
\Lambda E^S & := \Lambda^S \cap \NE^S(\X(\bSigma)) \\ 
\Lambda E^S_b & := \Lambda^S_b \cap \NE^S(\X(\bSigma)) 
\end{align*}
We have $\Lambda^S_b\subset \Lambda^S_\sigma$ 
if $b\in \Box(\sigma)$. 
\end{defn}

\begin{rmk}
Elements of $\Lambda E^S_b$ can be interpreted as the $S$-extended
degrees of certain orbifold stable maps, as follows.  Let $f \colon
(\C,x_1,\dots,x_k,x_\infty) \to \X$ be an orbifold stable map such
that $f|_{x_\infty}$ gives the box element $\hat{b}\in \Box(\bSigma)$
and the rest of the markings $x_1,\dots,x_k$ are $S$-decorated,
i.e.~each $x_i$ is assigned an element of $S$ that maps to the box
element $f|_{x_i}$.  Then $f$ is naturally an $(S\sqcup
\{\hat{b}\})$-decorated stable map and has the
$(S\sqcup\{\hat{b}\})$-extended degree of the form $(\kappa,1)\in
\LL^{S\sqcup \{\hat{b}\}} \subset \ZZ^{n+m} \times \ZZ$ for some
$\kappa \in \ZZ^{n+m}$.  On the other hand, we have a ``wrong-way map"
$\epsilon \colon \LL^{S \sqcup \{\hat{b}\}} \otimes \QQ \to
\LL^{S}\otimes \QQ$ defined by the commutative diagram
\[
\xymatrix{
  \LL^{S\sqcup \{\hat{b}\}} \otimes \QQ \ar[r]^-{\cong} \ar[d]_-\epsilon &
  (\LL\otimes \QQ) \oplus \QQ^{m+1} \ar[d]^-{\text{\rm projection}} \\
  \LL^{S}\otimes \QQ \ar[r]^-{\cong} &  (\LL\otimes \QQ) \oplus \QQ^m
}
\]
where the horizontal arrows are the splitting 
given in \eqref{eq:LS_splitting}. 
The map $\epsilon$ induces a bijection 
\begin{align*} 
 \epsilon \colon & \left\{ (\kappa,1) \in \LL^{S\sqcup \{\hat{b}\}} : 
\kappa \in \ZZ^{n+m} \right\}   \cong 
\Lambda^S_b \\
& \text{where} \quad 
\epsilon(\kappa,1)_i = \begin{cases} 
\kappa_i + \hat{b}_i & 1 \le i\le n \\ 
\kappa_i & n+1 \le i\le n+m 
\end{cases} 
\end{align*}
and sends the $(S\sqcup \{\hat{b}\})$-extended degree 
\[
\deg^{S \sqcup\{\hat{b}\}}(f) =(\kappa,1) 
\in \LL^{S\sqcup\{\hat{b}\}} \cap 
\NE^{S\sqcup \{\hat{b}\}}(\X(\bSigma))
\]
to the $S$-extended degree $\deg^S(f) \in \Lambda E^S_{b}$.  Here we
regard $f$ as being $S$-decorated by forgetting the last marking
$x_{\infty}$; we need to generalize
Definition~\ref{def:extended_degree} by allowing orbifold stable maps
with domain curves having unmarked stacky points (and in this case
$\deg^S(f)(L,\varphi)$ is not necessarily an integer).
\end{rmk}

\begin{lem}
\label{lem:addition_map} 
Let $\sigma, \sigma'$ be top-dimensional cones of $\Sigma$ such that
$\sigma|\sigma'$.  Let $b\in \Box(\sigma)$ and let $b' \in
\Box(\sigma')$.  Recall the set $\Lambda E_{\sigma,b}^{\sigma,b'}
\subset \LL \otimes \QQ$ from Definition~\ref{defn:fixed_curve_deg}.
Addition in $\LL^S\otimes \QQ$ induces a map $\Lambda E_{\sigma,
  b}^{\sigma',b'} \times \Lambda^S_{b'} \to \Lambda_{b}^S$. Moreover,
for fixed $d\in \Lambda E_{\sigma,b}^{\sigma',b'}$, the map $\lambda'
\mapsto \lambda' +d$ induces a bijection $\Lambda^S_{b'} \cong
\Lambda^S_b$.
\end{lem}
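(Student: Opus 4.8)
The plan is to show that the map in question is simply translation by $\iota(d)$, where $\iota\colon\LL\to\LL^S$ is the inclusion from \eqref{eq:fan_sequences}; concretely $\iota(d)\in\LL^S\otimes\QQ$ carries the coordinates $D_i\cdot d$ ($1\le i\le n$) in its first $n$ slots and zeros in the last $m$. Since for a fixed $d$ this translation is a bijection of $\LL^S\otimes\QQ$ with inverse $\lambda\mapsto\lambda-\iota(d)$, the whole lemma reduces to the two assertions: (i) $\lambda'+\iota(d)\in\Lambda^S_b$ whenever $\lambda'\in\Lambda^S_{b'}$ and $d\in\Lambda E_{\sigma,b}^{\sigma',b'}$; and (ii) $\lambda-\iota(d)\in\Lambda^S_{b'}$ whenever $\lambda\in\Lambda^S_b$. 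These are the same computation run forwards and backwards, so I would carry out (i) carefully and then transpose.

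For (i), write $d=l(c,\sigma,j)$ with $c>0$, $\langle c\rangle=\hat b_j$, and $\tau=\sigma\cap\sigma'$. By Definition~\ref{defn:what_is_l_c_j} and the remark following it, the coordinates of $d$ are $D_j\cdot d=c$, $D_{j'}\cdot d=c'$, $D_i\cdot d=c_i$ for $i\in\tau$, and $0$ for $i\notin\sigma\cup\sigma'$, where $c\,\bar{\rho}_j+\sum_{i\in\tau}c_i\,\bar{\rho}_i+c'\,\bar{\rho}_{j'}=0$ in $N_\QQ$; as $\{\bar{\rho}_i:i\in\sigma\}$ is a basis of $N_\QQ$ and $\sigma'=\tau\cup\{j'\}$, this is the unique relation (up to scaling) among $\{\bar{\rho}_i:i\in\sigma\cup\sigma'\}$. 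The crux is to read off the $c_i$ from \eqref{eq:def_b'}: because $d\in\Lambda E_{\sigma,b}^{\sigma',b'}$ there exist $k_i\in\ZZ$ and $q'\in\ZZ_{\ge0}$ with $\hat b+\lfloor c\rfloor\rho_j+q'\rho_{j'}+b'=\sum_{i\in\tau}k_i\rho_i$ in $N$; expanding $\hat b$ and $b'$ in the respective bases inside $N_\QQ$ turns this into another relation among $\{\bar{\rho}_i:i\in\sigma\cup\sigma'\}$, and since $\langle c\rangle=\hat b_j$ makes its $\bar{\rho}_j$-coefficient $\hat b_j+\lfloor c\rfloor$ equal to $c$, the proportionality constant must be $1$. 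I therefore expect $c'=q'+b'_{j'}$ and $c_i=\hat b_i+b'_i-k_i$ for $i\in\tau$; combining these with the remaining indices ($b'_j=0=b_{j'}$, and $b_i+\hat b_i\in\ZZ$ since $\hat b_i=\langle-b_i\rangle$) yields the single integrality statement $b_i+(D_i\cdot d)-b'_i\in\ZZ$ for all $1\le i\le n$.

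Granting this, set $\lambda=\lambda'+\iota(d)\in\LL^S\otimes\QQ$. For $i\notin\sigma$, $i\le n$, one has $b_i=0$ (as $b\in\Box(\sigma)$), so $D_i\cdot d\equiv b'_i\pmod{\ZZ}$; together with $\lambda'_i\equiv-b'_i\pmod{\ZZ}$ (from $v^S(\lambda')=b'$) this gives $\lambda_i\in\ZZ$, while the last $m$ coordinates of $\lambda$ agree with those of $\lambda'$, hence are integral; so $\lambda\in\Lambda^S_\sigma$. For the reduction, using $\lceil x\rceil=x+\langle-x\rangle$ and the identity $v^S(\mu)_i=\langle-\mu_i\rangle$ one finds $v^S(\lambda)-v^S(\lambda')=\sum_{i=1}^n\big((D_i\cdot d)+b_i-b'_i\big)\rho_i$; substituting the explicit values of $D_i\cdot d$, using \eqref{eq:def_b'} to eliminate $\lfloor c\rfloor\rho_j+q'\rho_{j'}$, and finally the standard identity $b+\hat b=\sum_{i\in\sigma,\;b_i\ne0}\rho_i$ in $N$, this collapses to $b-b'$. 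Hence $v^S(\lambda)=b$ and $\lambda\in\Lambda^S_b$. Assertion (ii) comes out of the same chain of equalities with $\iota(d)$ replaced by $-\iota(d)$; this, together with the obvious injectivity of translation, finishes the argument.

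I expect the main obstacle to be the middle step: pinning down $D_i\cdot d$ in the ``interior'' directions $i\in\tau$. Whereas $D_j\cdot d=c$ and $D_{j'}\cdot d=c'$ are tied directly to the defining data of $\Lambda E_{\sigma,b}^{\sigma',b'}$, the $c_i$ are reachable only through the proportionality comparison above. A subsidiary point needing care is that $v^S$ takes values in the possibly-torsion group $N$, so it is not enough to match the $[0,1)$-coordinates of the box elements — the equality $v^S(\lambda)=b$ must be verified in $N$, which is exactly why \eqref{eq:def_b'} (an identity in $N$, not merely in $\overline{N}$) and the integral form $b+\hat b=\sum_{b_i\ne0}\rho_i$ of the box involution are used. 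Morally (and rigorously when $\lambda'\in\Lambda E^S_{b'}$), (i) reflects a gluing: $d$ is the degree of a representable toric morphism $\PP_{r_1,r_2}\to\X(\bSigma)$ carrying $\hat b$ at $0$ and $b'$ at $\infty$, $\lambda'$ is the $S$-extended degree of an $S$-decorated stable map carrying $\widehat{b'}$ at its distinguished marking, and gluing them along a node — using additivity of the degree of a line bundle over the components of a nodal orbicurve — produces an $S$-decorated stable map carrying $\hat b$ at its distinguished marking and of $S$-extended degree $\lambda'+\iota(d)$, which lies in $\Lambda E^S_b$ by the remark preceding the statement.
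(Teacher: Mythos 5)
Your proposal is correct and follows essentially the same route as the paper's proof: translate by the coordinate vector of $l(c,\sigma,j)$, use \eqref{eq:def_b'} to extract the integrality $\lfloor c'\rfloor=q'$, $\langle c'\rangle=b'_{j'}$ (and the analogous statements for the $c_i$) so that $\lambda'+\iota(d)\in\Lambda^S_\sigma$, and then compute the reduction function to conclude $v^S(\lambda)=b$. The only cosmetic difference is that you verify $v^S(\lambda)-v^S(\lambda')=b-b'$ as an exact identity in $N$ via $b+\hat b=\sum_{b_i\neq 0}\rho_i$, whereas the paper computes modulo $\sum_{i\in\sigma\cap\sigma'}\ZZ\rho_i$ and invokes uniqueness of representatives in $\Box(\sigma)$; both are valid.
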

\begin{proof} 
  Take $\lambda' \in \Lambda^S_{b'}$ and $l(c,\sigma,j) \in \Lambda
  E_{\sigma,b}^{\sigma',b'}$.  Here $j$ is the index defined in
  Notation~\ref{notation:what_is_j} and $c$ is a positive number such
  that $\langle c \rangle = \hat{b}_j$ and that \eqref{eq:def_b'}
  holds for some $q'\in \ZZ_{\ge 0}$.  We need to show that $\lambda
  := \lambda' + l(c,\sigma,j) \in \Lambda^S_b$.  It suffices to show
  that $v^S(\lambda) = b$.  First we show $\lambda \in
  \Lambda^S_\sigma$.  As described in
  Definition~\ref{defn:what_is_l_c_j}, $l(c,\sigma,j)$ is given by the
  relation of the form
\begin{equation} 
\label{eq:relation_lcj} 
c \bar{\rho}_j + 
c' \bar{\rho}_{j'} + \sum_{i\in \sigma \cap \sigma'} c_i \bar{\rho}_i 
=0
\end{equation} 
Thus the $i$th component of $\lambda\in \LL^S\otimes \QQ 
\subset \QQ^{n+m}$ is given by 
\[
\lambda_i = 
\begin{cases} 
\lambda'_j + c & i=j \\ 
\lambda'_{j'} + c' & i= j' \\ 
\lambda'_i + c_i & i \in \sigma \cap \sigma' \\ 
\lambda'_i & \text{otherwise} 
\end{cases} 
\]
To show $\lambda\in \Lambda_\sigma^S$, it suffices 
to see that $\lambda'_{j'} + c' \in \ZZ$. 
Since $v^S(\lambda') = b'$, we have $\langle - \lambda'_{j'} \rangle = b'_{j'}$. 
On the other hand, \eqref{eq:def_b'} together with the relation  
\eqref{eq:relation_lcj} shows that 
\[
\lfloor c' \rfloor = q' \qquad \text{and} \qquad  \langle c' \rangle = b'_{j'}
\]
This proves $\lambda'_{j'} + c' \in \ZZ$ and hence 
$\lambda \in \Lambda_\sigma^S$. 
Now we show $v^S(\lambda) = b$. 
We already know that $v^S(\lambda)$ lies in 
$\Box(\sigma)$. On the other hand, 
\begin{align*} 
v^S(\lambda) & = \sum_{i\notin \sigma \cup \sigma'} 
\lceil \lambda'_i \rceil \rho_i + 
\sum_{i=1}^m \lceil \lambda'_{n+i} \rceil s_{n+i} 
+ \sum_{i\in \sigma\cap \sigma'} \lceil \lambda'_i + c_i \rceil \rho_i   
+ \lceil \lambda'_j + c \rceil \rho_j 
+ \lceil \lambda'_{j'} + c' \rceil \rho_{j'} \\
& = b' + \left(\lceil \lambda'_j + c \rceil  
- \lceil \lambda'_j \rceil \right)\rho_j 
+ \left(\lceil \lambda'_{j'} + c' \rceil - \lceil \lambda'_{j'} \rceil \right) \rho_{j'} 
+ \sum_{i\in \sigma \cap \sigma'} 
\left(\lceil \lambda'_i + c_i \rceil -\lceil \lambda'_i \rceil 
\right)\rho_i \\ 
&  \equiv b' + \lceil c \rceil \rho_j + 
q' \rho_{j'} 
\mod \sum_{i\in \sigma\cap \sigma'} \ZZ \rho_i 
\end{align*} 
where we used $\lambda'_j \in \ZZ$ and $ \lceil \lambda'_{j'} + c'
\rceil - \lceil \lambda'_{j'} \rceil = \lambda'_{j'} + c' - \lceil
\lambda'_{j'} \rceil = c' - \langle -\lambda'_{j'}\rangle = c' -
b'_{j'} = q'$.  The last expression is congruent to $b$ modulo
$\sum_{i\in \sigma} \ZZ \rho_i$ by \eqref{eq:def_b'}.  Therefore
$v^S(\lambda) = b$ as claimed.  For the converse, if $\lambda\in
\Lambda_b^S$, one can argue similarly to show that $\lambda
-l(c,\sigma,j)$ lies in $\Lambda_{b'}^S$.
\end{proof} 

\section{Toric Mirror Theorem}
\label{sec:toric_mir_thm}

In this Section we state the main result of this paper,
Theorem~\ref{I_is_on_the_cone}. 

\begin{notation}
  Let $\sigma \in\Sigma$ be a top-dimensional cone. We write
  $u_k(\sigma)$ for the character of $\TT$ given by the restriction of
  the line bundle $u_k$ to the $\TT$-fixed point
  $\X(\bSigma)_\sigma$.
\end{notation}

\begin{notation} 
Let $S$ be a finite set equipped with a map $S \to N_\Sigma$ 
and set $m = |S|$. 
For $\lambda \in  \LL^S \otimes\QQ$, we write 
\[
\lambda = (d, k)  \qquad d\in \LL\otimes \QQ, 
\quad k \in \QQ^m 
\]
under the splitting 
$\LL^S \otimes \QQ \cong (\LL \otimes \QQ) \oplus \QQ^{m}$ 
in \eqref{eq:LS_splitting}. 
If $\lambda \in \Lambda E^S\subset \LL^S \otimes \QQ$, 
we have $k\in (\ZZ_{\ge 0})^m$ and $d \in \NE(\X(\bSigma)) 
\cap H_2(X(\bSigma),\ZZ)$. 
In this case we write 
\[
\tilde{Q}^\lambda = Q^d x^k = Q^d x_1^{k_1} \cdots x_m^{k_m} 
\in \Lambda_{\rm nov}^\TT [\![x]\!] 
\]
where $\Lambda_{\rm nov}^\TT$ is the $\TT$-equivariant 
Novikov ring \eqref{eq:Tequiv_Novikov} and 
$x =(x_1,\dots,x_m)$ are variables. 
We call $\tilde{Q} =(Q,x)$ the \emph{$S$-extended Novikov variables}. 
\end{notation} 

\begin{defn}
  Let $\bSigma=(N,\Sigma,\rho)$ be a stacky fan, 
and let $S$ be a finite set equipped with a map 
$S \to N_\Sigma$. Set $m= |S|$ and regard 
$\LL^S\otimes \QQ$ as a subspace of $\QQ^{n+m}$. 
The {\em $S$-extended $\TT$-equivariant $I$-function} 
of $\X(\bSigma)$ is:
\begin{equation}
\label{eq:what_is_I}
I_{\X(\bSigma)}^S(\tilde{Q},z):=
ze^{\sum_{i=1}^n u_i t_i/z}
\sum_{b\in \Box(\bSigma)}
\sum_{\lambda\in \Lambda E^S_b} 
\tilde{Q}^\lambda e^{\lambda t} 
\left(
\prod_{i=1}^{n+m}
\frac{\prod_{\<a\>=\< \lambda_i \>, a \leq 0}(u_i+a z)}
{\prod_{\<a \>=\< \lambda_i \>, a\leq \lambda_i} (u_i+a z)} 
\right) 
y^b 
\end{equation}
\end{defn}
Some explanations are in order:
\begin{enumerate}
\item the summation range $\Lambda E^S_b\subset \LL^S \otimes \QQ$ 
was introduced in Definition~\ref{defn:reduction_function}. 
\item for each $\lambda \in \Lambda E^S_b$, we write 
$\lambda_i$ for the $i$th component of $\lambda$ 
as an element of $\QQ^{n+m}$. We have $\<\lambda_i\> = \hat{b}_i$ 
for $1\le i\le n$ and $\<\lambda_i\>=0$ for $n+1\le i\le n+m$. 
\item $u_i:=0$ if $n+1 \le i\le n+m$. For $i=1,\ldots,n$, $u_i$ is the
  $\TT$-equivariant first Chern class of the line bundle discussed in
  Section~\ref{sec:equiv_obj}.
\item $y^b$ is the identity class supported on the twisted sector
  $I\X(\bSigma)_b$ associated to $b\in \Box(\bSigma)$; see
  Section~\ref{sec:CR_coh}.
\item $t=(t_1,\dots,t_n)$ are variables, and $e^{\lambda t}:=\prod_{i=1}^n
  e^{(D_i\cdot d) t_i}$. 
\end{enumerate}
$I^S_{\X(\bSigma)}(\tilde{Q},z)$ is a formal power series in $Q$, $x$,
$t$ with coefficients in the localized equivariant Chen--Ruan
cohomology $H_{\CR,\TT}^\bullet(\X) \otimes_{R_\TT} S_{\TT\times
  \CC^\times}$, i.e.
\[
I^S_{\X(\bSigma)}(\tilde{Q},z) 
\in H_{\CR,\TT}^\bullet(\X) \otimes_{R_\TT} 
S_{\TT \times \CC^\times}[\![\NE(X) \cap H_2(X,\ZZ)]\!][\![x,t]\!] 
\]
where $S_{\TT\times \CC^\times} = \Frac(H_{\TT\times
  \CC^\times}^\bullet({\rm pt}))$ and $z$ is identified with the
$\CC^\times$-equivariant parameter; see
Remark~\ref{rmk:rational_Givental}.

\begin{defn}
  If $\bSigma=(N,\Sigma,\rho)$ and $S$ are as above and the
  coarse moduli space of $\X(\bSigma)$ is projective, then we
  define the {\em $S$-extended (non-equivariant) $I$-function} of
  $\X(\bSigma)$ by the same equation \eqref{eq:what_is_I}, but
  with $u_i$, $1 \leq i \leq n$, replaced by the non-equivariant first
  Chern class $D_i$. 
\end{defn}

\begin{rmk} 
\label{rmk:lambda_range}
One can replace the summation range $\Lambda E^S_b$ in the formula
\eqref{eq:what_is_I} with $\Lambda^S_b$ without changing the
$I$-function.  This is because the summand for $\lambda \in
\Lambda^S_b$ contains a factor $(\prod_{i: \lambda_i\in \ZZ_{<0}} u_i
) y^b$ which vanishes unless $\{\bar{\rho}_i : \lambda_i\in \ZZ_{<0}
\text{ or } \lambda_i \notin \ZZ\}$ spans a cone; in particular the
summand for $\lambda$ automatically vanishes unless $\lambda$ lies in
$\NE^S(\X(\bSigma))$.
\end{rmk}

The main result of this paper is:

\begin{thm}[(Toric Mirror Theorem)]
\label{I_is_on_the_cone}
Let $\bSigma=(N,\Sigma,\rho)$ be a stacky fan giving rise to a smooth
toric Deligne--Mumford stack $\X(\bSigma)$ with semi-projective coarse
moduli space, and let $S$ be a finite set equipped with a map $S \to
N_\Sigma$.  The $S$-extended $\TT$-equivariant $I$-function
$I^S_{\X(\bSigma)}(\tilde{Q},-z)$ is a $\Lambda_{\rm
  nov}^\TT[\![x,t]\!]$-valued point of the Lagrangian cone
$\sL_{\X(\bSigma)}$ for the $\TT$-equivariant Gromov--Witten theory of
$\X(\bSigma)$.
\end{thm}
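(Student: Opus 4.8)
The plan is to follow Givental's classical strategy, now made available in the orbifold setting by the two recent advances cited in the introduction: Brown's recursive characterization of the Lagrangian cone (extended to toric stacks in Section~\ref{sec:toric_lag_cone}) and Liu's virtual localization formula \cite[Theorem~9.32]{ccliu}. The idea is that the criterion from Section~\ref{sec:toric_lag_cone} describes $\sL_{\X(\bSigma)}$ purely in terms of (i) which poles in $z$ a point of $\sH_{\rm rat}$ is allowed to have, with prescribed residues governed by recursion data coming from the edges $\sigma|\sigma'$ of the fan, and (ii) a condition at $z=\infty$ that the principal part matches a point of the \emph{twisted} Lagrangian cone $\sL_\tw$ at each $\TT$-fixed point $\X(\bSigma)_\sigma$. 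So the whole proof of Theorem~\ref{I_is_on_the_cone} reduces to verifying that the restriction $I^S_{\X(\bSigma)}(\tilde Q,-z)|_{\sigma}$ of the $I$-function to each fixed point has exactly these two properties.

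First I would analyze the poles of $I^S_{\X(\bSigma)}$ in $z$. Restricting to a fixed point $\X(\bSigma)_\sigma$ replaces $u_i$ by the character $u_i(\sigma)$, and the hypergeometric factor $\prod_i \bigl(\prod_{\langle a\rangle=\langle\lambda_i\rangle,\,a\le 0}(u_i+az)\big/\prod_{\langle a\rangle=\langle\lambda_i\rangle,\,a\le\lambda_i}(u_i+az)\bigr)$ becomes a genuine rational function of $z$. Its poles occur at $z=-u_j(\sigma)/a$ for various $a$, i.e.\ precisely along the ``wall'' directions indexed by $j$ with $\sigma|\sigma'$. I would compute the residue at such a pole and identify it, via the bijection of Lemma~\ref{lem:addition_map} (the map $\lambda'\mapsto\lambda'+l(c,\sigma,j)$ sending $\Lambda^S_{b'}$ to $\Lambda^S_b$), with a constant multiple of the restriction $I^S_{\X(\bSigma)}|_{\sigma'}$ at the neighbouring fixed point. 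The combinatorial content here is exactly Propositions~\ref{prop:one-dimensional} and~\ref{prop:what_is_c}, Definition~\ref{defn:fixed_curve_deg}, and the splitting of Novikov variables $\tilde Q^\lambda=Q^dx^k$: the degree shift $l(c,\sigma,j)$ and the box-element bookkeeping $\hat b\leftrightarrow b'$ encoded in \eqref{eq:def_b'} are designed to make the recursion coefficient come out as the localization contribution of the fixed orbi-curve $\PP_{r_1,r_2}\to\X(\bSigma)$ of degree $l(c,\sigma,j)$.

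Second, I would examine the behaviour at $z=\infty$, i.e.\ expand $I^S_{\X(\bSigma)}|_\sigma$ in powers of $z^{-1}$. The leading term is $-1\cdot z+ O(1)$ after the dilaton shift, so the point lies in the correct formal neighbourhood, and the remaining task is to show that the full asymptotic expansion is the $J$-function of the twisted theory at $\X(\bSigma)_\sigma$ — equivalently, a point of $\sL_\tw$ — for a suitable input $\bt(z)$. Here the $\TT$ acts trivially on the fixed point, the normal bundle $N_\sigma$ decomposes into characters $u_i(\sigma)$, and the twisting class is $e_\TT^{-1}$ of this normal bundle; the relevant $S$-extended $I$-function for a point with this twist is a product over $i\in\sigma$ of Gamma-type hypergeometric series, which one recognizes (by the weighted-projective-space / $[\CC^n/G]$ mirror theorems, or directly) as lying on $\sL_\tw$. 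This step also requires checking the mild input condition $\bt|_{x=Q=0}=0$ from \eqref{eq:hot}, which holds because the $\lambda=0$, $b=0$ term of the $I$-function is just $ze^{\sum u_it_i/z}$ and contributes only the dilaton-shifted vertex plus the ancestor input $\sum u_it_i$.

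The main obstacle is the residue computation in the first step: matching the residue of the hypergeometric factor at $z=-u_j(\sigma)/a$ with Liu's localization contribution of the fixed locus of $\overline{\M}_{0,n}(\X(\bSigma),d)$ consisting of maps that contract everything except one orbi-rational component mapping to the torus-invariant curve joining $\X(\bSigma)_\sigma$ and $\X(\bSigma)_{\sigma'}$. One must keep careful track of (a) the automorphisms of $\PP_{r_1,r_2}$ and of the node, which produce the order factors $r_1,r_2$ and $r=\lcm(r_1,r_2)$ appearing in Proposition~\ref{prop:one-dimensional}; (b) the $\bpsi$-classes at the node, which are responsible for turning the geometric series $\phi_\alpha/(-z-\bpsi)$ into the simple pole structure; and (c) the gerbe/section data at the markings that distinguishes $\hat b$ from $b$ and forces the asymmetric convention recorded in Remark~\ref{rmk:determined}. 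Once the recursion coefficient is shown to equal this localization contribution, verifying that $I^S_{\X(\bSigma)}$ satisfies the criterion of Section~\ref{sec:toric_lag_cone} — and hence Theorem~\ref{I_is_on_the_cone} — is essentially formal.
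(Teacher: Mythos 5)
Your overall strategy is the paper's: Theorem~\ref{I_is_on_the_cone} is proved by checking that $I^S_{\X(\bSigma)}(\tilde{Q},-z)$ satisfies conditions (C1)--(C3) of the characterization Theorem~\ref{thm:char_of_cone}, and your first step (pole analysis, then the residue recursion via the shift $\lambda=\lambda'+l(c,\sigma,j)$ of Lemma~\ref{lem:addition_map}) is exactly Propositions~\ref{prop:I_condition_1} and~\ref{prop:recursion_for_I}. The genuine problem is in your second step: you propose to expand $I^S_\sigma$ in powers of $z^{-1}$ at $z=\infty$ and match the asymptotics with a point of $\sL_\sigma^\tw$. Condition (C3) is a statement about the Laurent expansion at $z=0$, not at $z=\infty$: the twisted theory of the fixed point has its Givental space built from $S_\TT(\!(z)\!)$ (finitely many \emph{negative} powers of $z$), because the $\bpsi$-classes on $\Mbar_{0,n}(BG_v)$ are nilpotent while $e_{\TT}^{-1}$ of the twisting bundle contributes unboundedly many positive powers of $z$. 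The expansion of $\fb_\sigma$ at $z=\infty$ carries no information beyond membership in $\sH$ together with (C1)--(C2); the extra datum that pins $\fb$ down --- and that drives the degree-by-degree reconstruction in the converse direction of Theorem~\ref{thm:char_of_cone} --- is the behaviour near $z=0$, reached by analytic continuation of the rational coefficient functions. As literally stated, your step (ii) verifies nothing new and the argument does not close.

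Once reoriented to $z=0$, the remaining content is Proposition~\ref{prop:restriction_for_I}. Your suggestion to ``recognize'' the fixed-point restriction as lying on $\sL_\sigma^\tw$ by appeal to the weighted projective space or $[\CC^n/G]$ mirror theorems is workable only where those results are independently established (several of them are special cases of the theorem being proved); the paper instead gives a self-contained argument: the restriction is the hypergeometric modification of the Jarvis--Kimura $J$-function of $BN(\sigma)$, which is carried onto the twisted cone by the operator $\exp\bigl(-\sum_{j\in\sigma}G_0^{(j)}(z\theta_j,z)\bigr)$ followed by Tseng's quantum Riemann--Roch operator $\Delta_\bs$, after which one specializes the parameters $s_k^{(j)}$ to recover the $e_{\TT}^{-1}$-twist. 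One further point of bookkeeping: in the paper the residue identity for the $I$-function is a purely hypergeometric computation (Lemma~\ref{lem:useful_things}); Liu's localization formula enters only in the proof of Theorem~\ref{thm:char_of_cone} itself, where $\RC(c)_{(\sigma,b)}^{(\sigma',b')}$ is identified with the edge contribution. Your ``main obstacle'' paragraph conflates these two steps, although the matching you describe is indeed the content of Definition~\ref{defn:recursion_coefficient} versus \cite[Lemma~9.25]{ccliu}.
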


\begin{cor}
  \label{cor:non-equivariant}
  Suppose that $\bSigma=(N,\Sigma,\rho)$ and $S$ are as in
  Theorem~\ref{I_is_on_the_cone}, and that the coarse moduli space of
  $\X(\bSigma)$ is projective.  Then the $S$-extended
  non-equivariant $I$-function of $\X(\bSigma)$ is a 
  $\Lambda_{\rm nov}[\![x,t]\!]$-valued point of 
  the Lagrangian cone $\sL_{\X(\bSigma)}$ for the
  non-equivariant Gromov--Witten theory of $\X(\bSigma)$.
\end{cor}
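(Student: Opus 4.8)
The plan is to derive Corollary~\ref{cor:non-equivariant} from Theorem~\ref{I_is_on_the_cone} by passing to the non-equivariant limit, i.e.\ by specializing the equivariant parameters $\chi_1,\dots,\chi_d\in H^2_\TT(\mathrm{pt})$ to $0$. The setup is favourable here: since the coarse moduli space $X(\Sigma)$ is projective, the stack $\X(\bSigma)$ and all the moduli stacks $\Mbar_{0,n}(\X(\bSigma),d)$ are proper, so no virtual localization is needed and the $\TT$-equivariant Gromov--Witten invariants, the $\TT$-equivariant orbifold Poincar\'e pairing, and hence the equations cutting out $\sL_{\X(\bSigma)}$, are all defined already over $R_\TT=H^\bullet_\TT(\mathrm{pt})$ rather than over its fraction field $S_\TT$. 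Under the augmentation $R_\TT\to\CC$, $\chi_i\mapsto 0$, one has $H^\bullet_{\CR,\TT}(\X(\bSigma))\otimes_{R_\TT}\CC\cong H^\bullet_{\CR}(\X(\bSigma))$, the equivariant classes $u_i$ specialize to the $D_i$, the equivariant pairing specializes to the non-equivariant one (so that an equivariant dual basis, regular at $\chi=0$ by nondegeneracy of the limiting Gram matrix, specializes to a non-equivariant dual basis), and each equivariant descendant invariant specializes to the corresponding non-equivariant invariant.

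First I would verify that the equivariant $I$-function $I^S_{\X(\bSigma)}(\tilde Q,z)$ is itself regular at $\chi=0$, i.e.\ that no denominators in the $\chi_i$ appear in its Laurent expansion at $z=\infty$. This is immediate from the explicit formula~\eqref{eq:what_is_I}: for each $\lambda$ the per-index hypergeometric factor is, after the cancellation of the infinitely many linear factors common to its numerator and denominator, a ratio of two finite products of linear forms $u_i+az$ in which the denominator only ever involves $a>0$; hence its expansion at $z=\infty$ has coefficients polynomial in $u_i$, and the prefactor $z\,e^{\sum u_i t_i/z}$ is likewise a power series in $z^{-1}$ with polynomial coefficients. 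Consequently $I^S_{\X(\bSigma)}(\tilde Q,z)$ lies over $R_\TT$, and $I^S_{\X(\bSigma)}(\tilde Q,z)\big|_{\chi=0}$ is precisely the $S$-extended non-equivariant $I$-function obtained by replacing each $u_i$ by $D_i$; the same computation, now using nilpotence of the $D_i$, re-confirms that the non-equivariant $I$-function makes sense as an element of the non-equivariant Givental space.

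Next I would show that the witness $\bt^\TT(z)\in\sH_+[\![x,t]\!]$ provided by Theorem~\ref{I_is_on_the_cone}, for which $I^S_{\X(\bSigma)}(\tilde Q,-z)$ equals the expression~\eqref{eq:very_big_J_function} with $\tilde Q$ in place of $Q$, is also regular at $\chi=0$. The point is that the tail sum $\sum_{n,d,\alpha}\tfrac{\tilde Q^\lambda}{n!}\langle\cdots,\phi_\alpha/(-z-\bpsi)\rangle^\TT\phi^\alpha$ lies in $\sH_-$ while $-1z$ and $\bt^\TT(z)$ lie in $\sH_+$, so $\bt^\TT(z)=\big[I^S_{\X(\bSigma)}(\tilde Q,-z)\big]_++1z$, where $[\,\cdot\,]_+$ is the projection $\sH\to\sH_+$ along $\sH_-$; this projection is $R_\TT$-linear and commutes with $\chi=0$, so $\bt^\TT$ is $\chi$-regular and $\bt(z):=\bt^\TT(z)\big|_{\chi=0}$ lies in the non-equivariant $\sH_+[\![x,t]\!]$ and satisfies the required vanishing at $x=t=Q=0$. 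Specializing $\chi=0$ in the identity~\eqref{eq:very_big_J_function} for $I^S_{\X(\bSigma)}$ — using the compatibilities recorded in the first paragraph — then exhibits $I^S_{\X(\bSigma)}(\tilde Q,-z)\big|_{\chi=0}$ in the form~\eqref{eq:very_big_J_function} for the non-equivariant theory, with witness $\bt(z)$; that is, it is a $\Lambda_{\rm nov}[\![x,t]\!]$-valued point of $\sL_{\X(\bSigma)}$, which is the assertion of Corollary~\ref{cor:non-equivariant}.

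I expect the only genuine subtlety, and hence the step to be most careful about, to be the legitimacy of the limit itself: a priori $\sL_{\X(\bSigma)}$ and $I^S_{\X(\bSigma)}$ in Theorem~\ref{I_is_on_the_cone} are formulated over $S_\TT$, where ``set $\chi=0$'' is meaningless, so one really does need both regularity checks above (the $I$-function, and with it its $\sH_+$-projection $\bt^\TT$, lie over $R_\TT$) together with properness of the moduli stacks (which places the defining data of $\sL_{\X(\bSigma)}$ over $R_\TT$) to make the specialization well-defined. Granting those, the remaining verifications are routine.
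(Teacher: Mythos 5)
Your argument is correct and is exactly the paper's proof: the paper disposes of the corollary in one line by observing that projectivity of the coarse moduli space makes the non-equivariant objects well-defined and then ``passing to the non-equivariant limit'' in Theorem~\ref{I_is_on_the_cone}. Your write-up simply supplies the (accurate) regularity-at-$\chi=0$ checks for the $I$-function, its $\sH_+$-projection, and the defining data of $\sL_{\X(\bSigma)}$ that justify this limit.
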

\begin{proof}
  Since the coarse moduli space of $\X(\bSigma)$ is
  projective, the non-equivariant Chen--Ruan cohomology, $S$-extended
  non-equivariant $I$-function of $\X(\bSigma)$, and
  non-equivariant Gromov--Witten theory of $\X(\bSigma)$ are
  well-defined. Pass to the non-equivariant limit in
  Theorem~\ref{I_is_on_the_cone}.
\end{proof}

\begin{rmk}
  Theorem~\ref{I_is_on_the_cone} and
  Corollary~\ref{cor:non-equivariant} take a particularly simple form
  when the pair $(\X(\bSigma), S)$ is {\em weak Fano}. Roughly
  speaking, in this case the $S$-extended $I$-function
  $I^S_{\X(\bSigma)}$ coincides with (a suitable restriction
  of) the $J$-function of $\X(\bSigma)$. See \cite[Section
  4.1]{ir} for more details.
\end{rmk}

\begin{rmk}
  The non-extended $I$-function (i.e.~the $S$-extended $I$-function
  with $S = \varnothing$) typically only determines the restriction of
  the $J$-function to the ``very small parameter space'' $H^2(\X;\CC)
  \subset H^2_\CR(\X;\CC)$. Taking $S$ to be non-trivial in
  Theorem~\ref{I_is_on_the_cone} and
  Corollary~\ref{cor:non-equivariant}, however, in practice often
  allows one to determine the $J$-function along twisted sectors too.
  But it is convenient to take $S$ not to be too large---not equal to
  the whole of $\Box(\X)$, for example---as otherwise we may lose
  control over the asymptotics of the $I$-function.  We
  will elaborate on these points elsewhere.
\end{rmk}

\begin{rmk} 
  The $S$-extended $I$-function arises from Givental's heuristic
  argument \cite{Givental:homological} applied to the polynomial loop
  spaces (toric map spaces) associated to the $S$-extended quotient
  construction \eqref{eq:extended_quotient} of $\X(\bSigma)$.  See
  \cite{Givental:toric, Vlassopoulos, Iritani:eqf, cclt} for
  closely-related discussions.
\end{rmk}

The remainder of this paper contains a proof of
Theorem~\ref{I_is_on_the_cone}. We first give a criterion, in
Theorem~\ref{thm:char_of_cone}, that characterizes points on the
Lagrangian cone $\sL_{\X(\bSigma)}$.  We then show, in
Section~\ref{sec:pf_mir_thm}, that the $S$-extended $I$-function
$I^S_{\X(\bSigma)}$ satisfies the criterion in
Theorem~\ref{thm:char_of_cone}.

\section{Lagrangian Cones in the Toric Case}
\label{sec:toric_lag_cone}

Let $\X = \X(\bSigma)$ be the toric Deligne--Mumford stack
associated to a stacky fan $\bSigma=(N, \Sigma, \rho)$, as in
Section~\ref{sec:toric_basics}.  
In this Section we characterize those
points of $\sH$ which lie on Givental's Lagrangian cone $\sL_\X$ 
associated to $\TT$-equivariant Gromov--Witten theory of $\X(\bSigma)$ 
(see Section~\ref{sec:Givental_formalism}). 
Recall that the $\TT$-fixed points of $\X(\bSigma)$ are in bijection
with top-dimensional cones of $\Sigma$: given a top-dimensional cone
$\sigma\in \Sigma$, we have a fixed point
\[
\X(\bSigma/\sigma) \cong \X(\bSigma)_\sigma \subset
\X(\bSigma).
\]
Note that $\X(\bSigma/\sigma)\simeq BN(\sigma)$, where
$N(\sigma):= N/N_\sigma$ and $N_\sigma\subset N$ is the subgroup
generated by $\rho_i, i\in \sigma$.  

\begin{notation} 
For a top-dimensional cone $\sigma \in \Sigma$, we write 
$T_\sigma \X(\bSigma)$ for the tangent space at the $\TT$-fixed 
point $\X(\bSigma)_\sigma$. This is a $\TT$-equivariant 
vector bundle over $\X(\bSigma)_\sigma \cong B N(\sigma)$. 
\end{notation} 

\begin{notation}
Let $\sigma \in \Sigma$ be a top-dimensional cone.  
We write $\sH_\sigma$ for Givental's symplectic 
vector space associated to the $\TT$-fixed point $\X(\bSigma)_\sigma$.  
We also write $\sH_\sigma^\tw$ and $\sL_\sigma^\tw$ 
for the symplectic vector space and Lagrangian cone corresponding 
to the Gromov--Witten theory of $\X(\bSigma)_\sigma$, 
twisted by the vector bundle $T_\sigma \X(\bSigma)$ and 
the $\TT$-equivariant inverse Euler class $e_\TT^{-1}$. 
More precisely: 
\begin{align*} 
\sH_\sigma & :=  H_{\CR}(\X(\bSigma)_\sigma) 
\otimes_\CC S_\TT(\!(z^{-1})\!)[\![\NE(X) \cap H_2(X,\ZZ)]\!] \\ 
\sH_\sigma^\tw & := H_{\CR}(\X(\bSigma)_\sigma) 
\otimes_\CC S_\TT(\!(z)\!) [\![\NE(X)\cap H_2(X,\ZZ)]\!] 
\end{align*}
See Sections~\ref{sec:Givental_formalism} and~\ref{sec:twisted}.
Although there are no Novikov variables for the stacky point
$\X(\bSigma)_\sigma$, we define $\sH_\sigma$, $\sH_\sigma^\tw$ over
the Novikov ring of $\X(\bSigma)$ by extending scalars.
\end{notation}

\begin{notation}
  By the Atiyah--Bott localization theorem, we have the isomorphism
\begin{align}
    \label{eq:localization_isomorphism}
    H^\bullet_{\CR,\TT}(\X(\bSigma)) 
    \otimes_{R_\TT} S_\TT 
    & \simeq \bigoplus_{\sigma\in \Sigma: \text{ top-dimensional}}
    H^\bullet_{\CR}(\X(\bSigma)_\sigma) 
    \otimes_{\CC} S_\TT 
    \intertext{given by restricting to $\TT$-fixed points, and thus
      an isomorphism of vector spaces:}
    \sH & \simeq \bigoplus_{\sigma\in \Sigma:
      \text{ top-dimensional}} \sH_\sigma 
    \notag
\end{align} 
Under this isomorphism, the symplectic form on $\sH$ 
corresponds to the direct sum of $\eT^{-1}$-twisted symplectic forms
on $\bigoplus_\sigma \sH_\sigma$. 
For $\fb \in \sH$ and $\sigma \in \Sigma$ a top-dimensional cone, we
  write $\fb_\sigma\in \sH_\sigma$ for the component of $\fb$
  along $\sH_\sigma \subset \sH$.  
Thus $\fb_\sigma$ is the restriction of $\fb$ to the 
inertia stack $I\X(\bSigma)_\sigma$ of the $\TT$-fixed point
$\X(\bSigma)_\sigma$. 
We write $\fb_{(\sigma,b)}$ for the restriction of 
$\fb_\sigma$ to the component $I\X(\bSigma)_{\sigma,b}$ of 
$I\X(\bSigma)_\sigma$ corresponding to $b\in \Box(\sigma)$. 
The component $I\X(\bSigma)_{\sigma,b}$ is contained in 
both $I\X(\bSigma)_\sigma$ and $I\X(\bSigma)_b$. 
\end{notation}

\begin{defn}[(Recursion Coefficient)]
  \label{defn:recursion_coefficient}
  Let $\bSigma=(N,\Sigma,\rho)$ be a stacky fan, and let
  $\sigma$,~$\sigma' \in \Sigma$ satisfy $\sigma|\sigma'$.  Let $j$ be
  as in Notation~\ref{notation:what_is_j}.  Fix $b \in \Box(\sigma)$,
  and let $c$ be a positive rational number such that $\langle c
  \rangle = \hat{b}_j$ with $\hat{b} = \inv(b)$.  The \emph{recursion
    coefficient} associated to $(\sigma, \sigma', b, c)$ is the
  element of $S_\TT = \Frac(H_\TT^\bullet({\rm pt})) \cong
  \CC(\chi_1,\dots,\chi_d)$ given by:
  \[
  \RC(c)_{(\sigma,b)}^{(\sigma',b')} 
  := \frac{1}{c} 
  \left(\prod_{i\in \sigma : b_i=0}u_i(\sigma)\right)
  \frac{(\frac{c}{u_j(\sigma)})^{\lfloor c \rfloor}}{\lfloor c \rfloor !} 
  \frac{({-\frac{c}{u_j(\sigma)}})^{\lfloor c' \rfloor}}{\lfloor c' \rfloor !} 
  \prod_{i\in \sigma\cap \sigma'}
  \frac{\prod_{\<a\>=\hat{b}_i, a< 0}
    (u_i(\sigma)+u_j(\sigma) \frac{a}{-c})}
  {\prod_{\<a\>=\hat{b}_i, a\leq c_i}
    (u_i(\sigma)+u_j(\sigma)\frac{a}{-c})}
  \]   
  where $c', c_i$ are as in Definition~\ref{defn:what_is_l_c_j},
  i.e.~$c'= D_{j'}\cdot l(c,\sigma,j)$ and $c_i = D_i \cdot
  l(c,\sigma,j)$ for $i\in \sigma \cap \sigma'$.
\end{defn}

\begin{rmk}
The recursion coefficient $\RC(c)_{(\sigma,b)}^{(\sigma',b')}$
depends only on $\sigma$,~$\sigma'$,~$b$, and~$c$.  The box element
$b'\in \Box(\sigma')$ is determined by these data, 
via Remark~\ref{rmk:determined}.
\end{rmk}

\begin{thm}\label{thm:char_of_cone}
Let $\X = \X(\bSigma)$ be a smooth toric Deligne--Mumford stack
associated to a stacky fan $\bSigma=(N, \Sigma, \rho)$. 
Let $x =(x_1,\dots,x_m)$ be formal variables. 
Let $\fb$ be an element of $\sH[\![x]\!]$ such that 
$\fb|_{Q=x=0} = -1 z$. 
Then $\fb$ is a $\Lambda_{\rm nov}^\TT[\![x]\!]$-valued point of 
$\sL_\X$ if and only if the following three conditions hold: 

  \begin{enumerate}
  \item[(C1)] For each top-dimensional cone $\sigma \in \Sigma$ and
    each $b\in \Box(\sigma)$, the restriction $\fb_{(\sigma,b)}$ is a
    power series in $Q$ and $x$ such that each coefficient of this
    power series is an element of $S_{\TT\times \CC^\times} \cong
    \CC(\chi_1,\dots,\chi_d,z)$ and, as a function in $z$, it is
    regular except possibly for a pole at $z=0$, a pole at $z=\infty$, and
    simple poles at:
      \[
      \Big\{ 
      \textstyle
      \frac{{u_j}(\sigma)}{c} : 
      \text{$\exists \sigma' \in \Sigma$ 
such that $\sigma|\sigma'$ and $j \in \sigma \setminus \sigma'$, 
        $c>0$ is such that $\langle c \rangle =
        \hat{b}_j$} \Big\}
      \]
      Here we use Notation~\ref{notation:what_is_j}.
\item[(C2)] The residues of $\fb_{\sigma,b}$ at the simple poles satisfy the
      following recursion relations: given any $\sigma$,~$\sigma'\in
      \Sigma$ such that $\sigma|\sigma'$, 
$b\in \Box(\sigma)$ and $c>0$ with $\<c\> = \hat{b}_j$,  we have:
      \[
      \Res_{z= \frac{u_j(\sigma)}{c}} \fb_{(\sigma, b)}(z) \, dz=
      {-  Q^{l(c,\sigma, j)} } \,
      \RC(c)_{(\sigma, b)}^{(\sigma', b')} \,
      \fb_{(\sigma', b')}(z)\Bigr |_{z=\frac{u_j(\sigma)}{c}}. 
      \]
Here we use Notation~\ref{notation:what_is_j}, 
Definition~\ref{defn:what_is_l_c_j} and 
Definition~\ref{defn:recursion_coefficient}.  

  \item[(C3)] The Laurent expansion of 
$\fb_\sigma$ at $z=0$ is a $\Lambda_{\rm nov}^\TT[\![x]\!]$-valued 
point of $\sL_\sigma^\tw$.
  \end{enumerate}
\end{thm}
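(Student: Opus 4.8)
\textbf{Proof proposal for Theorem~\ref{thm:char_of_cone}.}

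The plan is to combine the Atiyah--Bott localization isomorphism \eqref{eq:localization_isomorphism} with the recursive characterization of Lagrangian cones of toric bundles due to Brown \cite{brown}, applied fixed-point-by-fixed-point. The key structural observation is that, under the isomorphism $\sH\cong\bigoplus_\sigma \sH_\sigma$, the symplectic form on $\sH$ is the direct sum of the $\eT^{-1}$-twisted symplectic forms on the $\sH_\sigma$, where the twist is by the tangent bundle $T_\sigma\X(\bSigma)$; thus an element $\fb\in\sH[\![x]\!]$ lies on $\sL_\X$ precisely when its restrictions $\fb_\sigma$ to the $\TT$-fixed loci are suitably compatible. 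The direction ``$\fb$ on $\sL_\X$ $\Rightarrow$ (C1), (C2), (C3)'' should be extracted from Liu's virtual localization formula \cite[Theorem~9.32]{ccliu}: localizing the integrals defining the big $J$-function \eqref{eq:very_big_J_function} on $\Mbar_{0,n+1}(\X,d)$ expresses each $\TT$-fixed component as a product of vertex and edge contributions; the vertex contributions over the fixed point $\X(\bSigma)_\sigma$ assemble into a twisted descendant potential, which gives (C3), while the edge contributions connecting $\X(\bSigma)_\sigma$ to an adjacent fixed point $\X(\bSigma)_{\sigma'}$ produce exactly the simple poles of (C1) and the residue recursion of (C2). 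The edge factor for a $\TT$-fixed rational curve joining the two fixed points is governed by Proposition~\ref{prop:what_is_c} and Definition~\ref{defn:fixed_curve_deg}: such curves are the $\PP_{r_1,r_2}$'s, parametrized by a positive rational $c$ with $\langle c\rangle = \hat b_j$, and the node-smoothing analysis produces the pole at $z = u_j(\sigma)/c$ with residue given by the recursion coefficient $\RC(c)_{(\sigma,b)}^{(\sigma',b')}$ of Definition~\ref{defn:recursion_coefficient}.

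For the converse---that (C1), (C2), (C3) force $\fb$ onto $\sL_\X$---the argument is the standard ``recursive uniqueness'' scheme. First I would use (C1) to write $\fb_{(\sigma,b)}(z)$ via partial fractions as the sum of its principal parts at the finitely many simple poles $u_j(\sigma)/c$, plus a part regular away from $z=0,\infty$. By (C2) each principal part at $u_j(\sigma)/c$ is determined by $\fb_{(\sigma',b')}$ evaluated at $z = u_j(\sigma)/c$; since passing from $\sigma$ to an adjacent $\sigma'$ strictly decreases some degree-related quantity (for a fixed power of the extended Novikov variables $\tilde Q^\lambda$, the curve class $l(c,\sigma,j)$ is effective and nonzero, so only finitely many edges can be chained), this sets up an induction on the $(Q,x)$-adic filtration. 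At each order, the regular-at-nonzero-$z$ part of $\fb_\sigma$ is pinned down by (C3), because a twisted $\Lambda_{\rm nov}^\TT[\![x]\!]$-point of $\sL_\sigma^\tw$ is, by the analogue of Theorem~\ref{thm:lag_cone} for $\sL_\sigma^\tw$ together with the dilaton shift, uniquely reconstructed from its polar part at $z=0$ (equivalently, from the $\sH_{\sigma,+}^\tw$-component, i.e.\ its value as $z\to\infty$). Running these two inputs together, one recovers $\fb$ uniquely order-by-order from the initial condition $\fb|_{Q=x=0}=-1z$; since $\sL_\X$ is cut out by exactly the same recursion (by the forward implication applied to an arbitrary point of $\sL_\X$, for instance to the $J$-function of Remark~\ref{rmk:jfunction} or to the general point \eqref{eq:very_big_J_function}), the solution $\fb$ must lie on $\sL_\X$. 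Here it is essential, as noted in Remark~\ref{rmk:rational_Givental}, that the relevant $J$-type objects live in the rational space $\sH_{\rm rat}$, so that ``poles in $z$'' and ``residues'' are literally meaningful and the partial-fraction decomposition is legitimate.

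The main obstacle will be the bookkeeping in the forward direction: carefully identifying, in Liu's localization formula, precisely which edge contributions survive, matching the combinatorial data of a $\TT$-fixed stable map (the box elements $b_1,b_2$ at the torus-fixed points of $\PP_{r_1,r_2}$, the non-negative integers $q_1,q_2$, the orders $r_1,r_2$) with the analytic data $(c,c',c_i)$ appearing in $\RC(c)_{(\sigma,b)}^{(\sigma',b')}$, and in particular getting the edge-smoothing denominator $u_j(\sigma)/c - \bpsi$ to produce the stated simple pole at $z = u_j(\sigma)/c$ with the precise prefactor $-Q^{l(c,\sigma,j)}\,\RC(c)_{(\sigma,b)}^{(\sigma',b')}$, signs and factorials included. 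The hypergeometric product $\prod_{i\in\sigma\cap\sigma'} \prod_{\langle a\rangle = \hat b_i}(\cdots)$ in the recursion coefficient comes from the moving part of the deformation/obstruction complex along the edge; pinning down that this equals the Euler class ratio coming from $H^0$ minus $H^1$ of the pullback of the tangent sheaf, twisted appropriately by the orbifold structure at the two stacky limit points, is the delicate computation. Everything else---exactness of the fan and extended fan sequences \eqref{eq:fan_sequences}, the bijections in Lemma~\ref{lem:addition_map} that organize the induction over degrees, the semi-projectivity needed for properness of the $\TT$-fixed loci and for strict convexity of $\NE(\X(\bSigma))$ (which makes the $(Q,x)$-adic induction well-founded)---is routine, and the twisted analogue of Theorem~\ref{thm:lag_cone} supplies the uniqueness statement for (C3) off the shelf.
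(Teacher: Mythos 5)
Your proposal is correct and follows essentially the same route as the paper: the forward implication via Liu's virtual localization formula, with the graph sum split into edge-type contributions (giving the poles of (C1) and the residue recursion (C2)) and vertex-type contributions (assembling into the twisted theory of the fixed point, giving (C3)), and the converse via uniqueness of the solution to the recursion, proved by induction on the $(Q,x)$-degree using effectivity of $l(c,\sigma,j)$ together with the fact that a point of $\sL_\sigma^\tw$ is determined by its $\sH_{\sigma,+}^\tw$-component, then compared with the genuine cone point $\fb_{\GW}$ built from the same inputs $\bt_\sigma$. The only quibble is terminological: the twisted cone point is reconstructed from its \emph{regular} part at $z=0$ (the $\sH_{\sigma,+}^\tw$-component), not its polar part, but you identify the two correctly in the parenthetical so the substance is right.
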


\begin{rmk} 
\label{rmk:regularity} 
Condition (C1) ensures that the right-hand side of the recursion
relation in (C2) is well-defined.  Note that the $\TT$-weights
$\{u_i(\sigma') :i \in \sigma'\}$ of the tangent space $T
_{\sigma'}\X(\bSigma)$ form a basis of $\Lie(\TT)^*$ and all simple
poles of $\fb_{\sigma',b'}(z)$ are contained in the cone $\sum_{i\in
  \sigma'} \RR_{\ge 0} u_i(\sigma')$ by (C1).  On the other hand, if
we take the representable morphism $f\colon \PP_{r_1,r_2} \to
\X(\bSigma)$ associated to $\sigma,c,j,b$ in
Proposition~\ref{prop:what_is_c}, then $u_j(\sigma)/c$ and
$u_{j'}(\sigma')/c'$ are the induced $\TT$-weights of the tangent
spaces at $0$ and $\infty$ of the \emph{coarse} domain curve
$|\PP_{r_1,r_2}|\cong \PP^1$ (see Definition~\ref{defn:what_is_l_c_j}
for $c'$).  Therefore we have $u_j(\sigma)/c = - u_{j'}(\sigma')/c'$
and it follows that $\fb_{\sigma',b'}(z)$ is regular at $z
=u_j(\sigma)/c = -u_{j'}(\sigma')/c'$.
\end{rmk} 
\begin{rmk} 
  Note that (C3) involves analytic continuation: (C1) implies that
  each coefficient of $Q^d x^k$ in $\fb_\sigma(z)$ is a rational
  function in $z, \chi_1,\dots,\chi_d$, and so it makes sense to take
  the Laurent expansion at $z=0$.
\end{rmk} 

\begin{proof}[Proof of Theorem~\ref{thm:char_of_cone}] 
  In outline: Brown's proof for toric bundles \cite[Theorem~2]{brown}
  works for toric Deligne--Mumford stacks too.  In detail: we argue as
  follows.

  Suppose first that $\fb$ is a $\Lambda^{\TT}_{\rm nov}[\![x]\!]$-valued 
point on  $\sL_\X$.  Then
  \begin{equation}
    \label{eq:fb_GW}
    \fb = {-1} z+ \bt(z)+
    \sum_{n=0}^\infty
    \sum_{d\in\NE(\X)}
    \sum_{\alpha}
    \frac{Q^d}{n!}
    \<\bt(\bpsi),\ldots,\bt(\bpsi),\frac{\phi_\alpha}{-z-\bpsi}\>_{0,n+1,d}^\TT
    \phi^\alpha
  \end{equation}
  for some $\bt(z) \in \sH_+[\![x]\!]$ with $\bt|_{Q=x=0} = 0$; 
  here once again we expand the 
  expression $\phi_\alpha/(-z-\bpsi)$ as a power series in
  $z^{-1}$. 
Under the isomorphism \eqref{eq:localization_isomorphism}, 
the identity class 
$1 \in H_{\CR,\TT}^\bullet(\X(\bSigma))$ and $\bt(z) \in \sH_+[\![x]\!]$ 
correspond to  
  \begin{align*}
\bigoplus_{\sigma\in \Sigma: \text{ top-dimensional}} 
1_\sigma && \text{and} && 
\bigoplus_{\sigma\in \Sigma:
      \text{ top-dimensional}} \bt_\sigma(z)
  \end{align*}
where $1_\sigma$ is the identity element in 
$H^\bullet_{\CR,\TT}(\X(\bSigma)_\sigma)$ 
and $\bt_\sigma(z) \in \sH_{\sigma,+}$, 
and we have
  \[
  \fb_\sigma = {-1_\sigma z} + \bt_\sigma(z) + 
  \iota_\sigma^\star \left[
    \sum_{n=0}^\infty
    \sum_{d\in\NE(\X)}
    \sum_{\alpha}
    \frac{Q^d}{n!}
    \<\bt(\bpsi),\ldots,\bt(\bpsi),\frac{\phi_\alpha}{-z-\bpsi}\>_{0,n+1,d}^\TT
    \phi^\alpha
  \right]
  \]
  where $\iota_\sigma \colon \X(\bSigma)_\sigma \to
  \X(\bSigma)$ is the inclusion of the $\TT$-fixed point. 
Furthermore
  \begin{equation}
    \label{eq:f_sigma_b}
    \fb_{(\sigma,b)} = -\delta_{b,0} z + \bt_{(\sigma,b)}(z) + 
    \sum_{n=0}^\infty
    \sum_{d\in \NE(\X)} \frac{Q^{d}}{n!} \left\langle
      \frac{1^{\sigma,b}}{-z-\bpsi},\bt,\ldots,\bt \right\rangle_{0,n+1,d}^\TT
\end{equation}
where $1^{\sigma,b} := |N(\sigma)| e_{\TT}(N_{\sigma,b}) 
1_{\sigma,\hat{b}}$, $N_{\sigma,b}$ is the normal bundle to
  $I\X(\bSigma)_{\sigma,b}$ in $I\X(\bSigma)_b$,
  and $1_{\sigma,\hat{b}}$ is the fundamental class of
  $I\X(\bSigma)_{\sigma,\hat{b}}$ with $\hat{b} = \inv(b)$. 

  We compute the sum in \eqref{eq:f_sigma_b} using localization in
  $\TT$-equivariant cohomology.  Chiu-Chu Melissa Liu has produced a detailed
  and beautifully-written introduction to localization in
  $\TT$-equivariant Gromov--Witten theory of toric stacks
  \cite{ccliu}; we follow her notation closely.  $\TT$-fixed
  strata in the moduli space $\Mbar_{0,n+1}(\X, d)$
  are indexed by decorated trees $\Gamma$, where: 
  \begin{itemize}
  \item Each vertex $v$ of $\Gamma$ is labelled by a
    top-dimensional cone $\sigma_v \in \Sigma$.
  \item Each edge $e$ of $\Gamma$ is labelled by a codimension-$1$
    cone $\tau_e \in \Sigma$ and a positive integer $d_e$.
  \item Each flag\footnote{A flag $(e,v)$ of $\Gamma$ is an
      edge-vertex pair such that $e$ is incident to $v$.}  $(e,v)$ of
    $\Gamma$ is labelled with an element $k_{(e,v)}$ of the isotropy
    group $G_v$ of the $\TT$-fixed point
    $\X(\bSigma)_{\sigma_v}$.
  \item There are markings $\{1,2,\ldots,n+1\}$ and a map $s \colon
    \{1,2,\ldots,n+1\} \to V(\Gamma)$ that assigns markings to
    vertices of $\Gamma$.
  \item The marking $j \in \{1,2,\ldots,n+1\}$ is labelled with an
    element $k_j \in G_v$, where $v = s(j)$.
  \item A number of compatibility conditions hold.
  \end{itemize}
  The compatibility conditions that we require are spelled out in
  detail in \cite[Definition~9.6]{ccliu}; they include, for example,
  the requirement if $(e,v)$ is a flag of $\Gamma$ then the
  $\TT$-fixed point determined by $\sigma_v$ is contained in the
  closure of the $1$-dimensional $\TT$-orbit determined by
  $\tau_e$. We denote the set of all decorated trees satisfying the
  compatibility conditions by $G_{0,n+1}(\X,d)$, so that $\TT$-fixed
  strata in $\Mbar_{0,n+1}(\X, d)$ are indexed by decorated trees
  $\Gamma \in G_{0,n+1}(\X,d)$.

  We rewrite equation \eqref{eq:f_sigma_b} as
  \begin{equation}
    \label{eq:f_sigma_b_as_graph_sum}
    \fb_{(\sigma,b)} = -\delta_{b,0} z + \bt_{(\sigma,b)}(z) + 
    \sum_{n=0}^\infty
    \sum_{d\in \NE(\X)} \frac{Q^d}{n!}
    \sum_{\Gamma \in G_{0,n+1}(\X,d)}
    \Contr_{\sigma,b}(\Gamma)
  \end{equation}
  where $\Contr_{\sigma,b}(\Gamma)$ denotes the contribution to the
  $\TT$-equivariant Gromov--Witten invariant
  \[
  \left\langle\frac{1^{\sigma,b}}{-z-\bpsi},\bt,\ldots,\bt
  \right\rangle_{0,n+1,d}^\TT
  \]
  from the $\TT$-fixed stratum $\mathcal{M}_\Gamma \subset
  \Mbar_{0,n+1}(\X, d)$ corresponding to $\Gamma$.
  We will need some notation for graphs.  For a decorated graph
  $\Gamma \in G_{0,n+1}(\X,d)$, we write:
  \begin{itemize}
  \item $V(\Gamma)$ for the set of vertices of $\Gamma$;
  \item $E(\Gamma)$ for the set of edges of $\Gamma$;
  \item $F(\Gamma)$ for the set of flags of $\Gamma$;
  \item $S_v$ for the set of markings assigned to the vertex $v \in
    V(\Gamma)$:
    \[
    S_v = \big\{ j \in \{1,2,\ldots,n+1\} : s(j) = v \big\}
    \]
  \item $E_v$ for the set of edges incident to the vertex $v \in
    V(\Gamma)$:
    \[
    E_v = \big\{e \in E(\Gamma) : (e,v) \in F(\Gamma)\big\}
    \]
  \item $\val(v) = |E_v| + |S_v|$ for the valence of the vertex $v \in
    V(\Gamma)$.
    \end{itemize}
    Liu \cite[Theorem~9.32]{ccliu} shows that the contribution from
    $\mathcal{M}_\Gamma$ to the Gromov--Witten invariant 
    \[
    \langle
    \gamma_1 \bpsi_1^{a_1},\ldots,\gamma_{n+1}
    \bpsi_{n+1}^{a_{n+1}} \rangle_{0,n+1,d}
    \]
    is:
    \begin{multline}
    \label{eq:Liu_formula}
    c_\Gamma 
    \prod_{e \in E(\Gamma)} 
    \frac{\eT(H^1(\cC_e,f_e^\star T\X)^\mov)}
    {\eT(H^0(\cC_e,f_e^\star T\X)^\mov)} 
    \prod_{(e,v) \in F(\Gamma)} \eT\big((T_{\sigma_v} \X)^{k_{(e,v)}}\big)
    \prod_{v \in V(\Gamma)} \left( \prod_{j : s(j) = v}
      \iota_{\sigma_v}^\star \gamma_j \right) \\
    \prod_{v \in V(\Gamma)} \int_{\left[ 
\Mbar^{\overrightarrow{b(v)}}_{0,\val(v)}(BG_v)\right]^w}
    \frac{\prod_{j \in S_v} \bpsi_j^{a_j}}
    {\prod_{e \in E_v} \big(w_{(e,v)} -
      \bpsi_{(e,v)}/r_{(e,v)}\big)} \cup
    \eT^{-1}\big((T_{\sigma_v}\X)_{0,\val(v),0}\big) 
  \end{multline}
  where:
  \begin{align*}
    & \text{$c_\Gamma = \frac{1}{|\Aut(\Gamma)|}
      \prod_{e \in E(\Gamma)} \frac{1}{(d_e |G_e|)}
      \prod_{(e,v) \in F(\Gamma)} \frac{|G_v|}{r_{(e,v)}}$} \\
    & \text{$G_e$ is the generic stabilizer of the one-dimensional 
    toric substack $\X(\bSigma/\tau_e)$} \\ 
    & \text{$f_e \colon \cC_e \to \X$ is the toric map 
    to the one-dimensional toric substack $\X(\bSigma/\tau_e)$
      determined} \\ 
    & \qquad \text{by the edge $e$ and the decorations 
    $\tau_e,d_e, \{k_{(e,v)}: \text{$v$ is a vertex incident to $e$}\}$} \\ 
    & \text{$H^i(\C_e,f_e^*T\X)^\mov$ denotes 
   the moving part with respect to the $\TT$-action} \\ 
    &\text{$w_{(e,v)} = \eT(T_{\mathfrak{y}(e,v)} \cC_e)$, where
      $\mathfrak{y}(e,v)$ is the marked point on $\cC_e$ determined by
      $(e,v)$} \\
    & \text{$r_{(e,v)}$ is the order of $k_{(e,v)} \in G_v$} \\
    & \text{$\overrightarrow{b(v)}$ is determined by the decorations 
    $k_j$,~$j\in S_v$, and $k_{(e,v)}$,~$e \in E_v$} \\
    & \text{$(T_{\sigma_v}\X)_{0,\val(v),0}$ is the twisting bundle 
    associated to the vector bundle $T_{\sigma_v}\X$ over the $\TT$-fixed} \\ 
   & \qquad \text{point $\X(\bSigma/\sigma_v)$;  
      see Section~\ref{sec:twisted}}\\ 
    & \text{ $\Mbar^{\overrightarrow{b(v)}}_{0,\val(v)}(BG_v)$ 
       is taken to be a point if $\val(v)=1$ or $\val(v)=2$ } 
\end{align*}
The integrals over $\Mbar^{\overrightarrow{b(v)}}_{0,\val(v)}(BG_v)$
here in the unstable cases $\val(v)=1$ and $\val(v)=2$ are defined as
in \cite[(9.12)--(9.14)]{ccliu}.  The twisting bundles
$(T_{\sigma_v}\X)_{0,\val(v),0}$ in the unstable cases $\val(v)=1$ and
$\val(v)=2$ are defined to be $(T_{\sigma_v}\X)^{k_{(e,v)}}$; see the
end of \cite[\S 9.3.4]{ccliu}.

Consider now the graph sum in \eqref{eq:f_sigma_b_as_graph_sum}. Each
graph $\Gamma$ in the sum contains a distinguished vertex $v$ that
carries the first marked point.  We may assume both that $\sigma_v =
\sigma$ and that the label $k_1$ of the first marking is equal to
$\hat{b}$, as otherwise the contribution of $\Gamma$ is zero.  There
are two possibilities: 
\begin{itemize}
\item[(A)] $v$ is $2$-valent;
\item[(B)] $v$ has valence at least $3$.
\end{itemize}
In the first case we say that $\Gamma$ has type A, and in the second
case we say that $\Gamma$ has type B; see Figures~\ref{fig:typeA}
and~\ref{fig:typeB}.  As we will see below, the contributions from
type~A graphs have simple poles at points of the form $u_j(\sigma)/c$
as described in the statement of the Theorem, and the contributions
from type~B graphs are polynomials in $z^{-1}$.  The condition (C1)
then follows.

\begin{figure}[htbp]
\centering
\begin{picture}(400,100) 

\put(30,50){\vector(-2,1){20}}
\put(30,50){\makebox(0,0){$\bullet$}}
\put(30,50){\line(1,0){90}} 
\shade{
\put(150,50){\circle{60}}
}
\put(120,50){\makebox(0,0){$\bullet$}}

\put(10,67){\makebox(0,0){$1$}} 
\put(30,42){\makebox(0,0){$v$}} 
\put(75,42){\makebox(0,0){$e$}} 
\put(115,42){\makebox(0,0){$v'$}} 
\put(70,5){\makebox(0,0){type A graph $\Gamma$}} 

\put(220,70){\makebox(0,0){\scriptsize deleting}}
\put(220,60){\makebox(0,0){\scriptsize the edge $e$}}
\put(220,45){\makebox(0,0){$\Longrightarrow$}} 

\put(300,50){\vector(-1,0){20}}
\shade{ 
\put(330,50){\circle{60}}
} 
\put(300,50){\makebox(0,0){$\bullet$}} 

\put(280,57){\makebox(0,0){$1$}}
\put(295,42){\makebox(0,0){$v'$}} 
\put(300,5){\makebox(0,0){$\Gamma'$}} 
\end{picture} 
\caption{} 
\label{fig:typeA}
\end{figure}

Consider a graph $\Gamma$ of type A.
Let $e \in E(\Gamma)$ be the edge incident to $v$. 
Then $\Gamma$ is obtained from another decorated graph 
$\Gamma'$ by adding the decorated vertex $v$ and 
the decorated edge $e$. See Figure~\ref{fig:typeA}. 
Let $v'$ be the other vertex incident to $e$. 
The graph $\Gamma'$ is assigned the first marking at $v'$ 
instead of the edge $e$. 
The map $f_e \colon \cC_e \to \X$
determined by the edge $e$ has $\cC_e
\simeq \PP_{r_{(e,v)},r_{(e,v')}}$, $f_e(0) = \X_{\sigma_v}$, and
$f_e(\infty) = \X_{\sigma_{v'}}$; the restriction 
$f_e|_0 \colon B\mu_{r_{(e,v)}} 
\to \X_{\sigma_v}$ gives $\hat{b} \in
\Box(\sigma_v)$.  
Let $c \in \QQ$ and $b' \in \Box(\sigma_{v'})$ be
the rational number and box element determined by applying
Proposition~\ref{prop:what_is_c} and Remark~\ref{rmk:determined} 
to $f_e$, and write $\sigma' = \sigma_{v'}$. 
Since $\bpsi_1 = -r_{(e,v)} w_{(e,v)}$, we obtain: 
\begin{multline*}
  \Contr_{\sigma,b}(\Gamma)  =
  \frac{c_\Gamma}{c_{\Gamma'}}
  \frac{\eT(H^1(\cC_e,f_e^\star T\X)^\mov)}
  {\eT(H^0(\cC_e,f_e^\star T\X)^\mov)} 
  \eT\big((T_{\sigma} \X)^{k_{(e,v)}}\big) 
  \eT\big((T_{\sigma'} \X)^{k_{(e,v')}}\big) \\
  \int_{\left[\Mbar^{(\hat{b},b)}_{0,2}(BG_v)\right]^w}
  \frac{|N(\sigma_v)| e_{\TT}(N_{\sigma_v,b})}
  {\big({-z} + r_{(e,v)}w_{(e,v)}\big)} 
  \frac{1}
  {\big(w_{(e,v)} - \bpsi_2/r_{(e,v)}\big)} 
  \cup
  \eT^{-1}\big((T_{\sigma_v} \X)^{k_{(e,v)}}\big) \\
  \times
\frac{r_{(e,v')}}{|N(\sigma')| \eT(N_{\sigma',b'})}
 \Contr_{\sigma',b'}(\Gamma')\big|_{z={-r_{(e,v')}} w_{(e,v')}}
\end{multline*}
Calculating the ratio $c_\Gamma/c_{\Gamma'}$ and evaluating the
integral over $\Mbar^{(\hat{b},b)}_{0,2}(BG_v)$ using
\cite[(9.14)]{ccliu} yields:
\[
\Contr_{\sigma,b}(\Gamma)
=   
\frac{|G_v|}{d_e |G_e|}
\frac{\eT(H^1(\cC_e,f_e^\star T\X)^\mov)}
{\eT(H^0(\cC_e,f_e^\star T\X)^\mov)} 
\frac{e_{\TT}(N_{\sigma,b})}
{\big({-z} + r_{(e,v)} w_{(e,v)}\big)} 
\left[ \Contr_{\sigma',b'}(\Gamma')\right]_{z={-r_{(e,v')}} w_{(e,v')}}
\]
Liu has computed the ratio of Euler classes here
\cite[Lemma~9.25]{ccliu}, and in our notation this gives:
\[
\Contr_{\sigma,b}(\Gamma)
=   
\frac{\RC(c)_{(\sigma, b)}^{(\sigma', b')}}
{( -z + u_j(\sigma)/c) } 
\left[ \Contr_{\sigma',b'}(\Gamma')\right]_{z=u_j(\sigma)/c}
\]
where we used $r_{(e,v)} w_{(e,v)} = u_j(\sigma)/c = - r_{(e,v')}
w_{(e,v')}$.  (See Remark~\ref{rmk:Liu_comparison} below for a
detailed comparison between Liu's notation and ours.)  Note that the
degree of the map $f_e \colon \cC_e \to \X$ is $l(c,\sigma,j)$; see
Definition~\ref{defn:what_is_l_c_j}.  Note also that if we hold the
decorated vertex $v$ and the decorated edge $e$ constant (or in other
words, if we hold the map $f_e \colon \cC_e \to \X$ constant) then the
sum of $\Contr(\Gamma')_{\sigma',b'}$ over all compatible trees
$\Gamma'$ is exactly\footnote{We elide a subtle detail here: the
  unstable terms $[-\delta_{b',0} z + \bt_{(\sigma',b')}(z) ]_{z=
    u_j(\sigma)/c}$ in $[\fb_{(\sigma',b')}]_{z=u_j(\sigma)/c}$ arise
  from the graphs $\Gamma$ in the sum such that $\Gamma'$ is unstable,
  with only one vertex and one or two markings attached to it.} the
graph sum that defines $\fb_{(\sigma',b')}$.  Thus the contribution to
\begin{equation*}
  \fb_{(\sigma,b)} = -\delta_{b,0} z + \bt_{(\sigma,b)}(z) + 
  \sum_{n=0}^\infty
  \sum_{d\in \NE(\X)} 
  \sum_{\Gamma \in G_{0,n+1}(\X,d)}
  \frac{Q^d}{n!}
  \Contr_{\sigma,b}(\Gamma)
\end{equation*}
from all graphs $\Gamma$ of type A is:
\begin{equation}
  \label{eq:type_A}
  \sum_{\sigma' : \sigma|\sigma'}
  \sum_{\substack{
      c \in \QQ : c > 0, \\
      \langle c \rangle = \hat{b}_j}}
  Q^{l(c,\sigma,j)}
  \frac{\RC(c)_{(\sigma, b)}^{(\sigma', b')}}
  {(-z + u_j(\sigma)/c)} 
  \left[ \fb_{(\sigma',b')}\right]_{z=u_j(\sigma)/c}
\end{equation}
This proves (C2). 

\begin{figure}[htbp]
\centering
\begin{picture}(400,110)
\put(40,50){\vector(-1,0){20}} 
\put(40,50){\makebox(0,0){$\bullet$}} 
\put(40,50){\line(3,2){60}}
\put(40,50){\line(1,0){60}} 
\put(40,50){\vector(1,-1){30}}  
\put(115,90){\shade{\circle{30}}} 
\put(115,50){\shade{\circle{30}}} 
\put(100,90){\makebox(0,0){$\bullet$}} 
\put(100,50){\makebox(0,0){$\bullet$}} 

\put(40,40){\makebox(0,0){$v$}} 
\put(70,80){\makebox(0,0){$e_1$}} 
\put(70,55){\makebox(0,0){$e_2$}} 
\put(90,95){\makebox(0,0){$v_1'$}} 
\put(92,58){\makebox(0,0){$v_2'$}} 
\put(18,43){\makebox(0,0){$1$}} 
\put(75,20){\makebox(0,0){$k$}} 
\put(80,-5){\makebox(0,0){type B graph $\Gamma$}} 

\put(190,70){\makebox(0,0){\scriptsize decomposing $\Gamma$}}  
\put(190,60){\makebox(0,0){\scriptsize into type A subgraphs}}
\put(190,45){\makebox(0,0){$\Longrightarrow$}} 

\put(270,90){\vector(-1,0){20}} 
\put(270,90){\makebox(0,0){$\bullet$}}
\put(270,90){\line(1,0){60}} 
\put(345,90){\shade{\circle{30}}}
\put(330,90){\makebox(0,0){$\bullet$}} 
\put(270,50){\vector(-1,0){20}} 
\put(270,50){\makebox(0,0){$\bullet$}}
\put(270,50){\line(1,0){60}} 
\put(345,50){\shade{\circle{30}}}
\put(330,50){\makebox(0,0){$\bullet$}} 
\put(270,20){\vector(-1,0){20}}
\put(270,20){\makebox(0,0){$\bullet$}} 
\put(270,20){\vector(2,-1){30}} 

\put(380,90){\makebox(0,0){$\Gamma_2$}} 
\put(380,50){\makebox(0,0){$\Gamma_3$}} 
\put(330,5){\makebox(0,0){$\Gamma_4$}} 

\put(270,99){\makebox(0,0){$v$}} 
\put(270,59){\makebox(0,0){$v$}} 
\put(270,29){\makebox(0,0){$v$}} 
\put(300,95){\makebox(0,0){$e_2$}} 
\put(300,55){\makebox(0,0){$e_3$}} 
\put(323,99){\makebox(0,0){$v_2'$}} 
\put(323,59){\makebox(0,0){$v_3'$}} 
\put(245,90){\makebox(0,0){$1$}} 
\put(245,50){\makebox(0,0){$1$}} 
\put(245,20){\makebox(0,0){$1$}} 
\put(305,5){\makebox(0,0){$k$}} 
\end{picture} 
\caption{} 
\label{fig:typeB} 
\end{figure} 

Write:
\begin{align*}
  & \btau_{(\sigma,b)}(z) := \bt_{(\sigma,b)}(z) + (\text{the quantity in
    equation~\ref{eq:type_A}}) \\
  \intertext{and:}
  & \btau_\sigma(z) := \sum_{b \in \Box(\sigma)} \btau_{(\sigma,b)}(z)
  1_b
\end{align*}
We have that:
\begin{align}
  \fb_\sigma & = \sum_{b \in \Box(\sigma)} \fb_{(\sigma,b)} 1_b
  \notag \\
  \label{eq:just_type_B_left}
  & = {-1_\sigma z} + \btau_\sigma(z) +   
  \sum_{n=0}^\infty
  \sum_{d\in \NE(\X)} 
  \sum_{b \in \Box(\sigma)}
  \sum_{\substack{
      \Gamma \in G_{0,n+1}(\X,d): \\
      \text{$\Gamma$ is of type B}}}
    \frac{Q^d}{n!}
  \Contr_{\sigma,b}(\Gamma) 1_{\sigma,b}
\end{align}
Consider the contribution to \eqref{eq:just_type_B_left} given by the
sum over decorated graphs $\Gamma$ of type B such that the
distinguished vertex $v$ has valence $l$ and that the label $k_1$ of the
distinguished vertex is equal to $\hat{b} \in \Box(\sigma)$.  Each
such graph $\Gamma$ gives contributions of the form
\eqref{eq:Liu_formula}.  We evaluate these contributions by
integrating over all the factors
$\Mbar_{0,\val(v')}^{\vec{b'}}(BG_{v'})$ except that associated with
the distinguished vertex $v$, obtaining an expression of the form\footnote
{Here $\Aut_{\Gamma_2,\dots,\Gamma_l}$ is the subgroup of 
the symmetric group $\mathfrak{S}_{l-1}$ which leaves the 
$(l-1)$-tuple $(\Gamma_2,\dots,\Gamma_{l})$ of decorated 
graphs invariant.}:
\[
\frac{1}{|\Aut_{\Gamma_2,\dots,\Gamma_l}|}
\int_{\Mbar_{0,l}^{\hat{b},b^2,\ldots,b^l}(BG_v)}
\frac{1^{\sigma,b}}{-z-\bpsi_1} \cup
h_2(\bt,\bpsi_2) \cup \cdots \cup h_l(\bt,\bpsi_l) 
\cup \eT^{-1}\big((T_{\sigma_v}\X)_{0,l,0}\big) \, 1_{\sigma,b}
\]
for some elements $b^2,\ldots,b^l \in \Box(\sigma)$ and some
polynomials $h_i(\bt,\bpsi_i)$ in $t_0$,~$t_1$,\ldots, $Q$, and $\bpsi_i$. 
Suppose that $\Gamma$ is obtained from type A subgraphs 
$\Gamma_2,\dots,\Gamma_l$ by joining them at the 
distinguished vertex $v$, as in Figure~\ref{fig:typeB}. 
If $\Gamma_i$ consists of one vertex with two markings (such as
$\Gamma_4$ in Figure~\ref{fig:typeB}) then $h_i(\bt,\bpsi_i) =
\bt_{(\sigma,b^i)}(\bpsi_i)$.  Otherwise $h_i(\bt,\bpsi_i)$ records a
more complicated contribution determined by the subgraph $\Gamma_i$;
we have:
\[
h_i(\bt,\bpsi_i) = Q^{d_i}
\Contr_{\sigma,b^i}(\Gamma_i)|_{z=\bpsi_i}
\]
where $d_i$ is the total degree of the contribution from $\Gamma_i$. 
Now fix $v$ and all
other parts of $\Gamma$ except the subtree $\Gamma_i$, and sum over
all possible subtrees $\Gamma_i$: the total contribution of the
$h_i(\bt,\bpsi_i)$s is \eqref{eq:type_A} with $b=b^i$ and $z=\bpsi_i$.  
Thus the contribution to \eqref{eq:just_type_B_left} given by the sum over
decorated graphs $\Gamma$ of type B such that the distinguished vertex
has valence $l$ and that the label $k_1$ of the distinguished vertex
is equal to $\hat{b} \in \Box(\sigma)$ is:
\[
\frac{1}{(l-1)!}
\int_{\Mbar_{0,l}(BG_v)} 
\frac{1^{\sigma,b}}{-z-\bpsi_1} \cup
\btau_{\sigma}(\bpsi_2) \cup \cdots \cup 
\btau_{\sigma}(\bpsi_l) \cup
\eT^{-1}\big((T_{\sigma}\X)_{0,l,0}\big)
\]
These are twisted Gromov--Witten invariants of the $\TT$-fixed point
$\X(\bSigma)_\sigma$.  
Summarizing, we see that \eqref{eq:just_type_B_left} becomes:
\begin{equation*}
  \fb_\sigma = {-1_\sigma z} + \btau_\sigma(z) +   
  \sum_{l=3}^\infty
  \sum_{b \in \Box(\sigma)}
  \frac{1}{(l-1)!}
  \<\frac{1^{\sigma,b}}{-z-\bpsi},
  \btau_\sigma(\psi),\ldots,\btau_\sigma(\psi)
  \>_{0,l,0}^\tw
  1_{\sigma,b}
\end{equation*}
The superscript `$\tw$' indicates that these are Gromov--Witten
invariants of $\X(\bSigma)_\sigma$ twisted by the vector
bundle $T_\sigma \X(\bSigma)$ and the $\TT$-equivariant
inverse Euler class $e_\TT^{-1}$.  Using 
\eqref{eq:very_big_J_function_twisted}, 
we see that the Laurent expansion at $z=0$ of $\fb_\sigma$ 
lies in $\sL_\sigma^\tw$.  Thus we have proved (C3).

Conversely, suppose that $\fb \in \sH[\![x]\!]$ satisfies $\fb|_{Q=x=0}=-1 z$ 
and conditions (C1),~(C2), and~(C3) in the statement of the Theorem.  
Conditions~(C1) and~(C2) together imply that:
\begin{align}
  \label{eq:fb_hypothetically_not_GW}
  \begin{split} 
  \fb_{\sigma} &= {-1_\sigma}z + \bt_{\sigma} + 
  \sum_{b\in \Box(\sigma)} 1_b 
 \sum_{\sigma' : \sigma|\sigma'} 
   \sum_{\substack{
      c \in \QQ : c > 0, \\
      \langle c \rangle = \hat{b}_j}}
  Q^{l(c,\sigma,j)}
  \frac{\RC(c)_{(\sigma, b)}^{(\sigma', b')}}
  {\big( -z + u_j(\sigma)/c\big)} 
  \fb_{(\sigma',b')}\big|_{z=u_j(\sigma)/c} \\
  & \qquad \qquad + O(z^{-1})
  \end{split} 
\end{align}
for some $\bt_{\sigma} \in \sH_{\sigma,+}[\![x]\!]$ with 
$\bt_{\sigma}|_{Q=t=0}= 0$.  
The remainder $O(z^{-1})$ is a formal power series 
in $Q$ and $x$ with coefficients in $z^{-1} S_\TT[z^{-1}]$. 
Let $\bt_\GW \in
\sH_+$ be the unique element such that its restriction to
$I\X(\bSigma)_{\sigma}$ is $\bt_{\sigma}$, and let
$\fb_\GW$ be the element of $\sL_\X$ defined by \eqref{eq:fb_GW} with
$\bt = \bt_\GW$.  Then, in view of the first part of the proof, we
have that $\fb_\GW$ and $\fb$ both satisfy conditions (C1--C3), and
both give rise to the same values $\bt_{\sigma}$ in
\eqref{eq:fb_hypothetically_not_GW}.  It therefore suffices to show
that $\fb$ can be reconstructed uniquely from the collection
\begin{equation}
\label{eq:collection}
\big(\bt_{\sigma} : \text{$\sigma \in \Sigma$ is a top-dimensional
  cone}\big)
\end{equation} 
using condition~(C3).

We argue by induction on the degree with respect to $Q$ and $x$. 
Pick a K\"{a}hler class $\omega$ of $\X(\bSigma)$ 
and assign the degree $\int_d \omega + \sum_{i=1}^m k_i$ 
to the monomial $Q^d x_1^{k_1}\cdots x_m^{k_m}$.  
Let $\kappa_0 = \int_{d_0} \omega>0$ be the minimal possible degree 
of a non-constant stable map.  
Suppose that $\fb$ is uniquely determined from the collection 
\eqref{eq:collection} up to order $\kappa$. 
We shall show that $\fb$ is determined up to order $\kappa+ \kappa_0$. 
We know by \eqref{eq:fb_hypothetically_not_GW} that 
$\fb_{\sigma}$ is determined up to order $\kappa + \kappa_0$ 
except for the term $O(z^{-1})$. 
On the other hand, under the Laurent expansion at $z=0$, 
all the quantities in the first line of \eqref{eq:fb_hypothetically_not_GW} 
lie in $\sH_{\sigma,+}^\tw$. 
Therefore, in view of \eqref{eq:very_big_J_function_twisted}, 
the term $O(z^{-1})$ is uniquely determined up to order $\kappa+\kappa_0$ 
from the quantities in the first line by 
condition (C3), i.e.~that the Laurent expansion at $z=0$ of 
$\fb_\sigma$ lies in $\sL_\sigma^\tw$. 
This completes the induction and the 
proof of Theorem~\ref{thm:char_of_cone}. 
\end{proof} 

\begin{rmk} 
\label{rmk:Liu_comparison} 
For the convenience of the reader, we compare Liu's notation
\cite[Lemma 9.25]{ccliu} with ours.  Consider a decorated graph
$\Gamma$ occurring in the proof above, and an edge $e\in E(\Gamma)$
with incident vertices $v$,~$v'\in V(\Gamma)$.  The edge $e$
corresponds to a toric representable morphism $f = f_e \colon
\PP_{r_1,r_2} \to \X(\bSigma)$ given by $\sigma,\sigma', c,b$ in
Proposition~\ref{prop:what_is_c}, where $\sigma = \sigma_v$ and
$\sigma' = \sigma_{v'}$.  Let $j$ and $j'$ be the indices in
Notation~\ref{notation:what_is_j}.  Recall (from
Definition~\ref{defn:what_is_l_c_j}) that the degree $l(c,\sigma,j)
\in \LL\otimes\QQ$ of the map $f$ is given by the relation:
\[
c \bar{\rho}_j + c' \bar{\rho}_{j'} + \sum_{i\in \sigma\cap\sigma'} c_i 
\bar{\rho}_i = 0 
\] 
Set $\tau = \tau_e = \sigma \cap \sigma'$.  Then Liu's
quantities\footnote{The definition of $a_i$ in \cite[\S 8.6]{ccliu}
  contains a typo; it should be the integral over the
  \emph{rigidification} of $\X(\bSigma)_\tau$.}
$\mathbf{w}(\tau,\sigma)$, $\mathbf{w}(\tau,\sigma')$,
$\mathbf{w}(\tau_i,\sigma)$, $\mathbf{w}(\tau_i',\sigma')$,
$r(\tau,\sigma)$, $\mathbf{u} = r(\tau,\sigma)
\mathbf{w}(\tau,\sigma)$, $d=d_e$, $a_i$ $\epsilon_i$, $r_{(e,v)}$,
$r_{(e,v')}$, $w_{(e,v)}$, $w_{(e,v')}$ are given in our notation as:
\begin{align*} 
\mathbf{w}(\tau,\sigma) & = u_j(\sigma) \quad 
\mathbf{w}(\tau,\sigma') = u_{j'}(\sigma') \\ 
\mathbf{w}(\tau_i,\sigma) &= u_i (\sigma) \quad 
\mathbf{w}(\tau_i',\sigma') = u_i(\sigma') \qquad 
\text{for $i \in \sigma\cap \sigma'$} \\
r(\tau,\sigma) & := \text{the order of the stabilizer at $\X(\bSigma)_\sigma$ 
of the rigidification $\X(\bSigma)_{\tau}^{\rm rig}$ of $\X(\bSigma)_\tau$} 
\\ 
& = \frac{|G_v|}{|G_e|} = 
\frac{|N(\sigma)|}{|N(\tau)_{\tor}|} = \text{the norm of the 
image of $\rho_j$ in $\overline{N(\tau)}\cong \ZZ$} \\
\mathbf{u} & = r(\tau,\sigma) u_j(\sigma) 
\\
d & := \text{the degree of the map 
$(f\colon |\PP_{r_1,r_2}| \to |\X(\bSigma)_\tau|)$ between 
the coarse curves ($\cong \PP^1$)} \\
& = r(\tau,\sigma) c  = r(\tau,\sigma') c' \\ 
a_i & := \int_{\X(\bSigma)_{\tau}^{\rm rig}} u_i 
= \frac{c_i}{d} = \frac{c_i}{c} r(\tau,\sigma)^{-1} \\ 
\epsilon_i & = \hat{b}_i \qquad \text{for} \quad i\in \sigma\cap\sigma' \\
r_{(e,v)} & = r_1, \quad 
r_{(e,v')} = r_2, \\
r_{(e,v)} w_{(e,v)} & = u_j(\sigma)/c, \quad 
r_{(e,v')} w_{(e,v')} = - r_{(e,v)} w_{(e,v)} = 
u_{j'}(\sigma')/{c'}   
\end{align*} 
where we set $N(\tau) = N/\sum_{i\in \tau} \ZZ \rho_i$, 
$\overline{N(\tau)} = N(\tau)/N(\tau)_{\rm tor}$ 
and $N(\tau)_{\tor}$ is the torsion part 
of $N(\tau)$.   
Our recursion coefficient $\RC(c)_{(\sigma,b)}^{(\sigma',b')}$ 
coincides with $\frac{1}{c} \eT(N_{\sigma,b}) \mathbf{h}(e)$ 
where $\mathbf{h}(e)$ is in \cite[(9.26)]{ccliu}. 
\end{rmk}

\section{Proof of Theorem~\ref{I_is_on_the_cone}}
\label{sec:pf_mir_thm}

In this Section we complete the proof of Theorem~\ref{I_is_on_the_cone}, 
by showing that the $S$-extended $I$-function 
\[
I^S_{\X(\bSigma)}(\tilde{Q},-z)
\] 
satisfies the conditions in Theorem~\ref{thm:char_of_cone}. 
This amounts to proving 
Propositions~\ref{prop:I_condition_1},~\ref{prop:recursion_for_I},
and~\ref{prop:restriction_for_I} below.
Note that the sign of $z$ should be flipped when we consider 
the $I$-function. 

For a top-dimensional cone $\sigma\in \Sigma$ and $b\in \Box(\sigma)$, 
we write $I^S_{\sigma}(\tilde{Q},z)$ and $I^S_{(\sigma,b)}(\tilde{Q},z)$ 
for the restrictions of $I^S_{\X(\bSigma)}(\tilde{Q},z)$ to the 
inertia stack $I\X(\bSigma)_\sigma$ of the $\TT$-fixed point $\X(\bSigma)_\sigma$ 
and the component $I\X(\bSigma)_{\sigma,b}$ of $I\X(\bSigma)_\sigma$ respectively. 

\begin{prop}
  \label{prop:I_condition_1}
  The extended $I$-function satisfies condition (C1) in
  Theorem~\ref{thm:char_of_cone}.  In other words, for each
  top-dimensional cone $\sigma\in \Sigma$ and $b\in \Box(\sigma)$,
  $I^S_{(\sigma,b)}(\tilde{Q}, z)$ is a power series in the extended
  Novikov variables $\tilde{Q}$ and $t$ such that each coefficient of
  this power series lies in $S_{\TT\times \CC^\times} =
  \CC(\chi_1,\dots,\chi_d,z)$ and, as a function of $z$, it is regular
  except possibly for a pole at $z=0$, a pole at $z=\infty$, and
  simple poles at:
  \[
  \Big\{ 
  \textstyle
  \frac{{-u_j}(\sigma)}{c} : 
  \text{$\exists \sigma' \in \Sigma$ 
    such that $\sigma|\sigma'$ and $j \in \sigma \setminus \sigma'$, 
    $c>0$ is such that $\langle c \rangle =
    \hat{b}_j$} \Big\}
  \]
  Here we use Notation~\ref{notation:what_is_j}.
\end{prop}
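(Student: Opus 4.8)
The plan is to restrict the $I$-function to the $\TT$-fixed point $\X(\bSigma)_\sigma$ using Atiyah--Bott localization, and then analyse the poles in $z$ of each summand of the resulting explicit sum. Restriction to $\X(\bSigma)_\sigma$ replaces $u_i$ by $u_i(\sigma)$; here $u_i(\sigma)=0$ for $i\notin\sigma$, while $\{u_i(\sigma):i\in\sigma\}$ is a $\CC$-basis of $N^*\otimes\CC$ by Remark~\ref{rmk:regularity}. Also $y^{b'}$ restricts to the identity on $I\X(\bSigma)_{\sigma,b}$ if $b'=b$ and to $0$ otherwise. Thus
\[
I^S_{(\sigma,b)}(\tilde{Q},z)=z\,e^{\sum_{i\in\sigma}u_i(\sigma)t_i/z}\sum_{\lambda\in\Lambda E^S_b}\tilde{Q}^\lambda e^{\lambda t}\prod_{i=1}^{n+m}F_i(\lambda,z),
\]
where $F_i(\lambda,z)$ is the factor indexed by $i$ in \eqref{eq:what_is_I}, with $u_i$ replaced by $u_i(\sigma)$ and with the formal quotient of products understood as a ratio of finitely many linear factors. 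Each such summand is visibly a rational function of $z$ and of the $\chi$'s, and since $\tilde{Q}^\lambda=Q^dx^k$ determines $\lambda$ uniquely, the coefficient of any monomial in $\tilde{Q}$ and $t$ is a finite sum of such summands; so it suffices to control the poles of a single summand.

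Next I would classify the factors $F_i(\lambda,z)$. For any $i$ with $u_i(\sigma)=0$ — that is, $i>n$, or $i\le n$ and $i\notin\sigma$ — we have $\lambda_i\in\ZZ$ (using $\lambda\in\Lambda^S_\sigma$, which holds as $b\in\Box(\sigma)$), the factor vanishes identically if $\lambda_i<0$ (its $a=0$ term is $u_i(\sigma)=0$), and equals $z^{-\lambda_i}/\lambda_i!$ if $\lambda_i\ge0$. Hence every nonzero summand has $\lambda_i\ge0$ for all such $i$, and these factors contribute only a pole at $z=0$. For $i\in\sigma$ we have $u_i(\sigma)\ne0$: if $\lambda_i\le0$ then $F_i$ is a polynomial in $z$, and if $\lambda_i>0$ then $F_i=\bigl(\prod_{0<a\le\lambda_i,\,\langle a\rangle=\hat{b}_i}(u_i(\sigma)+az)\bigr)^{-1}$, which has a simple pole at each $z=-u_i(\sigma)/a$. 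Together with the prefactor $z\,e^{\sum u_i(\sigma)t_i/z}$, which contributes only poles at $z=0$ and $z=\infty$, this already gives the asserted regularity away from the finite nonzero poles $-u_i(\sigma)/a$ with $i\in\sigma$, $0<a\le\lambda_i$, $\langle a\rangle=\hat{b}_i$.

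The remaining, and I expect main, point is to show that whenever a summand is nonzero and $\lambda_i>0$ for some $i\in\sigma$, the codimension-$1$ face $\tau:=\sigma\setminus\{i\}$ is shared with another top-dimensional cone $\sigma'$; then $\sigma\mid\sigma'$ with $j=i$ in Notation~\ref{notation:what_is_j}, and each pole $-u_i(\sigma)/a$ equals $-u_j(\sigma)/c$ with $c=a>0$, $\langle c\rangle=\langle\lambda_i\rangle=\hat{b}_i=\hat{b}_j$, which lies in the set in (C1). Suppose instead $\tau$ lies in the boundary of $|\Sigma|$. By semi-projectivity $|\Sigma|$ is full-dimensional and convex, so there is $\ell\in N^*\otimes\QQ$ with $\ell\ge0$ on $|\Sigma|$, $\ell|_\tau=0$, and $\ell(\bar{\rho}_i)>0$ (strictly, as $\sigma$ is top-dimensional). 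As $\lambda\in\LL^S\otimes\QQ=\ker(\rho^S\otimes\QQ)$, pairing $\ell$ with $(\rho^S\otimes\QQ)(\lambda)=0$ in $N\otimes\QQ$ gives
\[
\ell(\bar{\rho}_i)\,\lambda_i=-\sum_{\substack{1\le k\le n\\ k\ne i}}\ell(\bar{\rho}_k)\,\lambda_k-\sum_{j=1}^m\ell(\bar{s}_j)\,\lambda_{n+j}.
\]
On the right, terms with $k\in\tau$ vanish, and every remaining term is a product of two nonnegative numbers — using $\lambda_k\ge0$ whenever $u_k(\sigma)=0$ from the previous paragraph, together with $\ell\ge0$ on $|\Sigma|\ni\bar{s}_j$ — so the right side is $\le0$, forcing $\lambda_i\le0$, a contradiction. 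Finally, linear independence of $\{u_i(\sigma):i\in\sigma\}$ makes the points $-u_i(\sigma)/a$ pairwise distinct as $(i,a)$ varies, so each pole of a summand is simple, hence so is each pole of the finite sum; this verifies (C1). The bookkeeping of the factors $F_i$ is routine; the convexity input in the last step is the crux.
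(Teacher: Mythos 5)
Your proposal is correct and follows essentially the same route as the paper: restrict to the fixed point, observe that nonzero summands force $\lambda_i\in\ZZ_{\ge 0}$ for $i\notin\sigma$ (so those factors contribute only $z^{-\lambda_i}/\lambda_i!$), locate the candidate simple poles at $-u_i(\sigma)/a$ for $i\in\sigma$ with $\lambda_i>0$, and use convexity of $|\Sigma|$ (via the relation $\sum_{i\in\sigma}(-\lambda_i)\bar{\rho}_i=\sum_{i\notin\sigma}\lambda_i\bar{\rho}_i+\sum_j\lambda_{n+j}\bar{s}_j\in|\Sigma|$) to show the facet $\sigma\setminus\{i\}$ must be shared with another top-dimensional cone. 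Your only variation is to make the convexity step explicit by pairing a supporting linear functional with the relation, which is the same argument the paper leaves terse.
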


\begin{prop}\label{prop:recursion_for_I}
  The extended $I$-function satisfies condition (C2) in
  Theorem~\ref{thm:char_of_cone}.  In other words, for any
  $\sigma$,~$\sigma'\in \Sigma$ such that $\sigma|\sigma'$, we have:
  \[
  \Res_{z=-\frac{u_j(\sigma)}{c}} I^S_{(\sigma,b)}(\tilde{Q}, z) \, dz 
  = Q^{l(c,\sigma, j)} \,
  \RC(c)_{(\sigma, b)}^{(\sigma', b')} \,
  I^S_{(\sigma',b')}(\tilde{Q}, z)\Big|_{z=-\frac{u_j(\sigma)}{c}}
  \]
\end{prop}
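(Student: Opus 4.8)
The plan is to restrict the $S$-extended $I$-function to the $\TT$-fixed point $\X(\bSigma)_\sigma$ and to the twisted sector indexed by $b\in\Box(\sigma)$. Since $u_i$ restricts to $0$ at $\X(\bSigma)_\sigma$ whenever $i\notin\sigma$ (the toric divisor $D_i$ avoids this fixed point) and $u_i=0$ for $i>n$, and since by Remark~\ref{rmk:lambda_range} the sum may be taken over $\Lambda^S_b$, we get
\[
I^S_{(\sigma,b)}(\tilde{Q},z)=z\,e^{\sum_{i\in\sigma}u_i(\sigma)t_i/z}\sum_{\lambda\in\Lambda^S_b}\tilde{Q}^\lambda e^{\lambda t}\,F_\lambda(z),\qquad F_\lambda(z)=\prod_{i=1}^{n+m}\frac{\prod_{\langle a\rangle=\langle\lambda_i\rangle,\,a\le 0}(u_i(\sigma)+az)}{\prod_{\langle a\rangle=\langle\lambda_i\rangle,\,a\le\lambda_i}(u_i(\sigma)+az)}.
\]
Fix $\sigma\,|\,\sigma'$, write $\tau=\sigma\cap\sigma'$ and let $j,j'$ be as in Notation~\ref{notation:what_is_j}; fix $b\in\Box(\sigma)$ and $c>0$ with $\langle c\rangle=\hat{b}_j$, and let $d_0:=l(c,\sigma,j)\in\Lambda E_{\sigma,b}^{\sigma',b'}$ (Definition~\ref{defn:what_is_l_c_j}) and $b'=b'(c)\in\Box(\sigma')$ be the box element determined by $(\sigma,\sigma',b,c)$ via Remark~\ref{rmk:determined}.

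Next I would carry out the pole analysis. Because $\{u_i(\sigma):i\in\sigma\}$ is a basis of $\Lie(\TT)^*$ (Remark~\ref{rmk:regularity}) and $u_i(\sigma)=0$ for $i\notin\sigma$, the only factor of any denominator of any $F_\lambda$ that vanishes at $z_0:=-u_j(\sigma)/c$ is the $i=j$, $a=c$ factor $(u_j(\sigma)+cz)$, which occurs exactly when $\lambda_j\ge c$ (recall $\langle\lambda_i\rangle=\hat{b}_i$ for $i\le n$). Hence $I^S_{(\sigma,b)}$ has at worst a simple pole at $z_0$, and for $\lambda_j\ge c$,
\[
\Res_{z=z_0}\bigl(\text{$\lambda$-term of }I^S_{(\sigma,b)}\bigr)\,dz=\tfrac1c\,\bigl(\text{$\lambda$-term with $(u_j(\sigma)+cz)$ deleted from its denominator}\bigr)\Big|_{z=z_0},
\]
while terms with $\lambda_j<c$ are regular at $z_0$.

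The core of the argument is a term-by-term comparison with $I^S_{(\sigma',b')}$. By Lemma~\ref{lem:addition_map}, $\lambda\mapsto\lambda':=\lambda-d_0$ is a bijection $\Lambda^S_b\to\Lambda^S_{b'}$; its $j$-component is $D_j\cdot d_0=c$, so $\lambda_j\ge c\iff\lambda'_j\ge0$, and the terms with $\lambda'_j<0$ vanish in $I^S_{(\sigma',b')}$ (as $j\notin\sigma'$ forces $u_j(\sigma')=0$ and a vanishing numerator), so summing the residues over contributing $\lambda$ equals summing the $\lambda'$-terms of $I^S_{(\sigma',b')}$ at $z_0$ over all $\lambda'$. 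It then suffices to check, for each $\lambda$, that $\Res_{z=z_0}(\text{$\lambda$-term of }I^S_{(\sigma,b)})\,dz=Q^{d_0}\RC(c)_{(\sigma,b)}^{(\sigma',b')}\,(\text{$\lambda'$-term of }I^S_{(\sigma',b')})|_{z=z_0}$. This splits into: (i) $\tilde{Q}^\lambda=Q^{d_0}\tilde{Q}^{\lambda'}$, as $d_0$ has trivial extended part; (ii) the identity $e^{\sum_{i\in\sigma}u_i(\sigma)t_i/z_0}e^{\lambda t}=e^{\sum_{i\in\sigma'}u_i(\sigma')t_i/z_0}e^{\lambda't}$, which holds because $e^{\lambda t}/e^{\lambda't}=e^{c t_j+c't_{j'}+\sum_{i\in\tau}c_it_i}$ exactly cancels the discrepancy of prefactors once one uses the ``wall-crossing'' relations $u_i(\sigma')=u_i(\sigma)-(c_i/c)u_j(\sigma)$ for $i\in\tau$ and $u_{j'}(\sigma')=-(c'/c)u_j(\sigma)$, which follow from restricting the equivariant relations $\chi=\sum_i\chi(\rho_i)u_i$ of Section~\ref{sec:CR_coh} to the two fixed points together with the defining relation of $l(c,\sigma,j)$, and which also give $z_0=u_{j'}(\sigma')/c'$ as in Remark~\ref{rmk:regularity}; and (iii) the hypergeometric identity that $\tfrac1c$ times the ratio of $F_\lambda$ (with the pole-causing denominator factor deleted) to $F_{\lambda'}$, evaluated at $z_0$, equals $\RC(c)_{(\sigma,b)}^{(\sigma',b')}$ independently of $\lambda$. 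For (iii), the factors indexed by $i\notin\sigma\cup\sigma'$ and by $i>n$ agree for $\lambda$ and $\lambda'$, while for $i\in\{j,j'\}\cup\tau$ one substitutes $z=z_0$ — using $u_j(\sigma)+az_0=u_j(\sigma)(c-a)/c$ and the wall-crossing relations for the remaining $i$ — and evaluates the resulting regularized products in terms of $\lfloor c\rfloor,\lfloor c'\rfloor$ and the fractional parts $\hat{b}_i$; the answer is precisely the closed form of $\RC(c)_{(\sigma,b)}^{(\sigma',b')}$ in Definition~\ref{defn:recursion_coefficient}. Summing over $\lambda$ yields Proposition~\ref{prop:recursion_for_I}.

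The step I expect to be the main obstacle is (iii): matching the product of hypergeometric factors against the explicit $\RC(c)_{(\sigma,b)}^{(\sigma',b')}$ demands careful tracking of floors, fractional parts, and the case distinction $\hat{b}_i=0$ versus $\hat{b}_i\ne0$ — in the former case the regularized products leave an extra factor $u_i(\sigma)$, which is exactly matched by the factor $\prod_{i\in\sigma:\,b_i=0}u_i(\sigma)$ in $\RC$ — as well as the $\inv$-conventions relating $b$, $\hat{b}=\inv(b)$ and the partner $b'$ pinned down by \eqref{eq:def_b'}. This is the $I$-function counterpart of Liu's Euler-class computation \cite[Lemma~9.25]{ccliu}, and it succeeds because $\RC(c)_{(\sigma,b)}^{(\sigma',b')}$ was defined to make it so (cf.~Remark~\ref{rmk:Liu_comparison}); by contrast the pole analysis and the reorganization of summation ranges are routine given Lemma~\ref{lem:addition_map}.
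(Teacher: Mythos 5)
Your proposal is correct and follows essentially the same route as the paper: compute the residue of the fixed-point restriction \eqref{eq:I_sigma_b} at $z=-u_j(\sigma)/c$ (a simple pole from the single factor $u_j(\sigma)+cz$), shift the summation index by $l(c,\sigma,j)$ via Lemma~\ref{lem:addition_map}, and match factors using the weight relation $u_i(\sigma)=u_i(\sigma')+\tfrac{c_i}{c}u_j(\sigma)$ before identifying the leftover constant with $\RC(c)_{(\sigma,b)}^{(\sigma',b')}$ — exactly the content of Lemma~\ref{lem:useful_things} and the computation following it. The only (harmless) deviation is that you derive the weight relation from the presentation of $H^\bullet_{\CR,\TT}$ rather than, as the paper does, by localization on the invariant curve $\PP_{r_1,r_2}$.
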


\begin{prop}\label{prop:restriction_for_I}
The extended $I$-function satisfies condition (C3) in
Theorem~\ref{thm:char_of_cone}.  
In other words, if $\sigma\in \Sigma$ is a top-dimensional cone, 
then the Laurent expansion at $z=0$ of $I^S_\sigma(\tilde{Q},-z)$ 
is a $\Lambda_{\rm nov}^\TT[\![x,t]\!]$-valued point of $\sL_\sigma^\tw$. 
\end{prop}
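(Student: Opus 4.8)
The plan is to show that the Laurent expansion at $z=0$ of $I^S_\sigma(\tilde Q,-z)$ is of the form \eqref{eq:very_big_J_function_twisted} for the $\eT^{-1}$-twisted Gromov--Witten theory of the $\TT$-fixed point $\X(\bSigma)_\sigma\cong BN(\sigma)$ with twisting bundle $T_\sigma\X(\bSigma)$. First I would restrict the defining formula \eqref{eq:what_is_I} to the component $I\X(\bSigma)_\sigma$: this replaces each $u_i$ ($1\le i\le n$) by its restriction $u_i(\sigma)$ and kills all boxes $b\notin\Box(\sigma)$. Because every surviving box $b$ lies in $\Box(\sigma)$, each index $\lambda$ in the remaining sum lies in $\Lambda^S_\sigma$, so that $\lambda_i\in\ZZ$ whenever $i\notin\sigma$ or $i>n$, while $\langle\lambda_i\rangle=\hat b_i$ for $i\in\sigma$. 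I would then group the factors $\prod_{i=1}^{n+m}$ of \eqref{eq:what_is_I} into three blocks --- the ``tangent block'' $i\in\sigma$, whose weights $u_i(\sigma)$ are the $\TT$-weights of the twisting bundle $T_\sigma\X(\bSigma)$; the ``normal block'' $i\notin\sigma$ with $i\le n$, whose weights $u_i(\sigma)$ are nonzero elements of $S_\TT$; and the ``extended block'' $n<i\le n+m$ --- and correspondingly decompose $\Lambda E^S_b$ using the rational splitting \eqref{eq:LS_splitting}.

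Next I would reassemble the contribution into the shape of a twisted big $J$-function. The tangent block is responsible for the pole of $I^S_\sigma$ at $z=0$ and reproduces, after the sign change $z\mapsto -z$ and using $\langle\lambda_i\rangle=\hat b_i$, precisely the hypergeometric factors of a ``twisted $I$-function'' of $BN(\sigma)$ for the bundle $T_\sigma\X(\bSigma)$; the normal block, being regular at $z=0$ (since $u_i(\sigma)\neq0$ for $i\notin\sigma$), contributes a power series in $z$, hence lands in $\sH^\tw_{\sigma,+}$; the extended block contributes the factors $\prod_j 1/(\lambda_{n+j}!\,z^{\lambda_{n+j}})$, which are finite in $z^{-1}$. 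These, together with the prefactor $(-z)\,e^{-\sum_i u_i(\sigma)t_i/z}$ and the sum over the $\LL\otimes\QQ$-component of $\lambda$, must be matched with $-1_\sigma z+\btau_\sigma(z)+\sum_{n,\alpha}\tfrac{1}{n!}\langle\btau_\sigma(\bpsi),\dots,\btau_\sigma(\bpsi),\tfrac{\phi_\alpha}{-z-\bpsi}\rangle^{\eT^{-1},T_\sigma\X(\bSigma)}_{0,n+1,0}\,\phi^\alpha$ for a suitable input $\btau_\sigma(z)\in\sH^\tw_{\sigma,+}[\![x,t]\!]$ with $\btau_\sigma|_{x=Q=t=0}=0$; the $(Q,x,t)$-adic convergence that makes $\btau_\sigma$ and the whole expression well defined follows from strict convexity of the $S$-extended Mori cone $\NE^S(\X(\bSigma))$ (Section~\ref{sec:extended_Picard}). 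In effect the required identity is the statement that this ``twisted $I$-function'' of $BN(\sigma)$, together with the extra parameters carried by the normal and extended blocks, lies on $\sL^\tw_\sigma$.

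I expect the crux to be this last identification: a mirror theorem for the point $BN(\sigma)$ in the $\eT^{-1}$-twisted theory. Fortunately $\X(\bSigma)_\sigma$ carries no curve classes, so its twisted genus-zero descendant invariants reduce to integrals over $\overline{M}_{0,n}$ of Chern classes of $R\pi_\star\ev^\star T_\sigma\X(\bSigma)$ restricted to the boxed sectors, and are therefore computable in closed form; one can verify the required identity directly from this combinatorics, or deduce it from Tseng's orbifold quantum Riemann--Roch theorem \cite{ts} applied to the (trivial, cubic-Frobenius) untwisted theory of $BN(\sigma)$ --- the analogous computation is already carried out for the fixed points $B\mu_r$ of weighted projective spaces in \cite{cclt} and for $[\CC^n/G]$ in \cite{JPT}. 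The delicate bookkeeping will be: moving the prefactor $e^{-\sum u_i(\sigma)t_i/z}$ inside the $J$-function via the twisted divisor and string equations, exactly as in Givental's passage from an $I$-function to a $J$-function; keeping track of the normalisation $1^{\sigma,b}=|N(\sigma)|\,\eT(N_{\sigma,b})\,1_{\sigma,\hat b}$ from \eqref{eq:f_sigma_b}; and matching the $\LL\otimes\QQ$-summation (a purely formal Novikov parameter for $BN(\sigma)$) against the $S$-extended combinatorics. Once this identification is in place, \eqref{eq:very_big_J_function_twisted} immediately yields (C3).
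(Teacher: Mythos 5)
Your overall strategy coincides with the paper's: restrict to the fixed point, recognise the result as a hypergeometric modification of the Jarvis--Kimura $J$-function of $BN(\sigma)$, and transport it to $\sL^\tw_\sigma$ via Tseng's quantum Riemann--Roch, following \cite{ccit}. But one of your stated steps is false. For $i\notin\sigma$ one has $u_i(\sigma)=0$, not $u_i(\sigma)\neq 0$: the toric divisor $\{Z_i=0\}$ misses the fixed point $\X(\bSigma)_\sigma$ exactly when $i\notin\sigma$, and the paper uses $u_i(\sigma)=0$ explicitly in deriving \eqref{eq:I_sigma_b}. Hence your ``normal block'' contributes $\prod_{i\notin\sigma,\ i\le n}1/\bigl(\lambda_i!\,(-z)^{\lambda_i}\bigr)$ --- pure negative powers of $z$, which do not land in $\sH^\tw_{\sigma,+}$ --- and is structurally identical to your ``extended block''. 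This matters: the correct decomposition is $\sigma$ versus its complement in $\{1,\dots,n+m\}$, not the splitting \eqref{eq:LS_splitting} into extended and non-extended parts. The indices $i\notin\sigma$ are the $\ell=n+m-\dim\X(\bSigma)$ free summation variables, the $\lambda_j$ with $j\in\sigma$ are determined linearly by them via \eqref{eq:lambdaj}, and the combined normal-plus-extended factors, after the substitution $\tilde Q^\lambda=\prod_{i\notin\sigma}q_i^{\lambda_i}$, are precisely $J_{BN(\sigma)}\bigl(\sum_{i\notin\sigma}q_i 1_{b^i},-z\bigr)$ as in \eqref{eq:J_BN}. Misplacing the normal block would leave you trying to match a ``twisted $I$-function'' built from the tangent and extended blocks alone, which does not have the right shape.

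On the crux itself you name the right tool (quantum Riemann--Roch), but the paper does not conclude by exhibiting a $\btau_\sigma$ and matching against \eqref{eq:very_big_J_function_twisted}. Instead it works with the universal characteristic class with parameters $s^{(j)}_k$, shows that $\exp\bigl(-\sum_{j\in\sigma}G_0^{(j)}(z\theta_j,z)\bigr)J_{BN(\sigma)}$ remains on the \emph{untwisted} cone --- this is where the overruled-cone property of Theorem~\ref{thm:lag_cone} is used --- and then applies Tseng's operator $\Delta_\bs$ to land on $\sL^\bs$, specialising the $s^{(j)}_k$ at the end to recover $\eT^{-1}$. Two further points you gesture at but should make precise: the reduction to $t=0$ uses that $BN(\sigma)$ carries no Novikov variables, so $\tilde Q$ may be treated as coordinates and $e^{\lambda t}$ absorbed by rescaling, with the string equation disposing of the prefactor $e^{-\sum_i u_i(\sigma)t_i/z}$; and no appeal to strict convexity of $\NE^S$ is needed once this is done.
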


\subsection{Poles of the Extended $I$-Function} 
\label{sec:I_condition_1} 
In this subsection we prove Proposition~\ref{prop:I_condition_1}. 
Let $\sigma$ be a top-dimensional cone and take 
$b\in \Box(\sigma)$. 
The restriction $I^S_{(\sigma,b)}$ of the $I$-function 
to the fixed point $I\X(\bSigma)_{\sigma,b}$ 
takes the form: 
\begin{equation} 
\label{eq:I_sigma_b}
I^S_{(\sigma,b)}(\tilde{Q},z) = z e^{\sum_{i=1}^n u_i(\sigma)t_i/z} 
\sum_{\lambda \in \Lambda E^S_b} 
\tilde{Q}^\lambda e^{\lambda t} 
\left( 
\prod_{i\notin \sigma} \frac{\prod_{a\le 0, \<a\>=0} a z}
{\prod_{a\le \lambda_i, \<a\>=0} a z} 
\right) 
\left( 
\prod_{i\in \sigma} \frac{\prod_{a\le 0,\<a\>=\<\lambda_i\>} 
(u_i(\sigma) + az)}{\prod_{a\le \lambda_i, \<a\> = \<\lambda_i\>} 
(u_i(\sigma) + az)} 
\right) 
\end{equation} 
where the index $i$ ranges over $\{1,\dots,n+m\}$ and we regard
$\sigma\subset \{1,\dots,n\}$ as a subset of $\{1,\dots,n+m\}$.  We
also used $u_i(\sigma)=0$ for $i\notin \sigma$.  For $\lambda\in
\Lambda^S_b$, we have that $\lambda_i\in \ZZ$ for all $i\notin \sigma$
because $\<\lambda_i\> = \hat{b}_i$ and $b\in \Box(\sigma)$.  Note
also that one may assume that $\lambda_i\in \ZZ_{\ge 0}$ for $i\notin
\sigma$ in the above sum, as otherwise the contribution is zero.  We
see that $I^S_{(\sigma,b)}$ has poles possibly at $z=0$ and $z=\infty$
and simple poles at
\begin{align*}
  -u_i(\sigma)/a && \text{with} &&
  \text{$0<a\le \lambda_i$, $\<a\>=\<\lambda_i\>
    =\hat{b}_i$, $i\in \sigma$}
\end{align*}
for $\lambda \in \Lambda E^S_b$ contributing to the sum.  It suffices
to see that, if $\lambda_{i_0}>0$ for some $i_0\in\sigma$, then there
exists a top-dimensional cone $\sigma'$ such that $\sigma|\sigma'$ and
$i_0\in \sigma \setminus \sigma'$, i.e. $i_0=j$ in Notation
\ref{notation:what_is_j}.  We have
\begin{equation}
\label{eq:lambda_i_relation}
\sum_{i\in \sigma}(-\lambda_i) \bar{\rho}_i = \sum_{\substack{i : 1
    \le i\le n \\ i\notin \sigma}} 
\lambda_i \bar{\rho}_i + \sum_{i=1}^m \lambda_{n+i} \bar{s}_i 
\end{equation}
where $s_1,\dots,s_m$ are the images of elements of $S$ in $N_\Sigma$.
As we remarked above, we may assume that $\lambda_i\in \ZZ_{\ge 0}$
for $i\notin \sigma$ and hence the right-hand side belongs to the
support $|\Sigma|$ of the fan.  Therefore $\sum_{i\in
  \sigma}(-\lambda_i)\bar{\rho}_i \in |\Sigma|$.  Because $|\Sigma|$
is convex, the positivity of $\lambda_{i_0}$ implies that there exists
a top-dimensional cone $\sigma'\in \Sigma$ such that $\sigma|\sigma'$
and $i_0\in\sigma \setminus \sigma'$.
Proposition~\ref{prop:I_condition_1} is proved.

\subsection{Recursion for the Extended $I$-Function}

In this subsection we prove Proposition~\ref{prop:recursion_for_I}.
Let $\sigma \in \Sigma$ be a top-dimensional cone and let $b\in
\Box(\sigma)$.  Fix another top-dimensional cone $\sigma'$ with
$\sigma|\sigma'$ and a positive rational number $c$ such that $\<c\> =
\hat{b}_j$, where $j$ is the index in
Notation~\ref{notation:what_is_j}.  We examine the residue of
$I^S_{(\sigma,b)}$ at $z=-u_j(\sigma)/c$.  Write
\[
\Boxshape_{\lambda,i,\sigma}(z) = 
\frac{\prod_{\<a\> = \<\lambda_i\>, a\le 0} 
(u_i(\sigma) + a z)}{\prod_{\<a\> = \<\lambda_i\>, a\le \lambda_i}
(u_i(\sigma) + a z)}
\]
for $\lambda \in \Lambda^S$ and $1\le i \le n+m$. 
The residue of \eqref{eq:I_sigma_b} at $z=-u_j(\sigma)/c$ 
is given by: 
\begin{equation}
\label{eq:residue} 
\left( - \tfrac{u_j(\sigma)}{c}\right) 
e^{\frac{\sum_{i=1}^n u_i(\sigma) t_i}{-u_j(\sigma)/c}} 
\frac{1}{c}  
\sum_{\substack{\lambda\in \Lambda^S_b \\ 
\lambda_j \ge c}} \tilde{Q}^\lambda 
e^{\lambda t} 
\frac{\prod_{i: i\neq j} \Boxshape_{\lambda,i,\sigma}(-\tfrac{u_j(\sigma)}{c}) }
{\prod_{\substack{0<a\le \lambda_j, \<a\>=\<\lambda_j\>\\  a\neq c}}
(u_j(\sigma)  - a \frac{u_j(\sigma)}{c})}
\end{equation} 
Recall from Remark~\ref{rmk:lambda_range} that the summation 
range can be taken to be $\Lambda^S_b$ instead of $\Lambda E^S_b$. 
Let $l(c,\sigma,j) \in \Lambda E_{\sigma,b}^{\sigma',b'}
\subset \LL\otimes \QQ$ be the degree from Definition~\ref{defn:what_is_l_c_j}. 
We now consider the change of variables 
\[
\lambda = \lambda' + l(c,\sigma,j) 
\]
and replace the sum over $\lambda\in \Lambda_b^S$ with the sum over 
$\lambda'\in \Lambda_{b'}^S$ using Lemma~\ref{lem:addition_map}. 
We write $c_i$ for the components of $l(c,\sigma,j)\in \LL\otimes \QQ 
\subset \LL^S \otimes \QQ$ as an element of $\QQ^{n+m}$. 
Using the notation in Definition~\ref{defn:what_is_l_c_j}, 
we have $c_i = D_i \cdot l(c,\sigma,j)$ for $1\le i\le n$, 
$c_j=c$, $c_{j'} = c'$ and $c_i=0$ for $n+1 \le i\le n+m$. 

\begin{lem} 
  \label{lem:useful_things}
Let $\lambda, \lambda'$ be as above. We have:
\begin{align} 
\label{eq:weight_difference}
&  u_i(\sigma) = u_i(\sigma') + \frac{c_i}{c} u_j(\sigma) \\ 
\label{eq:exponents}
&  \frac{\sum_{i=1}^n u_i(\sigma) t_i}{-u_j(\sigma)/c} + \lambda t = 
\frac{\sum_{i=1}^n u_i(\sigma') t_i}{-u_j(\sigma)/c} + \lambda' t \\ 
\label{eq:recursion_coeff_main}
& \Boxshape_{\lambda,i,\sigma}(-\tfrac{u_j(\sigma)}{c}) = 
\Boxshape_{\lambda',i,\sigma'}(-\tfrac{u_j(\sigma)}{c}) 
\frac{\prod_{a \le 0, \<a\>=\langle \lambda_i\rangle}
(u_i(\sigma)-\frac{a}{c}u_j(\sigma)) }
{\prod_{a\le c_i, \<a\> = \langle \lambda_i\rangle}
(u_i(\sigma)-\frac{a}{c} u_j(\sigma))} && 
\text{for $i\neq j$}\\ 
\label{eq:jth_factor} 
& \prod_{\substack{0<a\le \lambda_j, \<a\>=\<\lambda_j\>\\  a\neq c}}
(u_j(\sigma)  - a \tfrac{u_j(\sigma)}{c}) 
 = \prod_{\substack{-c<a\le \lambda'_j, a\in \ZZ \\ a\neq 0}} 
(- a \tfrac{u_j(\sigma)}{c})  
\end{align} 
\end{lem}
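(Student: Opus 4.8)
The plan is to isolate the single geometric ingredient \eqref{eq:weight_difference} first, and then deduce \eqref{eq:exponents}, \eqref{eq:recursion_coeff_main} and \eqref{eq:jth_factor} from it by elementary manipulations of the hypergeometric products, evaluated at $z=-u_j(\sigma)/c$.

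For \eqref{eq:weight_difference} I would argue combinatorially. Since $u_i$ is the $\TT$-equivariant Poincar\'e dual of the toric divisor $\{Z_i=0\}/G$ with its natural linearization, for which the defining section has weight zero (Section~\ref{sec:equiv_obj}), its restriction to a $\TT$-fixed point not lying on that divisor vanishes; hence $u_i(\sigma)=0$ for $i\notin\sigma$ and $u_i(\sigma')=0$ for $i\notin\sigma'$. Restricting the relations $\chi=\sum_{i=1}^n\chi(\rho_i)u_i$ valid in $H^2_{\CR,\TT}(\X)$ (Section~\ref{sec:CR_coh}) to $\X(\bSigma)_\sigma$ and to $\X(\bSigma)_{\sigma'}$ and subtracting, one finds that $v_i:=u_i(\sigma)-u_i(\sigma')$ satisfies $\sum_{i=1}^n\bar\rho_i\otimes v_i=0$ in $N_\QQ\otimes(N^*\otimes\CC)$. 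Using the vanishings, only the indices in $\sigma\cup\sigma'$ survive, with $v_j=u_j(\sigma)$ and $v_{j'}=-u_{j'}(\sigma')$. Tensoring the defining relation $c\bar\rho_j+c'\bar\rho_{j'}+\sum_{i\in\sigma\cap\sigma'}c_i\bar\rho_i=0$ of $l(c,\sigma,j)$ (Definition~\ref{defn:what_is_l_c_j}) with $\tfrac1c u_j(\sigma)$ and subtracting it cancels the $\bar\rho_j$-term, leaving a relation among $\bar\rho_{j'}$ and the $\bar\rho_i$, $i\in\sigma\cap\sigma'$. These are exactly the linearly independent primitive generators of the simplicial top-dimensional cone $\sigma'$, so every coefficient vanishes, yielding $u_{j'}(\sigma')=-\tfrac{c'}{c}u_j(\sigma)$ (which reproves the tangent-weight identity of Remark~\ref{rmk:regularity}) and $v_i=\tfrac{c_i}{c}u_j(\sigma)$ for $i\in\sigma\cap\sigma'$. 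The remaining cases of \eqref{eq:weight_difference} ($i=j$, $i=j'$, $i\notin\sigma\cup\sigma'$, and $n<i\le n+m$, where $u_i=c_i=0$) are immediate, so \eqref{eq:weight_difference} holds for all $i$.

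Given \eqref{eq:weight_difference}, identity \eqref{eq:exponents} is immediate: dividing by $-u_j(\sigma)/c$ and summing against $t_i$ over $1\le i\le n$ produces the correction $-\sum_i c_i t_i$, which is cancelled by $\lambda t-\lambda' t=l(c,\sigma,j)\cdot t=\sum_i(D_i\cdot l(c,\sigma,j))t_i=\sum_i c_i t_i$, using $\lambda=\lambda'+l(c,\sigma,j)$ and the definition of $\lambda t$. For \eqref{eq:recursion_coeff_main} and \eqref{eq:jth_factor} I would set $z_0=-u_j(\sigma)/c$ and use \eqref{eq:weight_difference} in the form $u_i(\sigma)+az_0=u_i(\sigma')-\tfrac{a-c_i}{c}u_j(\sigma)$. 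For $i\ne j$, substituting this into the defining product for $\Boxshape_{\lambda,i,\sigma}(z_0)$ and re-indexing $a\mapsto a-c_i$ — a bijection of $\{a:\langle a\rangle=\langle\lambda_i\rangle\}$ onto $\{a:\langle a\rangle=\langle\lambda'_i\rangle\}$ (since $\lambda_i-\lambda'_i=c_i$, the $\lambda,\lambda'$ being related as in Lemma~\ref{lem:addition_map}) carrying $\{a\le0\}$ and $\{a\le\lambda_i\}$ to $\{a\le-c_i\}$ and $\{a\le\lambda'_i\}$ — rewrites $\Boxshape_{\lambda,i,\sigma}(z_0)$ as $\Boxshape_{\lambda',i,\sigma'}(z_0)$ times the finite ratio of $\prod_{a\le-c_i}\big(u_i(\sigma')-\tfrac ac u_j(\sigma)\big)$ to $\prod_{a\le0}\big(u_i(\sigma')-\tfrac ac u_j(\sigma)\big)$, both products over $a$ with $\langle a\rangle=\langle\lambda'_i\rangle$; applying the same substitution and re-indexing to the correction ratio on the right of \eqref{eq:recursion_coeff_main} gives the identical expression, proving \eqref{eq:recursion_coeff_main}. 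For \eqref{eq:jth_factor}, note that $j\notin\sigma'$ forces $\lambda'_j\in\ZZ$, hence $\langle\lambda_j\rangle=\langle c\rangle$ because $\lambda_j=\lambda'_j+c$; writing $a=a'+c$ with $a'\in\ZZ$ and using $u_j(\sigma)-a\tfrac{u_j(\sigma)}{c}=-a'\tfrac{u_j(\sigma)}{c}$ turns the left-hand product over $\{0<a\le\lambda_j,\ \langle a\rangle=\langle c\rangle,\ a\ne c\}$ into $\prod_{a'\in\ZZ,\,-c<a'\le\lambda'_j,\,a'\ne0}\big(-a'\tfrac{u_j(\sigma)}{c}\big)$, the asserted identity. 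The hard part is Step~1, and within it the delicate point is fixing the linearizations so that $u_i(\sigma)=0$ for $i\notin\sigma$ — this is what makes the combinatorial cancellation work and matches the geometric tangent-weight identity of Remark~\ref{rmk:regularity}; everything after that is bookkeeping with ceilings, floors and fractional parts.
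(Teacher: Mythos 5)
Your proposal is correct, and for the only step where the paper does real work --- the identity \eqref{eq:weight_difference} --- you take a genuinely different route. The paper proves \eqref{eq:weight_difference} by localization on the orbifold curve: it evaluates $c_i = D_i\cdot l(c,\sigma,j) = \int^\TT_{\PP_{r_1,r_2}} f^*u_i$ by Atiyah--Bott at the two fixed points $0,\infty$, using the tangent weights $\pm u_j(\sigma)/c$ of the coarse curve from Remark~\ref{rmk:regularity}, and reads off $c_i = \tfrac{c}{u_j(\sigma)}\bigl(u_i(\sigma)-u_i(\sigma')\bigr)$. You instead work purely with the presentation of $H^\bullet_{\CR,\TT}$: restricting the relations $\chi=\sum_i\chi(\rho_i)u_i$ to the two fixed points, subtracting, and pairing against the relation defining $l(c,\sigma,j)$, then using linear independence of the generators of the simplicial cone $\sigma'$. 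Both arguments are valid; yours is more self-contained (it does not presuppose the tangent-weight identity $u_{j'}(\sigma')/c'=-u_j(\sigma)/c$, but rather re-derives it), while the paper's is shorter given that Remark~\ref{rmk:regularity} is already in place. For \eqref{eq:exponents}, \eqref{eq:recursion_coeff_main} and \eqref{eq:jth_factor} the paper simply says they follow easily or are obvious; your substitutions $a\mapsto a-c_i$ (using $\lambda_i-\lambda'_i=c_i$ and the invariance of the congruence condition on $a$) and $a\mapsto a-c$ are exactly the intended bookkeeping, and your verification that the correction ratio in \eqref{eq:recursion_coeff_main} transforms into the same expression is carried out correctly.
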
 
\begin{proof} 
  The formulas \eqref{eq:exponents} and
  \eqref{eq:recursion_coeff_main} follow easily from
  \eqref{eq:weight_difference}; the formula \eqref{eq:jth_factor} is
  obvious if we notice $\langle c\rangle = \hat{b}_j = \langle
  \lambda_j \rangle$ and $\lambda_j = \lambda'_j + c$.  It suffices to
  show \eqref{eq:weight_difference}.  The equality
  \eqref{eq:weight_difference} is obvious for $n+1\le i\le m$, so we
  restrict to the case where $1\le i\le n$.  Consider the
  representable morphism $f\colon \PP_{r_1,r_2} \to \X(\bSigma)$ given
  by $(\sigma,\sigma',b,c)$ via Proposition~\ref{prop:what_is_c}.  By
  the localization formula, we obtain
\begin{align*} 
c_i & = D_i \cdot l(c,\sigma,j) = \int_{\PP_{r_1,r_2}} f^* D_i 
=\int_{\PP_{r_1,r_2}}^\TT f^*u_i 
= \frac{u_i(\sigma)}{u_j(\sigma)/c} + \frac{u_i(\sigma')}{-u_j(\sigma)/c} 
\end{align*} 
where we use the fact that $u_j(\sigma)/c$ and $-u_j(\sigma)/c$ are
the induced $\TT$-weights at $0$ and $\infty$ of the coarse domain
curve $|\PP_{r_1,r_2}|$: see Remark~\ref{rmk:regularity}.  The Lemma
follows.
\end{proof} 

Applying Lemma~\ref{lem:useful_things}, we see that \eqref{eq:residue}
equals
\begin{multline*} 
\left( - \tfrac{u_j(\sigma)}{c}\right) 
e^{\frac{\sum_{i=1}^n u_i(\sigma') t_i}{-u_j(\sigma)/c}} 
\frac{1}{c}  
\sum_{\substack{\lambda'\in \Lambda^S_{b'} \\ 
\lambda'_j \ge 0}} 
\tilde{Q}^{\lambda'} e^{\lambda' t} 
\frac{\prod_{i: i\neq j} \Boxshape_{\lambda',i,\sigma'}(-\tfrac{u_j(\sigma)}{c}) }
{\prod_{\substack{-c<a\le \lambda'_j, a\in \ZZ \\ a\neq 0}} 
(-a\frac{u_j(\sigma)}{c})}
\prod_{i: i\neq j} 
\frac{\prod_{a \le 0, \<a\>=\langle \lambda_i\rangle}
(u_i(\sigma)-\frac{a}{c}u_j(\sigma)) }
{\prod_{a\le c_i, \<a\> = \langle \lambda_i\rangle}
(u_i(\sigma)-\frac{a}{c} u_j(\sigma))}  \\
= 
\frac{1}{c} 
\frac{1}{\prod_{0<a<c, a\in \ZZ} \left( a \frac{u_j(\sigma)}{c}\right)}  
\prod_{i \in \sigma'} 
\frac{\prod_{a \le 0, \<a\>=\langle \lambda_i\rangle}
(u_i(\sigma)-\frac{a}{c}u_j(\sigma)) }
{\prod_{a\le c_i, \<a\> = \langle \lambda_i\rangle}
(u_i(\sigma)-\frac{a}{c} u_j(\sigma))} 
I^S_{(\sigma',b')}(\tilde{Q},z)\Big|_{z= -u_j(\sigma)/c}
\end{multline*}
multiplied by $Q^{l(c,\sigma,j)}$. 
It is now straightforward to check that the last expression 
coincides with 
\[
\RC(c)_{(\sigma,b)}^{(\sigma',b')} I^S_{(\sigma',b')}(\tilde{Q},z)
\Big|_{z= -u_j(\sigma)/c} 
\]
(Here we used $u_{j'}(\sigma) = 0$ and $\lambda_{j'} \in
\ZZ$.)\phantom{.}  Proposition~\ref{prop:recursion_for_I} is proved.

\subsection{Restriction of the Extended $I$-Function to Fixed Points}

In this subsection we prove Proposition~\ref{prop:restriction_for_I}. 
Let $\sigma\in \Sigma$ be a top-dimensional cone. 
By \eqref{eq:I_sigma_b} and the discussion in 
Section \ref{sec:I_condition_1}, 
the restriction $I^S_\sigma(\tilde{Q},-z)$ of the $S$-extended $I$-function to 
the $\TT$-fixed point $\X(\bSigma)_\sigma$ is: 
\begin{equation*}
-ze^{-\sum_{i=1}^n u_i(\sigma) t_i/z} 
\sum_{\substack{\lambda \in \Lambda_\sigma^S \\ 
\lambda_i \in \ZZ_{\ge 0} \text{ if } i\notin \sigma}}
\frac{\tilde{Q}^\lambda e^{\lambda t} }
{\prod_{i\notin \sigma}\lambda_i!(-z)^{\lambda_i}}  
\left( \prod_{j \in \sigma} 
\frac{\prod_{\< a \>=\langle \lambda_j \rangle, a\leq 0}(u_j (\sigma) -a z)}
{\prod_{\<a\>=\langle \lambda_j \rangle, a \leq \lambda_j}(u_j (\sigma)- a z)}
\right) 
1_{v^S(\lambda)}
\end{equation*} 
where $1_{v^S(\lambda)}\in H^\bullet_{\CR}(\X(\bSigma)_\sigma)$ 
is the identity class supported on the twisted sector corresponding 
to $v^S(\lambda)\in \Box(\sigma)$. 
We want to show that this lies on the Lagrangian cone $\sL^\tw_\sigma$.  
We claim that it suffices to show that $I^S_\sigma(\tilde{Q},-z)|_{t=0}$ 
lies on $\sL^\tw_\sigma$. 
By the String Equation, $\sL^\tw_\sigma$ is invariant under multiplication 
by $e^{-\sum_{i=1}^n u_i(\sigma) t_i/z}$ and thus 
we can remove the factor $e^{-\sum_{i=1}^n u_i(\sigma)t_i/z}$. 
Since the $\TT$-fixed point $\X(\bSigma)_\sigma$ has no Novikov variables, 
we can regard $\tilde{Q}$ in $I_\sigma^S(\tilde{Q},-z)$ 
as \emph{variables} rather than elements of the 
ground ring. (In other words, $\sL_\sigma^\tw$ is defined over 
$S_\TT$.) 
Therefore we can absorb the factor $e^{\lambda t}$ 
into $\tilde{Q}$ by rescaling $\tilde{Q}$. 
The claim follows.  

Define rational numbers $a_{ij}$ for $i\notin \sigma, j\in \sigma$ by
$\bar{\rho}_i = \sum_{j\in \sigma} a_{ij} \bar{\rho}_j$ for $1\le i\le
n$ and $\bar{s}_{i-n} = \sum_{j\in \sigma} a_{i j} \bar{\rho}_j $ for
$n+1\le i\le n+m$.  Then the equation \eqref{eq:lambda_i_relation}
shows that
\begin{equation}
\label{eq:lambdaj}
\lambda_j = - \sum_{i\notin \sigma} \lambda_i a_{ij}  
\end{equation}
for $\lambda \in \Lambda^S_\sigma$ and $j\in \sigma$.  Henceforth we
regard $\lambda_j$ for $j\in \sigma$ as a linear function of
$(\lambda_i : i\notin \sigma)$ via this relation.  We introduce
variables $(q_i: i\notin \sigma)$ dual to $(\lambda_i : i \notin
\sigma)$ and consider the change of variables:
\[
\tilde{Q}^{\lambda} = \prod_{i\notin \sigma} q_i^{\lambda_i} 
\]
We also have 
\[
v^S(\lambda) = \sum_{j \in \sigma} \lceil
\lambda_j \rceil \rho_j + \sum_{i \notin \sigma, i\le n}
\lambda_i \rho_i + \sum_{i=1}^m \lambda_{n+i}  s_i 
\equiv \sum_{i\notin \sigma}\lambda_i b^i \mod N_\sigma 
\]
where 
\[
b^i = \begin{cases} 
\text{the image of }\rho_i  \text{ in $N(\sigma) \cong \Box(\sigma)$}
& 1\le i\le n \\ 
\text{the image of }s_{i-n} \text{ in $N(\sigma) \cong \Box(\sigma)$} 
& n+1 \le i\le n+m  
\end{cases} 
\]
Now it suffices to show that
\begin{equation} 
\label{eq:restriction_of_I} 
-z \sum_{(\lambda_i : i\notin \sigma) \in (\ZZ_{\ge 0})^{\ell}}
 \left( \prod_{i\notin \sigma}
\frac{q_i^{\lambda_i}}{\lambda_i! (-z)^{\lambda_i}} \right) 
\left( 
\prod_{j \in \sigma} 
\frac{\prod_{\< a \>=\langle \lambda_j \rangle, a\leq 0}(u_j(\sigma)- a z)}
{\prod_{\<a\>=\langle \lambda_j\rangle, a\leq \lambda_j}(u_j(\sigma)- a z)}
\right) 1_{\sum_{i\notin \sigma} \lambda_i b^i} 
\end{equation} 
is a $S_\TT[\![q]\!]$-valued point on 
$\sL_\sigma^\tw$, where $\ell = n+ m -\dim \X(\bSigma)$. 

Jarvis--Kimura \cite{jk} calculated the Gromov--Witten 
theory of $BG$ with $G$ a finite group, and it follows from 
their result that the $J$-function of $BN(\sigma) \cong \X(\bSigma)_\sigma$ 
is: 
\begin{align}
\label{eq:J_BN} 
J_{BN(\sigma)} \left({\textstyle\sum_{i\notin \sigma}} q_i 1_{b^i},-z\right) 
= - z \sum_{(\lambda_i : i\notin \sigma) \in (\ZZ_{\ge 0})^{\ell }}
\left (
\prod_{i\notin \sigma }\frac{  q_i^{\lambda_i} }
{\lambda_i! (-z)^{\lambda_i}}  \right) 
1_{\sum_{i\notin \sigma} \lambda_i b^i}
\end{align} 
(See \cite[Proposition 6.1]{ccit}.)\phantom{.}  Comparing this with
\eqref{eq:restriction_of_I}, we find that the expression
\eqref{eq:restriction_of_I} is the \emph{hypergeometric modification}
of $J_{BN(\sigma)}$, in the sense of \cite{cg,ccit}.  The $J$-function
(\ref{eq:J_BN}) lies on the Lagrangian cone of the Gromov--Witten
theory of $BN(\sigma)$ (see Remark \ref{rmk:jfunction}), and we now
use the argument of \cite{ccit} to show that the hypergeometric
modification of the $J$-function \eqref{eq:restriction_of_I} lies on
the cone $\sL_\sigma^\tw$ of the twisted theory.

We briefly recall the setting from \cite{ccit}. 
Let $F$ be the direct sum $\bigoplus_{j=1}^d F^{(j)}$ of 
$d$ vector bundles and 
consider a universal multiplicative characteristic class: 
\[
\bc(F) = \prod_{j=1}^d \exp\left(\sum_{k=0}^\infty s_k^{(j)} \ch_k(F^{(j)})\right) 
\]
where $s_0^{(j)},s_1^{(j)}, s_2^{(j)}, \dots$ are formal
indeterminates.  As in Section \ref{sec:twisted}, one can define
$(F,\bc)$-twisted Gromov--Witten invariants and a Lagrangian cone for
the $(F,\bc)$-twisted theory.  The Lagrangian cone here is defined
over certain formal power series ring $\Lambda_{\rm nov}[\![\bs]\!]$
in infinitely many variables $s_k^{(j)}$, $0 \leq k < \infty$, $1\le j\le d$.  We
apply this setting to the case where $F=T_\sigma \X(\bSigma)$, which
is the direct sum of line bundles $u_j|_\sigma$, $j\in \sigma$ over
$\X(\bSigma)_\sigma$.  Denote by $\sL^\bs$ the Lagrangian cone of the
$(T_\sigma \X(\bSigma), \bc)$-twisted theory of the $\TT$-fixed point
$\X(\bSigma)_\sigma$.  By specializing the parameters $s^{(j)}_k$,
$j\in \sigma$, as
\[
s^{(j)}_k = \begin{cases} 
- \log u_j(\sigma) & k=0 \\ 
(-1)^k (k-1)! u_j(\sigma)^{-k} & k\ge 1 
\end{cases}
\]
we recover the $(T_\sigma \X(\bSigma), \eT^{-1})$-twisted theory of
$\X(\bSigma)_\sigma$. This specialization ensures that
\begin{align*}
  \bs^{(j)}(x) := \exp\left(\sum_{k=0}^\infty s_k^{(j)} \frac{x^k}{k!} 
  \right) 
  && \text{coincides with} && (u_j(\sigma)+ x )^{-1}.
\end{align*}
It now suffices to show the following: 
\begin{lem} 
Let:
\[
I_\bs(q) = \sum_{(\lambda_i: i\notin\sigma)\in (\ZZ_{\ge 0})^\ell } 
\left (
\prod_{i\notin \sigma }\frac{  q_i^{\lambda_i} }
{\lambda_i! (-z)^{\lambda_i}}  \right) 
\left( 
\prod_{j\in \sigma} 
\frac{\prod_{\langle a\rangle = \langle \lambda_j\rangle, 
a\le 0} \exp(-\bs^{(j)}(-az))}
{\prod_{\langle a\rangle = \langle \lambda_j\rangle, 
a\le \lambda_j} \exp(-\bs^{(j)}(-az))}
\right) 
1_{\sum_{i\notin \sigma} \lambda_i b^i}
\] 
where $\lambda_j$ with $j\in \sigma$ is a linear function of
$(\lambda_i : i\notin \sigma)$ via \eqref{eq:lambdaj}. Then $I_\bs(q)$
defines a $\CC[\![\bs]\!][\![q]\!]$-valued point on $\sL^\bs$.
\end{lem}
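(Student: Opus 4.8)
The plan is to recognize $I_\bs(q)$ as the \emph{hypergeometric modification}, in the sense of \cite{cg,ccit}, of the Jarvis--Kimura $J$-function of $BN(\sigma)\cong\X(\bSigma)_\sigma$, and then to invoke the quantum Riemann--Roch theorem of \cite{ccit} (following Givental and Tseng), which asserts that such a modification of a point on the untwisted Lagrangian cone $\sL_{BN(\sigma)}$ lies on the $(T_\sigma\X(\bSigma),\bc)$-twisted cone $\sL^\bs$. The task then splits into describing the modification operator abstractly and carrying out the modification on $J_{BN(\sigma)}$ by hand so as to see that the result is precisely $I_\bs(q)$.

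Concretely I would proceed in four steps. First, record that $J_{BN(\sigma)}\bigl(\sum_{i\notin\sigma}q_i 1_{b^i},-z\bigr)$, which is the explicit series \eqref{eq:J_BN}, is a $\CC[\![q]\!]$-valued point of $\sL_{BN(\sigma)}$; this is Remark~\ref{rmk:jfunction} together with the Jarvis--Kimura computation \cite{jk} quoted as \cite[Proposition~6.1]{ccit}. Second, specialize the quantum Riemann--Roch machinery of \cite{ccit} to the direct sum of line bundles $T_\sigma\X(\bSigma)=\bigoplus_{j\in\sigma}u_j|_\sigma$ over $BN(\sigma)$ and to the universal invertible multiplicative class $\bc$: this identifies the modification operator, built degree-by-degree from the Chern characters of the twisting bundles $R\pi_\star\ev^\star\bigl(u_j|_\sigma\bigr)$ over the moduli stacks $\Mbar_{0,n}(BN(\sigma))$ and from the series $\bs^{(j)}$, and the theorem then guarantees that the corresponding hypergeometric modification of $J_{BN(\sigma)}(-z)$ is a $\CC[\![\bs]\!][\![q]\!]$-valued point of $\sL^\bs$. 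Third---and this is the crux---I would compute that modification explicitly and identify it with $I_\bs(q)$: the $\lambda$-term of $J_{BN(\sigma)}$ is supported on the twisted sector $v^S(\lambda)=\sum_{i\notin\sigma}\lambda_i b^i\bmod N_\sigma$, on which $u_j|_\sigma$ has age determined by $\lambda_j=-\sum_{i\notin\sigma}\lambda_i a_{ij}$ as in \eqref{eq:lambdaj}; computing $H^0$ and $H^1$ of $u_j|_\sigma$ along the relevant constant orbifold stable maps to $BN(\sigma)$ by orbifold Riemann--Roch, and feeding the resulting Chern characters into $\bc$, produces exactly the ratio of products over $\{a\le 0\}$ and $\{a\le\lambda_j\}$ appearing in the definition of $I_\bs(q)$. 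Fourth, conclude that $I_\bs(q)\in\sL^\bs$; specializing the $s_k^{(j)}$ to the prescribed values then yields Proposition~\ref{prop:restriction_for_I}, the $t$-dependence having already been removed via the String Equation in the text.

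I expect the main obstacle to be the third step: matching the abstract quantum Riemann--Roch modification factor with the explicit combinatorial product in $I_\bs(q)$. This requires (i) the description of $\Mbar_{0,n}(BN(\sigma))$ via $N(\sigma)$-covers of marked rational curves, so that $R\pi_\star\ev^\star(u_j|_\sigma)$ becomes a purely character-theoretic object; (ii) the orbifold Riemann--Roch bookkeeping that converts the age at the ``output'' marking and the accumulated integer twists $\lfloor\lambda_j\rfloor$ into the asymmetric ranges $a\le 0$ versus $a\le\lambda_j$; and (iii) care over the ring defining $\sL^\bs$ (infinitely many variables $s_k^{(j)}$), the $q$-adic convergence of the modified series, and the admissibility of the final specialization recovering the $\eT^{-1}$-twisted theory. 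All three are technical rather than conceptual and are handled in \cite{cg,ccit} in the generality required here.
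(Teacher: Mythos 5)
Your overall strategy---identify $I_\bs(q)$ as a hypergeometric modification of the Jarvis--Kimura $J$-function of $BN(\sigma)$ and invoke the twisting machinery of \cite{cg,ccit}---is the strategy the paper uses. But the step you flag as the crux is set up in a way that would not work, and your plan skips the one step that carries the logical weight. Tseng's quantum Riemann--Roch theorem is the statement that a \emph{linear} symplectomorphism $\Delta_\bs$ (a multiplication operator built from the functions $G_y^{(j)}$) satisfies $\Delta_\bs\,\sL^{\untw}=\sL^\bs$. By itself it says nothing about hypergeometric modifications of individual points: $I_\bs(q)$ is not $\Delta_\bs J_{BN(\sigma)}$. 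The missing step is to first produce a \emph{different} point of the untwisted cone, namely
$\fb = e^{-\sum_{j\in\sigma}G_0^{(j)}(z\theta_j,z)}\,J_{BN(\sigma)}\bigl(\sum_{i\notin\sigma}q_i 1_{b^i},-z\bigr)$
with $\theta_j=\sum_{i\notin\sigma}a_{ij}q_i\,\partial/\partial q_i$; the fact that this differential operator preserves $\sL^{\untw}$ rests on the overruled-cone property of Theorem~\ref{thm:lag_cone} (this is precisely where that theorem is used). Only then does one apply $\Delta_\bs$ and verify, using the functional equations $G_y^{(j)}(x,z)=G_0^{(j)}(x+yz,z)$ and $G_0^{(j)}(x+z,z)=G_0^{(j)}(x,z)+\bs^{(j)}(x)$, that $\Delta_\bs\fb$ is exactly $I_\bs(q)$.

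Relatedly, the geometric computation you propose for the matching---computing $H^0$ and $H^1$ of $u_j|_\sigma$ along orbifold stable maps to $BN(\sigma)$ by orbifold Riemann--Roch---cannot produce the asymmetric ranges $a\le 0$ versus $a\le\lambda_j$. Every stable map to $BN(\sigma)$ has degree zero, so the moduli geometry of $BN(\sigma)$ does not see the number $\lambda_j=-\sum_{i\notin\sigma}\lambda_i a_{ij}$; that number records degrees in the ambient toric stack and enters the argument only through the vector fields $\theta_j$ acting on the $q$-variables. The identification of the modification with the explicit product in $I_\bs(q)$ is therefore a formal identity in the $G$-functions, not an orbifold Riemann--Roch computation on $\Mbar_{0,n}(BN(\sigma))$. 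The remaining points you raise (the ring $\CC[\![\bs]\!]$ in infinitely many variables, and the admissibility of the specialization of the $s_k^{(j)}$ recovering the $\eT^{-1}$-twisted theory) are handled as you describe.
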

\begin{proof} 
Introduce the function:  
\[
G_y^{(j)}(x, z):=\sum_{l,m\geq 0}
s_{l+m-1}^{(j)}\frac{B_m(y)}{m!}\frac{x^l}{l!} z^{m-1}\in 
\CC[y,x, z, z^{-1}][\![s_0^{(j)}, s_1^{(j)},s_2^{(j)},\ldots]\!]
\]
as in \cite{ccit}. We have:
\begin{align}
\label{eq:property_G} 
\begin{split} 
G_y^{(j)}(x,z)& =G_0^{(j)}(x+yz, z) \\
G_0^{(j)}(x+z,z)&=G_0^{(j)}(x,z) +\bs^{(j)}(x).
\end{split} 
\end{align}
We apply the differential operator $\exp\big({-\sum_{j\in \sigma}}
G_0^{(j)}(z \theta_j, z)\big)$ with 
$\theta_j = \sum_{i\notin \sigma} a_{ij} q_i (\partial/\partial q_i)$ 
to the $J$-function \eqref{eq:J_BN} of $BN(\sigma)$ and obtain: 
\begin{multline*} 
\fb := e^{{-\sum_{j\in \sigma}}G_0^{(j)}(z \theta_j,z)} 
J_{BN(\sigma)}
\left(
{\textstyle\sum_{i\notin \sigma} q_i 1_{b^i}}, -z 
\right) \\
= 
- z \sum_{(\lambda_i : i\notin \sigma) \in (\ZZ_{\ge 0})^{\ell }}
\left (
\prod_{i\notin \sigma }\frac{  q_i^{\lambda_i} }
{\lambda_i! (-z)^{\lambda_i}}  \right) 
\exp\left (- \sum_{j\in \sigma} G_0^{(j)}(-z \lambda_j, z)\right) 
1_{\sum_{i\notin \sigma} \lambda_i b^i}
\end{multline*} 
where we used \eqref{eq:lambdaj}.  The argument in the paragraph after
\cite[equation 14]{ccit} shows that $\fb$ lies on the Lagrangian cone
$\sL^{\untw}$ of the untwisted theory of $BN(\sigma)$.  (This is where
we use Theorem~\ref{thm:lag_cone}.)\phantom{.}  On the other hand,
Tseng's quantum Riemann-Roch operator for $\bigoplus_{j\in \sigma}
u_j|_\sigma$ is:
\[
\Delta_\bs = \bigoplus_{b\in \Box(\sigma)} 
\exp\left( \sum_{j\in \sigma}G_{b_j}^{(j)}(0,z) \right) 
\]
This operator maps the untwisted cone $\sL^{\untw}$ 
to the twisted cone $\sL^\bs$ \cite{ts}. 
Therefore 
\[
\Delta_\bs \fb = 
- z \sum_{(\lambda_i : i\notin \sigma) \in (\ZZ_{\ge 0})^{\ell }}
\left (
\prod_{i\notin \sigma }\frac{  q_i^{\lambda_i} }
{\lambda_i! (-z)^{\lambda_i}}  \right) 
\exp\left (\sum_{j\in \sigma} \left(
G_{\langle -\lambda_j \rangle}^{(j)}(0,z) - G_0^{(j)}(-z \lambda_j, z) \right) \right) 
1_{\sum_{i\notin \sigma} \lambda_i b^i}
\]
lies on $\sL^\bs$. Here we used the fact that $b_j = \langle
-\lambda_j\rangle$ for the box element $b = \sum_{i\notin \sigma}
\lambda_i b^i$.  After a straightforward calculation using the
properties \eqref{eq:property_G}, the Lemma follows.
\end{proof} 

This completes the proof of Proposition~\ref{prop:restriction_for_I},
and thus completes the proof of our mirror theorem.

\bibliographystyle{amsalpha}
\bibliography{bibliography}

\end{document}